\renewcommand{\theenumi}{\roman{enumi}}
\newcommand{\nullform}{\mathfrak B_{\theta_{12}}}
\newcommand{\truenullform}{\mathfrak B_{\theta_{12} \ll 1}}
\newcommand{\nullformhalf}{\mathfrak B_{\sqrt{\theta_{12}}}}
\newcommand{\Nmin}{N_{\mathrm{min}}}
\newcommand{\Nmed}{N_{\mathrm{med}}}
\newcommand{\Nmax}{N_{\mathrm{max}}}
\newcommand{\Lmin}{L_{\mathrm{min}}}
\newcommand{\Lmed}{L_{\mathrm{med}}}
\newcommand{\Lmax}{L_{\mathrm{max}}}
\newcommand{\hypwt}{\mathfrak h}
\newcommand{\Abs}[1]{\left\vert #1 \right\vert}
\newcommand{\abs}[1]{| #1 |}
\newcommand{\bigabs}[1]{\bigl\vert #1 \bigr\vert}
\newcommand{\norm}[1]{\left\Vert #1 \right\Vert}
\newcommand{\bignorm}[1]{\bigl\Vert #1 \bigr\Vert}
\newcommand{\Bignorm}[1]{\Bigl\Vert #1 \Bigr\Vert}
\newcommand{\N}{\mathbb{N}}
\newcommand{\Proj}{\mathbf{P}}
\newcommand{\R}{\mathbb{R}}
\newcommand{\Z}{\mathbb{Z}}
\DeclareMathOperator{\supp}{supp}
\newtheorem{theorem}{Theorem}[section]
\newtheorem{lemma}{Lemma}[section]
\newtheorem{corollary}{Corollary}[section]
\theoremstyle{definition}
\newtheorem{definition}{Definition}[section]
\newtheorem{example}{Example}[section]
\theoremstyle{remark}
\newtheorem{remark}{Remark}[section]
\title[Anisotropic bilinear estimates]{Anisotropic bilinear $L^2$ estimates related to the 3d wave equation}
\author{Sigmund Selberg}
\address{Department of Mathematical Sciences\\ Norwegian University of Science and Technology\\ Alfred Getz' vei 1\\ N-7491 Trondheim\\ Norway}
\email{sselberg@math.ntnu.no}
\urladdr{www.math.ntnu.no/~sselberg}
\thanks{Partially supported by Research Council of Norway, grant no.\ 160192/V30}
\thanks{The author wishes to thank Sergiu Klainerman for his hospitality and encouragement.}
\subjclass[2000]{35L05}
\begin{document}

\begin{abstract}
We first review the $L^2$ bilinear generalizations of the $L^4$ estimate of Strichartz for solutions of the homogeneous 3D wave equation, and give a short proof based solely on an estimate for the volume of intersection of two thickened spheres. We then go on to prove a number of new results, the main theme being how additional, anisotropic Fourier restrictions influence the estimates. Moreover, we prove some refinements which are able to simultaneously detect both concentrations and nonconcentrations in Fourier space.
\end{abstract}

\maketitle

\tableofcontents


\section{Introduction}\label{A}

In this paper we are interested in bilinear $L^2$ Fourier restriction estimates related to solutions of the homogeneous 3D wave equation
\begin{equation}\label{A:2}
  \square u = 0 \qquad \left(t \in \R, \;x \in \R^3; \; \square = -\partial_t^2 + \Delta\right),
\end{equation}
where $\Delta$ is the Laplacian on $\R^3_x$.

In general, by a \emph{bilinear $L^2$ Fourier restriction estimate on $\R^n$}, for a given $n \ge 1$, we mean here an estimate of the form
\begin{equation}\label{A:10}
  \norm{\Proj_{A_0}(\Proj_{A_1} f_1 \cdot \Proj_{A_2} f_2)}
  \le C_{A_0,A_1,A_2} \norm{\Proj_{A_1} f_1} \norm{\Proj_{A_2} f_2}
  \qquad (\forall f_1,f_2 \in \mathcal S(\R^n)),
\end{equation}
for given measurable sets $A_0,A_1,A_2 \subset \R^n$. Here $\mathcal S(\R^n)$ is the Schwartz space, $\norm{\cdot}$ denotes the norm on $L^2(\R^n)$, and $\Proj_{A}$, for any measurable set $A \subset \R^n$, denotes the multiplier whose symbol is the characteristic function $\chi_{A}$. That is,
\begin{equation}\label{A:11}
  \widehat{\Proj_A f} = \chi_A \widehat f,
\end{equation}
where
$$
  \widehat f(\xi) = \mathcal F f(\xi) = \int_{\R^n} e^{-i x \cdot \xi} f(x) \, dx
$$
is the Fourier transform. Of course, \eqref{A:10} is only interesting when the $A_j$ have nonzero $n$-dimensional volume, but restriction to hypersurfaces can be treated by thickening them slightly, as examples given below demonstrate.

If the set $A$ in \eqref{A:11} is given by some condition, we occasionally just replace the subscript $A$ in \eqref{A:11} by that condition, to avoid having to give a name to the set.

In our applications, $n=3$ or $1+3$, and in the latter case we split the Fourier variable into $(\tau,\xi)$, where $\tau \in \R$, $\xi \in \R^3$ are the Fourier variables of $t,x$ respectively. We shall call $\xi$ the spatial frequency. The characteristic set of \eqref{A:2} is the null cone $\abs{\tau}=\abs{\xi}$. We recall, however, that solutions of \eqref{A:2} split naturally into $u=u_+ + u_-$, where the $u_\pm$ satisfy
\begin{equation}\label{A:4}
  \left( i \partial_t \pm \abs{D} \right) u_\pm = 0,
\end{equation}
$\abs{D}$ being the multiplier with symbol $\abs{\xi}$; the corresponding characteristic sets are the null cone components
$$
  K^\pm = \left\{ (\tau,\xi) \in \R^{1+3} \colon \tau=\pm\abs{\xi} \right\}.
$$
For $L > 0$ we define also the thickened cones
$$
  K^\pm_L = \left\{ (\tau,\xi) \in \R^{1+3} \colon \bigabs{-\tau\pm\abs{\xi}} \le L \right\},
$$
which arise if we consider estimates of the form \eqref{A:10} related to solutions of \eqref{A:2}. Such estimates often depend on the size of the spatial frequency; to describe such restrictions we introduce notation for balls and annuli in $\R^3$:
$$
  B_N = \left\{ \xi \in \R^3 \colon \abs{\xi} \le N \right\},
  \qquad
  \Delta B_N = B_{N} \setminus B_{\frac{N}{2}} = \left\{ \xi \in \R^3 \colon \frac{N}{2} < \abs{\xi} \le N \right\},
$$
where $N > 0$. Later we introduce various anisotropic restrictions on the spatial frequency, which can also affect the estimates.

We now give some examples of well-known Fourier restriction theorems which fit into the form \eqref{A:10} (details are given below):

\begin{enumerate}
\item\label{A:20} The Stein-Tomas restriction theorem for the sphere $\mathbb S^2 \subset \R^3$. See \cite{Stein:1993}.
\item\label{A:22} Strichartz's $L^4$ estimate for the homogeneous 3d wave equation. See \cite{Strichartz:1977}.
\item\label{A:24} $L^2$ bilinear generalizations of Strichartz's estimate. See \cite{Klainerman:1993, Klainerman:1996,Foschi:2000}.
\end{enumerate}

Our main interest is in reviewing and extending \eqref{A:24}, but \eqref{A:20} and \eqref{A:22} provide some motivation for our point of view. In section \ref{B} we briefly review a general method for proving estimates of the form \eqref{A:10} (see \cite{Tao:2001} for a more wide-ranging discussion of this theme), and in section \ref{C} we implement this to give short, unified proofs of \eqref{A:20}--\eqref{A:24}, based solely on an elementary volume estimate for the intersection of two thickened spheres (see Lemma \ref{C:Lemma1} below).

Our main goal, however, is to prove a number of new results, presented in section \ref{L:0}, which turn out to be important in the study of regularity for the Maxwell-Dirac system; see \cite{Selberg:2008b}. The main theme is how additional, anisotropic Fourier restrictions influence the estimates. Moreover, we prove some refinements which are able to simultaneously detect both concentrations and nonconcentrations in Fourier space. To prove these estimates, we rely mostly on angular decompositions, orthogonality arguments, and volume estimates for intersections of sets with transversality in some direction. Our general approach here is very much in the spirit of the article \cite{Tao:2001}, which was one of the main sources of inspiration for the present work.

In estimates we use the shorthand $X \lesssim Y$ for $X \le CY$, where $C \gg 1$ is some absolute constant; $X=O(R)$ is short for $\abs{X} \lesssim R$; $X \sim Y$ means $X \lesssim Y \lesssim X$; $X \ll Y$ stands for $X \le C^{-1} Y$, with $C$ as above. We write $\simeq$ for equality up to multiplication by an absolute constant (typically factors involving $2\pi$). 

Let us first show how \eqref{A:20}--\eqref{A:24} fit into the framework \eqref{A:10}.

\begin{example}\label{A:30} The Stein-Tomas theorem for $\mathbb S^2 \subset \R^3$. The endpoint case of this theorem reads, in the dual formulation,
\begin{equation}\label{A:34}
  \norm{Tg}_{L^4(\R^3)} \lesssim \norm{g}_{L^2(\mathbb S^2,d\sigma)}
  \qquad (\forall g \in \mathcal S(\R^3)),
\end{equation}
where $d\sigma$ is surface measure on $\mathbb S^2$ and
$$
  \widehat{Tg}(\xi) = g(\xi) \, d\sigma(\xi) = g(\xi) \delta(1-\abs{\xi}).
$$
Approximating the point mass $\delta$ by $(1/2\varepsilon)\chi_{(-\varepsilon,\varepsilon)}$ with $\varepsilon > 0$ tending to zero, it is then easy to see that \eqref{A:34} follows from the estimate
\begin{equation}\label{A:36}
  \norm{\Proj_{S_\varepsilon} g}_{L^4(\R^3)} \lesssim \varepsilon^{1/2}
  \norm{\Proj_{S_\varepsilon} g}_{L^2(\R^3)},
  \qquad
  \text{where $S_\varepsilon = \left\{ \xi \in \R^3 \colon \bigabs{\abs{\xi} - 1} \le\varepsilon \right\}$}.
\end{equation}
Since \eqref{A:36} is an $L^4$ estimate, it can be restated as a bilinear $L^2$ estimate:
\begin{equation}\label{A:40}
  \norm{\Proj_{S_\varepsilon} g_1 \cdot \Proj_{S_\varepsilon} g_2}_{L^2(\R^3)} 
  \lesssim \varepsilon
  \norm{\Proj_{S_\varepsilon} g_1}_{L^2(\R^3)}
  \norm{\Proj_{S_\varepsilon} g_2}_{L^2(\R^3)},
\end{equation}
which is of the form \eqref{A:10}.
\end{example}

\begin{example}\label{A:50} Strichartz's $L^4$ estimate. In frequency-localized form, this says that for any $N > 0$,
\begin{equation}\label{A:52}
  \norm{T_\pm g}_{L^4(\R^{1+3})} \lesssim N^{1/2} \norm{g}_{L^2(\R^3)}
  \quad
  \left( \text{$\forall g \in \mathcal S(\R^3)$ s.t.\ $\supp g \subset \Delta B_N$}\right),
\end{equation}
where
$$
  \widehat{T_\pm g}(\tau,\xi) = g(\xi) \delta(-\tau\pm\abs{\xi}).
$$
Thus, $T_\pm g$ is the solution of \eqref{A:4} with data $g$ at $t=0$. Note that $\delta(-\tau\pm\abs{\xi})$ is, up to a constant factor, surface measure on the cone $K^\pm$. Approximating the point mass $\delta$ as above, one can show that \eqref{A:52} follows from the estimate, for any $N,L > 0$,
\begin{equation}\label{A:58}
  \Bignorm{\Proj_{(\R \times \Delta B_N) \cap K^\pm_L} u}_{L^4(\R^{1+3})} \lesssim (NL)^{1/2} 
  \Bignorm{\Proj_{(\R \times \Delta B_N) \cap K^\pm_L} u}_{L^2(\R^{1+3})}.
\end{equation}
\end{example}

\begin{example}\label{A:70} Bilinear generalizations of \eqref{A:52} were first investigated by Klainerman and Machedon \cite{Klainerman:1993, Klainerman:1996}. The following frequency-localized estimates were proved in \cite{Foschi:2000}, and also in \cite{Tataru:2001} and \cite{Tao:2001}; see \cite{Klainerman:2001} for a different approach to proving bilinear estimates. Given $N_0, N_1, N_2 > 0$, write
\begin{equation}\label{A:71}
  \Nmin^{12} = \min(N_1,N_2), \qquad \Nmin^{012} = \min(N_0,N_1,N_2).
\end{equation}
Let $\pm_1,\pm_2$ be arbitrary signs. Then for $g_1,g_2 \in \mathcal S(\R^3)$ with $\supp g_j \subset B_{N_j}$, $j=1,2$,
\begin{equation}\label{A:72}
  \norm{\Proj_{\R \times B_{N_0}} \left( T_{\pm_1} g_1
  \cdot T_{\pm_2} g_2 \right)}_{L^2(\R^{1+3})}
  \lesssim \left(\Nmin^{012}\Nmin^{12}\right)^{1/2}
  \norm{g_1}_{L^2(\R^3)} \norm{g_2}_{L^2(\R^3)}.
\end{equation}
By approximation, one can show that \eqref{A:72} follows from (in fact, is equivalent to)
\begin{multline}\label{A:76}
  \Bignorm{\Proj_{\R \times B_{N_0}} \Bigl( \Proj_{(\R \times B_{N_1}) \cap K^{\pm_1}_{L_1}} u_1
  \cdot \Proj_{(\R \times B_{N_2}) \cap K^{\pm_2}_{L_2}} u_2 \Bigr)}  \\
  \lesssim 
  \left( \Nmin^{012} \Nmin^{12} L_1 L_2 \right)^{1/2}
  \Bignorm{\Proj_{(\R \times B_{N_1}) \cap K^{\pm_1}_{L_1}} u_1}
  \Bignorm{\Proj_{(\R \times B_{N_2}) \cap K^{\pm_2}_{L_2}} u_2},
\end{multline}
where $L_1,L_2 > 0$ are arbitrary and $\norm{\cdot}$ denotes the norm on $L^2(\R^{1+3})$.
\end{example}

\begin{remark}\label{A:80} Due to the factor $(\Nmin^{012})^{1/2}$ on the right, it suffices to prove \eqref{A:76} with the balls $B_{N_j}$ replaced by the annuli $\Delta B_{N_j}$, for $j=0,1,2$; see section \ref{C}. From now on we shall generally use annuli instead of balls, and to simplify we write
\begin{equation}\label{A:77}
  K^{\pm}_{N,L}
  = \left( \R \times \Delta B_N \right)
  \cap K^{\pm}_{L}.
\end{equation}
\end{remark}

\begin{remark}\label{A:82} If $\pm_1=\pm_2$, then the factor $\Nmin^{12}$ inside the parentheses on the right hand side of \eqref{A:72} can be replaced by $\Nmin^{012}$; this fact is rarely useful in practice, however, so we shall ignore it.
\end{remark}

This concludes the discussion of how the restriction theorems \eqref{A:20}--\eqref{A:24} fit into the framework \eqref{A:10}. In section \ref{C} we prove these theorems in a unified manner, using only the following estimate for the volume of intersection of two thickened spheres. Introducing the notation
$$
  S_\delta(r) = \left\{ \xi \in \R^3 \colon r-\delta \le \abs{\xi} \le r+\delta \right\},
$$
we have:

\begin{lemma}\label{C:Lemma1}
Let $0 < \delta \ll r$ and $0 < \Delta \ll R$. Then for any $\xi_0 \in \R^3$,
$$
  \Abs{S_\delta(r) \cap ( \xi_0 + S_\Delta(R))}
  \lesssim \frac{rR\delta\Delta }{\abs{\xi_0}}.
$$
\end{lemma}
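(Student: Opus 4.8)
The plan is to reduce the three-dimensional volume computation to a one-dimensional integral by slicing $\R^3$ along the axis through the origin and $\xi_0$. Without loss of generality I may place $\xi_0 = (\rho,0,0)$ with $\rho = \abs{\xi_0}$, and write a point $\xi \in \R^3$ as $\xi = (s, y)$ with $s \in \R$ and $y \in \R^2$. The set $S_\delta(r)$ is the region between two concentric spheres of radii $r \mp \delta$, and similarly $\xi_0 + S_\Delta(R)$ is the analogous shell centred at $\xi_0$. For each fixed $s$, the slice of $S_\delta(r)$ is an annulus in the $y$-plane whose area is at most $O(r\delta)$ (since the radial width of the shell, measured in the $y$-variable, is comparable to $\delta$ away from the poles $s = \pm r$, and the full area of the shell slice is crudely bounded by that of a disk of radius $\sim r$ times the relative width — more carefully, the slice has area $\pi\bigl((r+\delta)^2 - s^2\bigr)_+ - \pi\bigl((r-\delta)^2 - s^2\bigr)_+ \le 4\pi r\delta$). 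The same bound with $R,\Delta$ in place of $r,\delta$ holds for the slices of the translated shell. Thus naive slicing gives a bound $O(r\delta) \cdot (\text{length of } s\text{-interval where both slices are nonempty})$, but the crude length bound $\min(r,R)$ is far too weak; the improvement to $rR\delta\Delta/\rho$ must come from noticing that for \emph{most} values of $s$ the two annular slices, though individually large, are \emph{disjoint}.

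The key geometric observation is this: the slice of $S_\delta(r)$ at height $s$ is an annulus in the $y$-plane centred at the origin with inner/outer radii roughly $\sqrt{r^2 - s^2} \mp \delta'$ where $\delta' \sim r\delta/\sqrt{r^2-s^2}$ is the width; the slice of $\xi_0 + S_\Delta(R)$ at the same height is an annulus centred at the origin (since $\xi_0$ lies on the $s$-axis, its translate keeps the $y$-centre at $0$) with radii roughly $\sqrt{R^2 - (s-\rho)^2} \mp \Delta'$, $\Delta' \sim R\Delta/\sqrt{R^2 - (s-\rho)^2}$. Two concentric annuli in the plane intersect in a set of area at most $2\pi \cdot (\text{outer radius}) \cdot \min(\text{width}_1, \text{width}_2)$, but more importantly their intersection is \emph{nonempty only when} the radial intervals $\bigl[\sqrt{r^2-s^2} - \delta', \sqrt{r^2-s^2}+\delta'\bigr]$ and $\bigl[\sqrt{R^2-(s-\rho)^2}-\Delta', \sqrt{R^2-(s-\rho)^2}+\Delta'\bigr]$ overlap. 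Setting $\phi(s) = \sqrt{r^2 - s^2} - \sqrt{R^2 - (s-\rho)^2}$, the overlap condition forces $\abs{\phi(s)} \lesssim \delta' + \Delta'$. The crucial point is that $\phi$ has derivative $\phi'(s) = -s/\sqrt{r^2-s^2} + (s-\rho)/\sqrt{R^2-(s-\rho)^2}$, and one checks that away from the poles this is bounded below in absolute value by a quantity comparable to $\rho / (\text{something} \lesssim \max(r,R))$; roughly, the two square-root terms are the cosines of the angles the respective sphere-radii make with the $s$-axis, and these cannot be too close because the sphere centres are separated by $\rho$. Hence the set of admissible $s$ has measure $\lesssim (\delta' + \Delta') / \abs{\phi'(s)}$, and integrating the slice-area bound $O(\text{outer radius} \cdot \min(\delta',\Delta'))$ against $ds$ over this set produces the claimed $rR\delta\Delta/\rho$.

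Concretely, the steps I would carry out are: (1) normalize $\xi_0$ onto the first axis and set up the slicing coordinates $(s,y)$; (2) for each $s$ in the (short) range where both shells meet, identify the two slices as concentric planar annuli and record their radii, widths, and the exact nonemptiness condition $\abs{\phi(s)} \le \delta' + \Delta'$; (3) bound the area of the intersection of two concentric annuli by $O\bigl(\max(r,R)\bigr) \cdot \min(\delta', \Delta')$ — or even more simply by the area of the thinner one, which is itself $\lesssim \min(r\delta, R\Delta)$; (4) establish the lower bound $\abs{\phi'(s)} \gtrsim \rho / \max(r,R)$ on the relevant $s$-range via the cosine interpretation and the triangle inequality for the sphere centres, which controls the measure of the admissible set by $\lesssim (\delta'+\Delta') \max(r,R)/\rho$; (5) multiply and simplify, using $\delta' \lesssim r\delta / (\text{slice radius})$ etc., to land on $rR\delta\Delta/\rho$, absorbing the various $\max$'s and the comparabilities $r \sim$ slice radius (valid away from the poles, with the pole caps handled separately as a lower-order contribution because the shells there are thin in a favourable direction).

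The main obstacle I anticipate is the bookkeeping near the ``poles'' — the values $s = \pm r$ for the first shell and $s = \rho \pm R$ for the second — where the slice radii degenerate to zero, the widths $\delta', \Delta'$ blow up, and both the slice-area bound and the $\abs{\phi'}$ lower bound threaten to fail. Near a pole the shell is a thin spherical cap of thickness $\sim \delta$ in the $s$-direction and transverse extent $\sim \sqrt{r\delta}$, so that cap has volume only $\sim r\delta^2 \cdot$(something), and one argues that its contribution to the intersection is dominated by the $rR\delta\Delta/\rho$ bound using $\delta \ll r$ and $\Delta \ll R$; making this rigorous — deciding exactly how large a neighbourhood of each pole to excise and checking the excised pieces separately — is the fiddly part. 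A cleaner alternative that sidesteps some of this is to bound the area of the intersection of the two concentric planar annuli directly by $\min\bigl(\text{area of annulus 1}, \text{area of annulus 2}\bigr) \le \min(4\pi r\delta, 4\pi R\Delta)$, use this uniform-in-$s$ bound, and then only invoke the $\abs{\phi'(s)} \gtrsim \rho/\max(r,R)$ transversality to bound the length of the admissible $s$-set by $\lesssim \max(r\delta/\rho, R\Delta/\rho)$ — wait, that still needs the widths; so in the end one does need the pole analysis, but it only has to be done to the crude accuracy of an order-of-magnitude bound, which keeps it manageable.
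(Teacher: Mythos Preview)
Your slicing setup is exactly the paper's: align $\xi_0$ with the first axis and cut by hyperplanes $\xi^1 = s$, so that each slice of the intersection is a pair of \emph{concentric} planar annuli. Where you diverge is in how you bound the set of admissible $s$. You propose to track the annular radii $\sqrt{r^2-s^2}$ and $\sqrt{R^2-(s-\rho)^2}$, study their difference $\phi(s)$, bound $\abs{\phi'(s)}$ from below, and then patch up the pole regions separately. That can be made to work, but it is doing calculus where algebra suffices.

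The paper's key observation is that one should compare the \emph{squared} radii rather than the radii. Membership in the two shells reads
\[
  (r-\delta)^2 \le (\xi^1)^2 + \abs{\xi'}^2 \le (r+\delta)^2,
  \qquad
  (R-\Delta)^2 \le (\xi^1-\rho)^2 + \abs{\xi'}^2 \le (R+\Delta)^2,
\]
and subtracting eliminates $\abs{\xi'}^2$ entirely, leaving a \emph{linear} constraint $\xi^1 \in (a,b)$ with $b-a = 2(r\delta + R\Delta)/\rho$. No square roots, no derivative bounds, no pole analysis: the length of the admissible $s$-interval drops out in one line. This is precisely the transversality you were chasing via $\phi'(s)$, but obtained without any case splitting.

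After that, the only remaining task is to bound the volume of a single thickened sphere intersected with a slab $\{a < \xi^1 < b\}$; the paper records this as a separate lemma giving $\abs{S_\varepsilon(\rho) \cap \{a<\xi^1<b\}} \lesssim \rho\varepsilon(b-a)$, proved by the elementary annulus-area calculation you already wrote down (the slice area is at most $4\pi\rho\varepsilon$, uniformly in $s$, with a trivial adjustment near the pole cap). Assuming by symmetry $r\delta \le R\Delta$, applying this with $(\rho,\varepsilon)=(r,\delta)$ and $b-a \sim R\Delta/\abs{\xi_0}$ finishes the proof.

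So your plan is sound but overbuilt; the squared-radius subtraction is the one idea that collapses all of your steps (3)--(5) and the pole bookkeeping into two lines.
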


The proof is given in section \ref{G}.

Our new results are presented in the next section, but in preparation for this we introduce some more notation and terminology.

Throughout, $N_0,N_1,N_2,L_0,L_1,L_2 > 0$; $\pm_0,\pm_1,\pm_2$ denote arbitrary signs; we assume that
$u_1,u_2 \in L^2(\R^{1+3})$ satisfy
\begin{equation}\label{A:120}
  \supp \widehat{u_j} \subset K^{\pm_j}_{N_j,L_j} \qquad \text{for $j=1,2$},
\end{equation}
with notation as in \eqref{A:77}. Given $\gamma > 0$ and $\omega \in \mathbb S^2$, we also define $u_j^{\gamma,\omega}$ by
\begin{equation}\label{A:120:2}
  \supp \widehat{u_j^{\gamma,\omega}} \subset K^{\pm_j}_{N_j,L_j,\gamma,\omega} \qquad \text{for $j=1,2$},
\end{equation}
where
\begin{equation}\label{D:42}
  K^{\pm}_{N,L,\gamma,\omega}
  = \left\{ (\tau,\xi) \in K^{\pm}_{N,L} \colon \theta(\pm\xi,\omega) \le \gamma \right\},
\end{equation}
and $\theta(a,b)$ denotes the angle between nonzero $a,b \in \R^3$. Except in section \ref{C}, $\norm{\cdot}$ denotes the norm on $L^2(\R^{1+3})$. The shorthand \eqref{A:71} is used for both $N$'s and $L$'s, and analogous notation is used for maxima. In the case of a three-index such as $012$, $\Nmed^{012}$ denotes the median. 

For later use we note the following restatement of \eqref{A:76} in a more symmetric form, permitting the use of duality (that is, permutation).

\begin{theorem}\label{A:Thm1} For all $u_1,u_2 \in L^2(\R^{1+3})$ satisfying \eqref{A:120}, the estimate
\begin{equation}\label{A:100}
  \Bignorm{ \Proj_{K^{\pm_0}_{N_0,L_0}}
  ( u_1 u_2 ) }
  \le C
  \norm{u_1}
  \norm{u_2}
\end{equation}
holds with
\begin{align}
\label{A:110}
  C^2
  &\sim \Nmin^{012}\Nmin^{12} L_1 L_2,
  \\
  \label{A:112}
  C^2
  &\sim \Nmin^{012}\Nmin^{0j} L_0 L_j \qquad (j=1,2),
  \\
  \label{A:114}
  C^2
  &\sim N_0\Nmin^{12} \Lmin^{012} \Lmed^{012},
\end{align}
for any choice of signs $(\pm_0,\pm_1,\pm_2)$.
\end{theorem}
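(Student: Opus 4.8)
The plan is to reduce Theorem~\ref{A:Thm1} to the estimate \eqref{A:76} (equivalently \eqref{A:72}) by exploiting the symmetry of the bilinear form under permutation of the three indices $0,1,2$. The first step is to recast \eqref{A:100} in dual form. Writing $u_0$ for a test function with $\supp\widehat{u_0}\subset K^{\pm_0}_{N_0,L_0}$, the estimate \eqref{A:100} with constant $C$ is equivalent, by duality and Plancherel, to the trilinear bound
\begin{equation*}
  \Abs{\int u_0 u_1 u_2 \,\ast\text{-type convolution integral}} \le C \norm{u_0}\norm{u_1}\norm{u_2},
\end{equation*}
or more precisely to the symmetric statement that $\bigabs{\innerprod{\Proj_{K^{\pm_0}_{N_0,L_0}}(u_1u_2)}{u_0}} \le C\norm{u_0}\norm{u_1}\norm{u_2}$ whenever each $\widehat{u_j}$ is supported in $K^{\pm_j}_{N_j,L_j}$. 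Replacing $u_j$ by $\overline{u_j}$ where needed flips the sign $\pm_j$, and the conjugation does not affect $L^2$ norms; since the claimed constants in \eqref{A:110}--\eqref{A:114} are asserted ``for any choice of signs,'' this manipulation is harmless. The upshot is that the roles of the three indices $0,1,2$ are interchangeable, so it suffices to establish \emph{one} bound in each ``orbit'' under the $S_3$-action on $\{0,1,2\}$.

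With that reduction in hand, \eqref{A:110} is nothing but \eqref{A:76} (after passing from balls $B_{N_j}$ to annuli $\Delta B_{N_j}$ via Remark~\ref{A:80}, which is legitimate because the right-hand side carries the harmless factor $(\Nmin^{012})^{1/2}$ and because $K^{\pm}_{N,L}$ as defined in \eqref{A:77} already uses annuli). Then \eqref{A:112} follows from \eqref{A:110} by applying the permutation that swaps the output index $0$ with the input index $3-j$: under this swap $\Nmin^{012}$ is invariant, while the pair $\{N_1,N_2\}$ and $\{L_1,L_2\}$ in \eqref{A:110} becomes $\{N_0,N_j\}$ and $\{L_0,L_j\}$, giving exactly $\Nmin^{012}\Nmin^{0j}L_0L_j$. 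Finally \eqref{A:114} is obtained from \eqref{A:110} by taking the best of the three cyclic/transposition variants: \eqref{A:110} reads $C^2\sim\Nmin^{012}\Nmin^{12}L_1L_2$, and its two permuted cousins read $C^2\sim\Nmin^{012}\Nmin^{02}L_0L_2$ and $C^2\sim\Nmin^{012}\Nmin^{01}L_0L_1$. Multiplying all three, taking square roots, and using $\Nmin^{12}\Nmin^{02}\Nmin^{01}\le N_0(\Nmin^{12})^2$ together with $L_0L_1\cdot L_0L_2\cdot L_1L_2 = (L_0L_1L_2)^2 \ge (\Lmin^{012})^2(\Lmed^{012})^2$ being controlled the right way, one extracts $C^2 \lesssim N_0\Nmin^{12}\Lmin^{012}\Lmed^{012}$. (One must check the $\min$-bookkeeping carefully: the point is that among $L_0,L_1,L_2$, the product of the two smallest is at most the geometric-mean-type quantity appearing after combining the three permuted estimates.)

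The main obstacle I expect is the careful combinatorial bookkeeping in the derivation of \eqref{A:114}: one has to verify that discarding the largest of $L_0,L_1,L_2$ (equivalently, keeping $\Lmin\Lmed$) really does follow from multiplying the three permuted versions of \eqref{A:110} and that the $N$-factors collapse to $N_0\Nmin^{12}$ rather than something smaller. A cleaner route, which I would adopt if the multiplicative trick proves awkward, is to prove \eqref{A:114} \emph{directly} by the volume method of section~\ref{C}: expand $\norm{\Proj_{K^{\pm_0}_{N_0,L_0}}(u_1u_2)}^2$ via Plancherel into an integral over a convolution of the three indicator-weighted measures, bound it by a supremum of a volume of intersection of two thickened spheres (Lemma~\ref{C:Lemma1}) integrated against the remaining thickened-cone measure, and observe that the two smallest of $L_0,L_1,L_2$ are exactly the two ``radial thicknesses'' $\delta,\Delta$ that survive after one thickness is absorbed by the $d\tau$-integration, while $N_0$ and $\Nmin^{12}$ supply the radii $r,R$ in the statement of the lemma. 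This direct approach sidesteps the permutation algebra entirely and is the safer fallback; the duality reduction is merely the slicker presentation when it works.
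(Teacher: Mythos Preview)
Your treatment of \eqref{A:110} and \eqref{A:112} is correct and matches the paper exactly: \eqref{A:110} is \eqref{A:76} restated via Remark~\ref{A:80}, and \eqref{A:112} follows by the duality/permutation symmetry of the trilinear form, which is available precisely because the signs are arbitrary.

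The gap is in your derivation of \eqref{A:114}. The multiplicative (geometric-mean) trick cannot work: multiplying the three permuted bounds and taking cube roots gives an $L$-factor of $(L_0L_1L_2)^{2/3}$, and your own parenthetical observation that $\Lmin^{012}\Lmed^{012} \le (L_0L_1L_2)^{2/3}$ shows that this is \emph{larger} than the target, so the resulting upper bound for $C^2$ is weaker than \eqref{A:114}, not stronger. The inequality goes the wrong way.

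The paper's argument is to take the \emph{minimum} of the three bounds, not their geometric mean. Since $\min(L_1L_2,\,L_0L_1,\,L_0L_2)=\Lmin^{012}\Lmed^{012}$, the $L$-factor comes out immediately. What remains is to check that the accompanying $N$-prefactor, namely $\Nmin^{012}\Nmin^{jk}$ for whichever pair $(j,k)$ excludes $\Lmax^{012}$, is always $\lesssim N_0\Nmin^{12}$. This is where \eqref{A:140} enters: it forces the two largest of $N_0,N_1,N_2$ to be comparable, i.e.\ $\Nmed^{012}\sim\Nmax^{012}$, which in turn gives the identity $\Nmin^{012}\Nmax^{012}\sim N_0\Nmin^{12}$ that the paper records. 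Since $\Nmin^{jk}\le\Nmed^{012}\sim\Nmax^{012}$ for every pair $(j,k)$, we get $\Nmin^{012}\Nmin^{jk}\lesssim\Nmin^{012}\Nmax^{012}\sim N_0\Nmin^{12}$, and \eqref{A:114} follows. Your volume-method fallback would also work but is unnecessary; the missing ingredient is just the frequency constraint from \eqref{A:140}.
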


\begin{proof}
First, \eqref{A:110} follows from (in fact, is equivalent to, in view of Remark \ref{A:80}) the estimate \eqref{A:76}, and then \eqref{A:112} follows by permutation, i.e., by duality; see the general discussion in section \ref{B}; the duality argument works because the signs are arbitrary. From \eqref{A:140} below we see that the left hand side of \eqref{A:100} vanishes unless $\Nmin^{012}\Nmax^{012} \sim N_0\Nmin^{12}$. Therefore, combining \eqref{A:110} and \eqref{A:112}, we get \eqref{A:114}.
\end{proof}

Note the convolution formula
\begin{equation}\label{A:130}
  \widehat{u_1 u_2}(X_0)
  \simeq
  \int \widehat{u_1}(X_1)\widehat{u_2}(X_2)  
  \, \delta(X_0-X_1-X_2) \, dX_1 \, dX_2,
\end{equation}
where $X_j = (\tau_j,\xi_j) \in \R^{1+3}$, $j=0,1,2$. Thus, in \eqref{A:130},
\begin{equation}\label{A:138}
  X_0 = X_1 + X_2 \qquad (\iff \tau_0=\tau_1+\tau_2, \; \xi_0=\xi_1+\xi_2).
\end{equation}

\begin{definition}\label{A:Def}
A triple $(X_0,X_1,X_2)$ of vectors $X_j = (\tau_j,\xi_j) \in \R^{1+3}$ is said to form a \emph{bilinear interaction} if \eqref{A:138} holds.
\end{definition}

Since $\xi_0=\xi_1+\xi_2$ in a bilinear interaction, $\abs{\xi_j} \le \abs{\xi_k} + \abs{\xi_l}$ for all permutations $(j,k,l)$ of $(0,1,2)$, hence one of the following must hold:
\begin{subequations}\label{A:140}
\begin{alignat}{2}
  \label{A:140a}
  \abs{\xi_0} &\ll \abs{\xi_1} \sim \abs{\xi_2}& \qquad &(\text{``low output''}),
  \\
  \label{A:140b}
  \abs{\xi_0} &\sim \max(\abs{\xi_1},\abs{\xi_2})& \qquad &(\text{``high output''}).
\end{alignat}
\end{subequations}
The integration in \eqref{A:130} may be restricted to the region where $\xi_1,\xi_2 \neq 0$, hence
\begin{equation}\label{A:170}
  \theta_{12} \equiv \theta(\pm_1\xi_1,\pm_2\xi_2)
\end{equation}
is well-defined. Given signs $\pm_0,\pm_1,\pm_2$, we define also
\begin{equation}\label{N:12}
  \hypwt_j =  -\tau_j\pm_j\abs{\xi_j} \qquad (j=0,1,2),
\end{equation}
which we call \emph{hyperbolic weights}, whereas the $\abs{\xi_j}$ are called \emph{elliptic weights}.

For nonzero $a,b \in \R^3$, $\theta(a,b)$ denotes the angle between $a,b$. We note that
\begin{align}
  \label{I:2}
  \abs{a}+\abs{b}-\abs{a+b} &\sim \min(\abs{a},\abs{b}) \theta(a,b)^2,
  \\
  \label{I:4}
  \abs{a-b}-\bigabs{\abs{a}-\abs{b}} &\sim \frac{\abs{a}\abs{b}}{\abs{a-b}} \theta(a,b)^2 \qquad(a \neq b),
\end{align}
due to the identities
\begin{align*}
  \abs{a}+\abs{b}-\abs{a+b}
  &= \frac{(\abs{a}+\abs{b})^2-\abs{a+b}^2}{\abs{a}+\abs{b}+\abs{a+b}}
  = \frac{2\abs{a}\abs{b}\bigl(1-\cos\theta(a,b)\bigr)}{\abs{a}+\abs{b}+\abs{a+b}},
  \\
  \abs{a-b}-\bigabs{\abs{a}-\abs{b}}
  &= \frac{\abs{a-b}^2-\bigabs{\abs{a}-\abs{b}}^2}{\abs{a-b}+\bigabs{\abs{a}-\abs{b}}}
  = \frac{2\abs{a}\abs{b}\bigl(1-\cos\theta(a,b)\bigr)}{\abs{a-b}+\bigabs{\abs{a}-\abs{b}}},
\end{align*}
and the fact that $1-\cos\theta \sim \theta^2$ for $\theta \in [0,\pi]$.

\section{Main results}\label{L:0}

Here $N_0,N_1,N_2,L_0,L_1,L_2 > 0$, $\pm_0,\pm_1,\pm_2$ denote arbitrary signs, we assume that $u_1,u_2 \in L^2(\R^{1+3})$ satisfy \eqref{A:120}, and $\norm{\cdot}$ denotes the norm on $L^2(\R^{1+3})$.

\subsection{Anisotropic bilinear estimate}\label{X}

One of the key questions motivating the present work is to what extent additional Fourier restrictions lead to improvements in the standard bilinear estimates of Theorem \ref{A:Thm1}.

For example, if we start with the standard estimate
\begin{equation}\label{X:18}
  \norm{u_1u_2} \lesssim \left((\Nmin^{12})^2L_1L_2\right)^{1/2}
  \norm{u_1}\norm{u_2},
\end{equation}
and then restrict the spatial output frequency $\xi_0$ to a ball $B$ of radius $r \ll \Nmin^{12}$ and arbitrary center, the estimate improves to\footnote{This is more general than \eqref{A:110} (which corresponds to the special case $\xi_* = 0$ and $r=N_0$) in the low output case $N_0 \ll N_1 \sim N_2$, since \eqref{X:20} tells us that the position of the ball is irrelevant.}
\begin{equation}\label{X:20}
  \norm{\Proj_{\R \times B}(u_1u_2)} \lesssim \left(r\Nmin^{12}L_1L_2\right)^{1/2}
  \norm{u_1}\norm{u_2}.
\end{equation}
Moreover, since the position of the ball is arbitrary, $\Proj_{\R \times B}$ can equivalently be placed in front of either $u_1$ or $u_2$, as can be seen by a standard tiling argument, essentially as in the proof of Lemma \ref{B:Lemma2} below.

The estimate \eqref{X:20} is easily proved by modifying the proof that we give for \eqref{A:76} (in section \ref{C}), but it also follows immediately from the following much more powerful anisotropic estimate, where instead of restricting to a ball we just restrict the spatial frequency in a single direction $\omega \in \mathbb S^2$. To be precise, we restrict to a thickened plane given by $\xi\cdot\omega \in I$, where $I$ is an interval.

\begin{theorem}\label{Z:Thm1}
Let $\omega \in \mathbb S^2$, and let $I \subset \R$ be a compact interval. Assume $\widehat{u_1}$ is supported outside an angular neighborhood of the orthogonal complement $\omega^\perp$ of $\omega$:
\begin{equation}\label{Z:10}
  \supp \widehat{u_1} \subset
  \left\{ (\tau,\xi) \colon
  \theta(\xi,\omega^\perp) \ge \alpha \right\}
  \qquad \text{for some $\;0 < \alpha \ll 1$}.
\end{equation}
Assuming also \eqref{A:120} as usual, we then have
\begin{equation}\label{Z:14}
  \norm{\Proj_{\xi_0 \cdot \omega \in I}
  (u_1 u_2)}
  \lesssim \left( \frac{\abs{I}\Nmin^{12}L_1L_2 }{\alpha} \right)^{1/2}
  \norm{u_1}
  \norm{u_2},
\end{equation}
where $\abs{I}$ is the length of $I$.
\end{theorem}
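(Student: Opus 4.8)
The plan is to reduce \eqref{Z:14} to an $L^2$-orthogonality statement combined with a pointwise volume bound, in the spirit of the unified method of section \ref{B}. Writing the bilinear form in Fourier space via the convolution formula \eqref{A:130}, we want to bound
$$
  \Bignorm{\Proj_{\xi_0\cdot\omega\in I}(u_1u_2)}
  = \sup \Abs{\int \widehat{u_1}(X_1)\,\widehat{u_2}(X_2)\,\overline{\widehat{F}(X_1+X_2)}\,dX_1\,dX_2},
$$
the supremum over $F \in L^2$ of unit norm with $\supp\widehat F \subset \{\xi_0\cdot\omega\in I\}$. By Cauchy--Schwarz, the crucial quantity is, for fixed $X_1$ in the support of $\widehat{u_1}$,
$$
  \sup_{X_1}\; \Abs{\left\{ X_2 \in \supp\widehat{u_2} \colon X_1+X_2\in\supp\widehat F \right\}},
$$
i.e.\ the measure of the slice of $\supp\widehat{u_2}\cap(\,\supp\widehat F - X_1\,)$; the standard argument (Lemma \ref{B:Lemma2}-type) then gives $\norm{\Proj(u_1u_2)}^2$ bounded by this supremal volume times $\norm{u_1}^2\norm{u_2}^2$ — or more precisely a symmetrized version of it, since we may also slice in $X_2$ or in $X_0$. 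So everything comes down to estimating the $(1+3)$-dimensional volume of the set of $X_2=(\tau_2,\xi_2)$ with: $\xi_2\in\Delta B_{N_2}$, $\hypwt_2 = O(L_2)$, and simultaneously $\xi_1+\xi_2$ lying in the slab $\{\xi_0\cdot\omega\in I\}$ and $\hypwt_0 = O(L_0)$ — with $X_1$, hence $\xi_1$ and $\hypwt_1$, frozen.

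First I would integrate out $\tau_2$: the constraint $\hypwt_2 \in [-L_2,L_2]$ on $\tau_2$ (with $\xi_2$ fixed) costs a factor $\lesssim L_2$, and the remaining constraint $\hypwt_0=O(L_0)$ becomes (using $\tau_0 = \tau_1+\tau_2$) a condition confining $\mp_0|\xi_1+\xi_2| \pm_2 |\xi_2|$ to an interval of length $O(L_0+L_2)\lesssim \max(L_0,L_2)$ around the fixed number $\tau_1$ — in effect a thickened-sphere condition on $\xi_2$. Thus the $\xi_2$-volume is that of the intersection of: the annulus $\Delta B_{N_2}$ (a thickened sphere $S_{\sim N_2}(\sim N_2)$), the slab $\{\xi\cdot\omega\in I - \xi_1\cdot\omega\}$ of width $\abs I$, and a thickened level set of $|\xi|\pm|\xi_1+\xi_2|$. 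In the transversal/slab direction $\omega$ the thickness is $\abs I$; the hypothesis \eqref{Z:10} that $\xi_1$ makes angle $\gtrsim\alpha$ with $\omega^\perp$ — equivalently $|\xi_1\cdot\omega|\gtrsim\alpha|\xi_1|$ — is exactly what makes the sphere $\{|\xi_2| = \text{const}\}$ (radius $\sim N_2$) and the shifted sphere transversal to the slab with angle $\gtrsim\alpha$, so that the $\xi_2$-volume is $\lesssim N_2^2 \cdot \abs I/\alpha$, up to the hyperbolic thickening. Combining, the slice volume is $\lesssim \abs{I}\,N_2^2\,L_2\,\max(L_0,L_2)/\alpha$; running the symmetric argument with the roles arranged so that the two smallest of $L_0,L_1,L_2$ appear, and using that $\Nmin^{12}$ rather than $N_2$ is what survives after we also exploit slicing in $\xi_1$ (the smaller annulus controls the radius of the relevant sphere), yields $\norm{\Proj(u_1u_2)}^2 \lesssim \dfrac{\abs I\,\Nmin^{12} L_1 L_2}{\alpha}\norm{u_1}^2\norm{u_2}^2$.

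The main obstacle I anticipate is making the geometric ``transversality'' step fully rigorous: one must check that the level sets of the elliptic weight $|\xi_2|$ and of the shifted elliptic/hyperbolic weight really do cut the slab $\{\xi\cdot\omega\in I'\}$ at angle $\gtrsim\alpha$ under hypothesis \eqref{Z:10}, uniformly in the frozen data, and that the thickened versions of these level sets still intersect the slab in a set of volume $\lesssim (\text{area})\times(\text{thickness})/\alpha$ — the relevant analogue of Lemma \ref{C:Lemma1} but with a slab in place of the second thickened sphere. The cleanest route is probably to change variables so that $\omega = e_1$, write $\xi_2 = (\xi_2\cdot\omega,\xi_2')$, and observe that on the slab $\xi_2\cdot\omega$ ranges over a set of measure $\lesssim\abs I$, while for each fixed value of $\xi_2\cdot\omega$ the remaining two-dimensional integral in $\xi_2'$ over (annulus $\cap$ thickened sphere) is handled by the already-proven Lemma \ref{C:Lemma1} in the $\xi_2'$-plane, with the factor $1/\alpha$ emerging from $|\xi_1\cdot\omega| \gtrsim \alpha |\xi_1|$ when one bounds $\abs{\xi_1'}$ (the ``$\abs{\xi_0}$'' in that lemma) from below. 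A secondary bookkeeping point is to confirm that the asymmetric hypothesis — it is only $\widehat{u_1}$, not $\widehat{u_2}$, that is restricted away from $\omega^\perp$ — is consistent with which variable we freeze; one should freeze $X_1$ precisely because that is where the angular separation lives, so no duality/permutation is invoked here, unlike in Theorem \ref{A:Thm1}.
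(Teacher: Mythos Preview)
There is a genuine gap, beginning with a setup error: Theorem~\ref{Z:Thm1} has no output cone restriction, so there is no hyperbolic weight ``$\hypwt_0=O(L_0)$'' anywhere---the only output constraint is the slab $\xi_0\cdot\omega\in I$. Your references to ``the two smallest of $L_0,L_1,L_2$'' indicate you have conflated this with Theorem~\ref{A:Thm1}. If you freeze $X_1$ and apply Lemma~\ref{B:Lemma1}, the $X_2$-constraints are only $X_2\in K^{\pm_2}_{N_2,L_2}$ and $\xi_2\cdot\omega$ in a translated interval; integrating $\tau_2$ gives a factor $L_2$ and leaves \emph{no} ellipsoid/hyperboloid condition on $\xi_2$, so the resulting volume bound is only $\sim N_2^2\abs{I}L_2$, which is useless. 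To see both $L_1$ and $L_2$ you must fix $X_0$ and measure in $X_1$ (together with tiling by the slab, as in Lemma~\ref{B:Lemma2}); \emph{that} is where the level-set condition $f(\xi_1)=\pm_1\abs{\xi_1}\pm_2\abs{\xi_0-\xi_1}=\tau_0+O(\Lmax^{12})$ appears, and it is the bound \eqref{K:102} on the resulting set $E$ that is the heart of the matter.

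Even after fixing the setup, your transversality heuristic fails. The hypothesis \eqref{Z:10} says $\abs{\xi_1\cdot\omega}\gtrsim\alpha\abs{\xi_1}$, i.e.\ $e_1\cdot\omega\gtrsim\alpha$; it says nothing about $e_2=\pm_2(\xi_0-\xi_1)/\abs{\xi_0-\xi_1}$, so the $\omega$-derivative $\partial_\omega f=(e_1-e_2)\cdot\omega$ need not be $\gtrsim\alpha$. This is exactly why the paper's proof (section~\ref{K}) first performs an angular Whitney decomposition localizing $\theta_{12}\sim\gamma$ to separated sectors $\Gamma_\gamma(\omega_1),\Gamma_\gamma(\omega_2)$, then splits $\supp\widehat{u_2}$ into three regions $A_1,A_2,A_3$ according to where $e_2$ sits relative to $\omega^\perp$. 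When $e_2$ is near $\omega^\perp$ (case $A_1$) the gain comes not from $\alpha$ but from the sector separation $\gamma\gtrsim\alpha$; when $e_1,e_2$ lie on the same side of $\omega^\perp$ (case $A_2$) a delicate analysis of $e_1-e_2$ via Lemma~\ref{K:Lemma1} is needed; when they lie on opposite sides (case $A_3$, with $N_1\ll N_2$) one is forced to integrate in the $\omega$-direction and control the projected area of the thickened ellipsoid/hyperboloid $\{f=\tau_0\}$, which is precisely Theorem~\ref{Z:Thm2}. On top of this, summing the dyadic scales $\gamma$ threatens a logarithmic loss and requires an orthogonality argument based on \eqref{K:66}. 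The proposed reduction to ``Lemma~\ref{C:Lemma1} in the $\xi_2'$-plane'' does not capture any of this structure.
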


We remark that since the position of the interval $I$ is irrelevant, the restriction can also be put, equivalently, on either $u_1$ or $u_2$, by a tiling argument.

The estimate \eqref{Z:14} is optimal up to an absolute factor, as can be seen by testing it, for any $N > 0$, on
\begin{equation}\label{Z:16}
\left\{
\begin{aligned}
 &\omega,\omega' \in \mathbb S^2, \qquad \theta(\omega,\omega') \sim N^{-1/2},
  \\
  &\widehat u_1(\tau,\xi)
  = \chi_{\tau=\xi\cdot\omega+O(1)}\chi_{\abs{\xi} \sim N}\chi_{\theta(\xi,\omega') \le N^{-1/2}},
  \\
  &\widehat u_2(\tau,\xi)
  = \chi_{\tau=\xi\cdot\omega+O(1)}\chi_{\abs{\xi} \sim N}\chi_{\theta(\xi,\omega) \le N^{-1/2}},
\end{aligned}
\right.
\end{equation}
and with
$$
  I = [-N^{1/2},N^{1/2}], \qquad \alpha \sim N^{-1/2}.
$$
Then \eqref{A:120} holds with $N_1 \sim N_2 \sim N$ and $L_1 \sim L_2 \sim 1$ (by Lemma \ref{D:Lemma4} below), \eqref{Z:10} holds, and the two sides of \eqref{Z:14} are comparable, uniformly in $N$.

The proof of Theorem \ref{Z:Thm1}, given in section \ref{K}, relies in part on the following estimate for the area of intersection between either an ellipsoid or a hyperboloid of revolution, a thickened plane, and a ball centered at one of the foci.

\begin{theorem}\label{Z:Thm2}
Let $a \ge b > 0$, and let $S \subset \R^3$ be the surface obtained by revolving either the ellipse
$$
  \frac{x^2}{a^2} + \frac{y^2}{b^2} = 1
$$
or the hyperbola
$$
  \frac{x^2}{a^2} - \frac{y^2}{b^2} = 1
$$
about the $x$-axis. Let $B \subset \R^3$ be a ball centered at one of the foci of $S$, with radius
\begin{equation}\label{Z:20}
  \frac{b^2}{a} \lesssim R \lesssim a.
\end{equation}
Let $\delta > 0$, and let $P_\delta \subset \R^3$ be any $\delta$-thickened plane. Then
\begin{equation}\label{Z:22}
  \sigma(S \cap B \cap P_\delta) \lesssim R\delta,
\end{equation}
where $\sigma$ denotes surface measure on $S$.
\end{theorem}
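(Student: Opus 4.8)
The plan is to use the focus–directrix description of the conic together with the reflection property of the focal radius, reducing the surface-measure estimate to a one-dimensional computation. Place the focus at the origin, so that $B = \{ |x| \le R \}$. In the ellipse case write the semi-latus rectum as $\ell = b^2/a$ and the eccentricity as $e = c/a \in [0,1)$ (with $c = \sqrt{a^2-b^2}$); in the hyperbola case $\ell = b^2/a$ and $e = c/a > 1$ with $c = \sqrt{a^2+b^2}$. The focal radius of a point on $S$ at distance $r$ from the focus satisfies the polar equation $r = \ell/(1 + e\cos\varphi)$ (suitably interpreted for the relevant branch), where $\varphi$ is the angle from the axis of revolution measured at the focus. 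The key geometric input is that, on the conic, $r$ is a \emph{monotone} function of $\varphi$ on each branch, and that the condition $r \le R$ — which is exactly the constraint imposed by $B$ — therefore cuts out an interval of $\varphi$'s; by $\eqref{Z:20}$ the admissible $R$ lie in the range where this interval is nonempty but the corresponding surface is not all of $S$.

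The main step is then to parametrize $S \cap B$ by the focal radius $r$ and the azimuthal angle $\psi$ around the axis of revolution. First I would record the identity for surface measure in these coordinates. Writing a point of $S$ as $(r\cos\varphi, r\sin\varphi\cos\psi, r\sin\varphi\sin\psi)$ with $\varphi = \varphi(r)$ determined by the conic, one computes
\begin{equation*}
  d\sigma = r \sin\varphi \sqrt{ r^2 + (r')^2 \sin^2\varphi } \;\; \frac{dr\,d\psi}{|r'|}
\end{equation*}
(where $r' = dr/d\varphi$), or rather an expression of this shape; the point I want to extract is that $d\sigma \lesssim r \, dr \, d\psi$ up to the focal geometry, because the "radial speed" factor $ds/dr$ along the meridian is $O(1)$ precisely in the regime $\eqref{Z:20}$. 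Concretely, differentiating the polar equation gives $|r'| = e r^2 \sin\varphi / \ell$, and $ds^2 = dr^2 + r^2 d\varphi^2 = dr^2 (1 + (r/r')^2)$, so $ds/dr = \sqrt{1 + (\ell/(e r \sin\varphi))^2}$. Using $r \ge \ell / (1+e) \gtrsim \ell = b^2/a$ together with a lower bound on $\sin\varphi$ away from the axis (where $S$ degenerates), one checks $ds/dr \lesssim 1$ exactly under $\eqref{Z:20}$ — this is where both inequalities in $\eqref{Z:20}$ are used, the lower bound $b^2/a \lesssim R$ guaranteeing we are not near the vertex on the axis and the upper bound $R \lesssim a$ keeping us on the near branch. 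Hence $\sigma(S \cap B \cap P_\delta) \lesssim \int r\, dr\, d\psi$ taken over the $(r,\psi)$ with $r \le R$ and the corresponding point lying in $P_\delta$.

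It then remains to bound $\int_{r \le R} r\, dr \, d\psi$ over the slab constraint. For fixed $r$, the set of azimuthal angles $\psi$ for which the point $(r\cos\varphi(r), r\sin\varphi(r)\cos\psi, r\sin\varphi(r)\sin\psi)$ lies in the $\delta$-thickened plane $P_\delta$ is, by elementary planar geometry (intersecting a circle of radius $\rho = r\sin\varphi(r) \le r \le R$ with a slab of width $2\delta$), a union of arcs of total length $\lesssim \delta/\rho$ when $\rho \gtrsim \delta$ and $\lesssim 1$ otherwise — in both cases the circle contributes arclength $\lesssim \delta$ to $P_\delta$, i.e. $\rho \cdot (\text{angular measure}) \lesssim \delta$. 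But that circle, traced as $\psi$ varies with $r$ fixed, is exactly the $r$-level curve, and its contribution to $d\sigma$ is $\lesssim r\, dr \cdot (\text{angular measure of }\psi\text{'s})$; so its contribution to $\sigma(S\cap B\cap P_\delta)$ is $\lesssim (r/\rho)\,\delta\, dr$. Since $\rho = r\sin\varphi$ and $\sin\varphi$ is bounded below on the admissible range of $r$ (again by $\eqref{Z:20}$, which keeps us away from the axis), $r/\rho \lesssim 1$, and integrating $dr$ over an interval of length $\le R$ yields the bound $R\delta$. The one genuine obstacle is the bookkeeping near the axis of revolution $\sin\varphi \to 0$, where both the surface-measure Jacobian and the factor $r/\rho$ threaten to blow up: the resolution is that $\eqref{Z:20}$ — specifically $R \gtrsim b^2/a = \ell$ on the lower end and $R \lesssim a$ on the upper end — confines $S \cap B$ to the portion of the conic where $\varphi$ stays in a fixed compact subinterval of $(0,\pi)$, so all these factors are $O(1)$; I would handle the ellipse and hyperbola uniformly by noting that the polar equation $r = \ell/(1+e\cos\varphi)$ and the estimate $|r'| = er^2\sin\varphi/\ell$ have the same form in both cases, the only difference being the range of $e$ and of $\varphi$, neither of which affects the final bound.
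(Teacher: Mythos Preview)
Your argument has two genuine gaps.

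\textbf{The main gap: the circle--slab intersection.} You claim that for a circle of radius $\rho$ intersected with a $\delta$-slab, the arclength of the intersection is $\lesssim \delta$ (equivalently, the angular measure is $\lesssim \delta/\rho$). This is false when the slab is nearly tangent to the circle: in that case the arclength is $\sim \sqrt{\rho\delta}$ and the angular measure is $\sim \sqrt{\delta/\rho}$, which can be much larger than $\delta/\rho$. In the paper's language (equation \eqref{G:130}), the angular measure at a fixed slice is $\lesssim \delta/\sqrt{Q}$ with $Q = f^2 - g^2$, and the factor $1/\sqrt{Q}$ blows up at the tangency points $x_1, x_2$ where the bounding line meets the conic. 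Controlling the resulting integral $\int dx/\sqrt{Q(x)}$ is precisely the heart of the matter and occupies almost the entire proof in Section \ref{G:50}: one needs case analysis on the slope $p$ of the slab, on the location of the roots $x_1,x_2$ of $Q$ relative to the ball, and separate arguments for ellipse versus hyperbola. Your proposal skips this difficulty entirely by asserting the wrong bound at the very first step.

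\textbf{A secondary gap: the near-vertex region.} You assert that \eqref{Z:20} confines $S\cap B$ to $\varphi$ in a fixed compact subinterval of $(0,\pi)$, so that $\sin\varphi$ stays bounded below. This is not correct: the vertex nearest the focus is at distance $a-c$ (ellipse) or $c-a$ (hyperbola), and in both cases this is $\le b^2/a \lesssim R$, so the vertex always lies in $B$ and $\varphi$ does reach $0$. Consequently $ds/dr$ does blow up there (since $r' = er^2\sin\varphi/\ell \to 0$), contrary to your claim that $ds/dr \lesssim 1$ throughout. It is true that the product $\rho\cdot ds/dr$ stays bounded near the vertex, so this particular blow-up is harmless for the surface measure; but your argument as written relies on the separate bounds $\rho \le r$ and $ds/dr \lesssim 1$, the second of which is false.

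Even fixing the second point, the first gap is decisive: once you use the correct $\sqrt{\rho\delta}$ bound for tangent slabs, the $r$-integration no longer closes to give $R\delta$ without further structure, and one is forced into exactly the kind of case analysis the paper carries out.
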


The proof is given in section \ref{G:50}. The explanation for the left inequality in \eqref{Z:20} is simply that the minimum distance from $S$ to either of its foci is comparable to $b^2/a$, hence $S \cap B$ is empty unless $R \gtrsim b^2/a$. The right inequality in \eqref{Z:20} is only a restriction when $S$ is a hyperboloid, of course. In that case, \eqref{Z:22} really does fail for $R \gg a$, as can be seen by taking $P_\delta$ to be a thickening of a tangent plane to the asymptotic cone of $S$, with $\delta$ comparable to the minimum distance from the cone to $S \cap B$, namely $\delta \sim ab/R \ll b$; then the area of intersection is comparable to $R\sqrt{R(b/a)\delta} \gg R\sqrt{b\delta} \gg R\delta$.

\subsection{Null form estimates}\label{N}

Here we discuss estimates where the product $u_1u_2$ is replaced by the bilinear operator $\nullform(u_1,u_2)$, defined on the Fourier transform side by inserting the angle $\theta_{12} = \theta(\pm_1\xi_1,\pm_2\xi_2)$ into \eqref{A:130}:\begin{equation}\label{N:10}
  \mathcal F \left( \nullform(u_1,u_2) \right)(X_0)
  = \iint \theta_{12} \,
  \widehat{u_1}(X_1)\widehat{u_2}(X_2)  
  \, \delta(X_0-X_1-X_2) \, dX_1 \, dX_2.
\end{equation}
We call $\nullform$ a \emph{null form}, since it is related to the null forms investigated in \cite{Klainerman:1993}, and subsequently in a number of papers by various authors; see \cite{Foschi:2000} for further references.

In general, the null form improves the estimates. To motivate this heuristically, consider for a moment the generic problem
\begin{equation}\label{N:20}
  (i\partial_t \pm_0 \abs{D}) u_0 = B(u_1,u_2), \qquad u(0) = 0,
\end{equation}
for given $u_1,u_2$, where $B$ is some bilinear operator. This sort of problem would arise as part of an iteration scheme for a nonlinear wave equation, for example. Then $u_1,u_2$ would be previous iterates whose regularity is known, and we want to solve for $u_0$ and find its regularity. Heuristically, this corresponds, in Fourier space, to dividing by the symbol $\hypwt_0$, suggesting that the regularity of $u_0$ depends strongly on the behavior of $\mathcal F B(u_1,u_2)(X_0)$ as $\hypwt_0 \to 0$. Similarly, from the previous level of the iteration, the regularity of $u_1,u_2$ depends the behavior as $\hypwt_1, \hypwt_2 \to 0$. This heuristic suggests that the worst case is when all three hyperbolic weights vanish.

\begin{definition}\label{N:Def1}
Let $(X_0,X_1,X_2)$ be a bilinear interaction (Definition \ref{A:Def}). Given a triple of signs $(\pm_0,\pm_1,\pm_2)$, this bilinear interaction is said to be \emph{null} if all the hyperbolic weights, as defined in \eqref{N:12}, vanish: $\hypwt_0 = \hypwt_1 = \hypwt_2 = 0$.
\end{definition}

In the bilinear null interaction, the $X_j$ all lie on the null cone $K^+\cup K^-$, and since $X_0 = X_1 + X_2$, it is clear that they must be collinear, hence $\theta_{12} = 0$. 

The following lemma generalizes this observation.

\begin{lemma}\label{I:Lemma1} Consider a bilinear interaction $(X_0,X_1,X_2)$, with $\xi_j \neq 0$, $j=0,1,2$. Then for all signs $(\pm_0,\pm_1,\pm_2)$ we have, with notation as in \eqref{N:12} and \eqref{A:170}, 
\begin{equation}\label{I:10}
  \max\left( \abs{\hypwt_0}, \abs{\hypwt_1}, \abs{\hypwt_2} \right)
  \gtrsim
  \min\left(\abs{\xi_1},\abs{\xi_2}\right)\theta_{12}^2.
\end{equation}
Moreover, if
\begin{equation}\label{I:12}
  \abs{\xi_0} \ll \abs{\xi_1} \sim \abs{\xi_2}
  \qquad \text{and} \qquad
  \pm_1 = \pm_2,
\end{equation}
then $\theta_{12} \sim 1$, whereas if \eqref{I:12} does not hold, then
\begin{equation}\label{I:16}
  \max\left( \abs{\hypwt_0}, \abs{\hypwt_1}, \abs{\hypwt_2} \right)
  \gtrsim
  \frac{\abs{\xi_1}\abs{\xi_2}\theta_{12}^2}{\abs{\xi_0}}.
\end{equation}
Furthermore, if the sign $\pm_0$ is chosen so that
\begin{equation}\label{G:8}
  \abs{\hypwt_0} = \bigabs{\abs{\tau_0} - \abs{\xi_0}},
\end{equation}
and if
\begin{equation}\label{G:10}
  \abs{\hypwt_1}, \abs{\hypwt_2} \ll \abs{\hypwt_0},
\end{equation}
then
\begin{equation}\label{G:12}
  \abs{\hypwt_0} \sim
  \left\{
  \begin{alignedat}{2}
  &\min\left(\abs{\xi_1},\abs{\xi_2}\right)\theta_{12}^2& \quad &\text{if $\pm_1=\pm_2$},
  \\
  &\frac{\abs{\xi_1}\abs{\xi_2}\theta_{12}^2}{\abs{\xi_0}}& \quad &\text{if $\pm_1\neq\pm_2$}.
  \end{alignedat}
  \right.
\end{equation}
\end{lemma}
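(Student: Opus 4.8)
The plan is to reduce everything to the two elementary identities \eqref{I:2} and \eqref{I:4}, applied to the vectors $a = \pm_1\xi_1$, $b = \pm_2\xi_2$, after disposing of the signs. First I would record the key algebraic fact: since $\xi_0 = \xi_1 + \xi_2$, we have $\xi_0 = \pm_1\xi_1 + \pm_2\xi_2$ when $\pm_1 = \pm_2$ (up to the common sign), and $\pm_1\xi_1 - (\pm_1\xi_2)$-type expressions when $\pm_1 \neq \pm_2$; concretely, if $\pm_1 = \pm_2$ then $\abs{\xi_1} + \abs{\xi_2} - \abs{\xi_0} = \abs{a} + \abs{b} - \abs{a+b}$, while if $\pm_1 \neq \pm_2$ then $\bigabs{\abs{\xi_1} - \abs{\xi_2}}$ and $\abs{\xi_0}$ play the roles of $\bigabs{\abs{a} - \abs{b}}$ and $\abs{a - b}$ in \eqref{I:4}. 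The second basic observation is that the sum of the hyperbolic weights telescopes against the elliptic weights: from \eqref{N:12} and $\tau_0 = \tau_1 + \tau_2$,
$$
  \hypwt_1 + \hypwt_2 - \hypwt_0 = \pm_1\abs{\xi_1} + \pm_2\abs{\xi_2} \mp_0 \abs{\xi_0},
$$
so that for a suitable choice of the output sign $\pm_0$ the right-hand side becomes exactly $\pm(\abs{\xi_1} + \abs{\xi_2} - \abs{\xi_0})$ in the $\pm_1 = \pm_2$ case, and $\pm(\bigabs{\abs{\xi_1}-\abs{\xi_2}} - \abs{\xi_0})$ in the $\pm_1 \neq \pm_2$ case (using $\abs{\xi_0} \le \abs{\xi_1} + \abs{\xi_2}$ and, in the second case, that $\abs{\xi_0} \ge \bigabs{\abs{\xi_1}-\abs{\xi_2}}$). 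Combining this with \eqref{I:2}, resp.\ \eqref{I:4}, immediately yields
$$
  \bigabs{\hypwt_1 + \hypwt_2 - \hypwt_0} \sim \min(\abs{\xi_1},\abs{\xi_2})\theta_{12}^2
  \quad\text{or}\quad
  \bigabs{\hypwt_1 + \hypwt_2 - \hypwt_0} \sim \frac{\abs{\xi_1}\abs{\xi_2}}{\abs{\xi_0}}\theta_{12}^2,
$$
and then \eqref{I:10} follows from the triangle inequality, and \eqref{I:16} follows in the second case and also in the first case under the assumption that \eqref{I:12} fails (when $\pm_1 = \pm_2$ but \eqref{I:12} fails, one has $\abs{\xi_0} \gtrsim \max(\abs{\xi_1},\abs{\xi_2})$, so $\min(\abs{\xi_1},\abs{\xi_2}) \gtrsim \abs{\xi_1}\abs{\xi_2}/\abs{\xi_0}$ and the two right-hand sides are comparable).

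For the claim that \eqref{I:12} forces $\theta_{12} \sim 1$: if $\pm_1 = \pm_2$ then $\theta_{12} = \theta(\xi_1,\xi_2)$, and from $\abs{\xi_0}^2 = \abs{\xi_1}^2 + \abs{\xi_2}^2 + 2\abs{\xi_1}\abs{\xi_2}\cos\theta_{12}$ together with $\abs{\xi_1}\sim\abs{\xi_2}$ and $\abs{\xi_0} \ll \abs{\xi_1}$, we get $\cos\theta_{12} = -1 + O(\abs{\xi_0}^2/\abs{\xi_1}^2) \le -\tfrac12$, hence $\theta_{12} \sim 1$; in particular the alternative in \eqref{G:12} is forced and there is nothing to prove for the first line of \eqref{G:12} beyond what we already have unless \eqref{I:12} fails, in which case the comparability of the two expressions was noted above.

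Finally, for the sharp two-sided bound \eqref{G:12} under the extra hypotheses \eqref{G:8} and \eqref{G:10}: the hypothesis \eqref{G:10} gives $\hypwt_1 + \hypwt_2 = O(\abs{\hypwt_0})$ with small implied constant, so $\bigabs{\hypwt_1 + \hypwt_2 - \hypwt_0} \sim \abs{\hypwt_0}$; combined with the displayed comparability from the previous paragraph (and the case distinction on the sign pair, using the failure of \eqref{I:12} in the $\pm_1 = \pm_2$ case, which is automatic since $\theta_{12} \sim 1$ would contradict a nontrivial bound — more precisely, if \eqref{I:12} held then by \eqref{I:10} applied the other way $\hypwt_0$ could not dominate, so \eqref{G:10} already excludes it), this gives \eqref{G:12}. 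The role of \eqref{G:8} is to ensure that the particular output sign $\pm_0$ appearing in the telescoping identity is the one realizing $\bigabs{\abs{\tau_0} - \abs{\xi_0}}$, so that the conclusion is about the genuinely smallest hyperbolic weight; one should check that with the opposite sign the weight is $\sim \abs{\tau_0} + \abs{\xi_0} \gtrsim \abs{\xi_0}$, which is much larger, consistent with \eqref{G:10}.

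The main obstacle I anticipate is purely bookkeeping: carefully tracking the four sign combinations $(\pm_1,\pm_2)$ and the induced choice of $\pm_0$ so that the telescoping identity lands on the correct side of \eqref{I:2}/\eqref{I:4}, and making sure that in the $\pm_1\neq\pm_2$ case the quantity $\abs{\xi_0}$ really does sit between $\bigabs{\abs{\xi_1}-\abs{\xi_2}}$ and $\abs{\xi_1}+\abs{\xi_2}$ so that \eqref{I:4} is applicable with $\abs{a-b}$ replaced by $\abs{\xi_0}$ up to constants (here one uses that in the relevant regime $\abs{\xi_0} \gtrsim \max(\abs{\xi_1},\abs{\xi_2}) \gtrsim \abs{a-b}$, or else the low-output case \eqref{I:12} is in force and handled separately). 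None of the steps is deep; the content is entirely in the two trigonometric identities already supplied.
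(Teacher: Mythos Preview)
Your approach is essentially the same as the paper's: both hinge on the telescoping identity $\hypwt_0-\hypwt_1-\hypwt_2 = \pm_0\abs{\xi_0}\mp_1\abs{\xi_1}\mp_2\abs{\xi_2}$, reduce to the two sign cases $(\pm_1,\pm_2)=(+,+)$ and $(+,-)$, and feed the result into \eqref{I:2} or \eqref{I:4}. The logical flow for \eqref{I:10}, \eqref{I:16}, and the $\theta_{12}\sim 1$ claim under \eqref{I:12} is correct and matches the paper.

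One small error: your parenthetical claim that \eqref{G:10} ``already excludes'' \eqref{I:12} is wrong. Take $\pm_1=\pm_2=+$ with $\abs{\xi_0}\ll\abs{\xi_1}\sim\abs{\xi_2}$ and $\hypwt_1=\hypwt_2=0$; then $\tau_0=\abs{\xi_1}+\abs{\xi_2}$ and $\abs{\hypwt_0}=\bigabs{\abs{\tau_0}-\abs{\xi_0}}\sim\abs{\xi_1}$, so \eqref{G:8}--\eqref{G:10} hold and \eqref{I:12} holds simultaneously. Fortunately this does not damage the proof of \eqref{G:12}: in that situation $\theta_{12}\sim 1$ and $\abs{\hypwt_0}\sim\abs{\xi_1}\sim\min(\abs{\xi_1},\abs{\xi_2})\theta_{12}^2$, so the first line of \eqref{G:12} is verified directly. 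You should simply drop the parenthetical and instead argue, as the paper does, that under \eqref{G:10} with $\pm_1=\pm_2=+$ one has $\tau_0>0$, so \eqref{G:8} forces $\pm_0=+$, and then the telescoping identity gives exactly $\abs{\xi_1}+\abs{\xi_2}-\abs{\xi_0}$, to which \eqref{I:2} applies with $\sim$ rather than merely $\gtrsim$. The analogous sign identification handles $(+,-)$.
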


This is proved in section \ref{G}.

As a first example, if we combine \eqref{I:10} with the standard bilinear estimate \eqref{A:114} from Theorem \ref{A:Thm1}, we immediately obtain the following null form estimate:

\begin{corollary}\label{N:Thm1} Assume $u_1,u_2$ satisfy \eqref{A:120}. Then
$$
  \Bignorm{ \Proj_{K^{\pm_0}_{N_0,L_0}}\!\!
  \nullform(u_1,u_2)}
  \lesssim \left( N_0 L_0 L_1 L_2 \right)^{1/2}
  \norm{u_1}\norm{u_2}.
$$
\end{corollary}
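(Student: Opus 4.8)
The plan is to reduce the null form estimate to the sharp bilinear estimate \eqref{A:114} by exploiting the pointwise bound \eqref{I:10} for the angle $\theta_{12}$ in terms of the hyperbolic weights. Since on the support of $\widehat{u_j}$ we have $\abs{\hypwt_j} \le L_j$ for $j=1,2$, and on the support of the output multiplier $\Proj_{K^{\pm_0}_{N_0,L_0}}$ we have $\abs{\hypwt_0} \le L_0$, inequality \eqref{I:10} forces, on the region of integration in \eqref{N:10},
\begin{equation*}
  \min(\abs{\xi_1},\abs{\xi_2}) \, \theta_{12}^2 \lesssim \max(L_0,L_1,L_2),
\end{equation*}
hence
\begin{equation*}
  \theta_{12} \lesssim \left( \frac{\max(L_0,L_1,L_2)}{\Nmin^{12}} \right)^{1/2}.
\end{equation*}

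\textbf{Main argument.} Define an auxiliary bilinear operator whose Fourier symbol is obtained by replacing $\theta_{12}$ in \eqref{N:10} by $\chi_{K^{\pm_1}_{N_1,L_1}}(X_1)\chi_{K^{\pm_2}_{N_2,L_2}}(X_2)$ times the constant upper bound $\bigl(\max(L_0,L_1,L_2)/\Nmin^{12}\bigr)^{1/2}$. Because $0 \le \theta_{12} \le \pi$ is a \emph{nonnegative} kernel on the relevant region and is pointwise dominated by this constant there, and because the output projection and the supports of the $\widehat{u_j}$ confine the integration in \eqref{N:10} to exactly that region, we may estimate the modulus of $\mathcal F(\nullform(u_1,u_2))(X_0)$ pointwise by the corresponding expression with $\theta_{12}$ replaced by the constant. (More precisely: write $\theta_{12} = \theta_{12} \cdot \chi$, where $\chi$ is the characteristic function of the set where $\abs{\hypwt_1} \le L_1$, $\abs{\hypwt_2} \le L_2$; on the support of $\widehat{u_1}\otimes\widehat{u_2}$ this $\chi$ equals $1$, and combined with $X_0 \in K^{\pm_0}_{N_0,L_0}$ it gives $\theta_{12} \lesssim (\max L / \Nmin^{12})^{1/2}$.) Therefore
\begin{equation*}
  \Bignorm{ \Proj_{K^{\pm_0}_{N_0,L_0}} \nullform(u_1,u_2)}
  \lesssim \left( \frac{\Lmax^{012}}{\Nmin^{12}} \right)^{1/2}
  \Bignorm{ \Proj_{K^{\pm_0}_{N_0,L_0}} (u_1 u_2)}
  \lesssim \left( \frac{\Lmax^{012}}{\Nmin^{12}} \right)^{1/2}
  \left( N_0 \Nmin^{12} \Lmin^{012} \Lmed^{012} \right)^{1/2}
  \norm{u_1}\norm{u_2},
\end{equation*}
where the second inequality is \eqref{A:114} from Theorem \ref{A:Thm1}. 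Since $\Lmax^{012}\Lmin^{012}\Lmed^{012} = L_0 L_1 L_2$, the right side equals $\left( N_0 L_0 L_1 L_2 \right)^{1/2}\norm{u_1}\norm{u_2}$, which is the claimed bound.

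\textbf{Remark on the obstacle.} The only point requiring care is the pointwise domination step: one must be sure that the insertion of $\theta_{12}$ into the convolution \eqref{A:130} is harmless, i.e. that $\bigl| \mathcal F(\nullform(u_1,u_2))(X_0) \bigr| \le C \bigl( \Lmax^{012}/\Nmin^{12}\bigr)^{1/2} \bigl| \widehat{u_1 u_2}(X_0) \bigr|$ whenever $X_0 \in K^{\pm_0}_{N_0,L_0}$, pointwise in $X_0$, after which the $L^2$ bound transfers verbatim. This is immediate from nonnegativity of $\theta_{12}$ and of $|\widehat{u_j}|$ together with the bound on $\theta_{12}$ derived above from Lemma \ref{I:Lemma1}; no use of the more refined parts \eqref{I:16} or \eqref{G:12} of that lemma is needed here.
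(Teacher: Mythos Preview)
Your approach is exactly the paper's: combine the angle bound \eqref{I:10} (giving $\theta_{12} \lesssim (\Lmax^{012}/\Nmin^{12})^{1/2}$ on the relevant interaction region) with the standard bilinear estimate \eqref{A:114}, and use $\Lmax^{012}\Lmed^{012}\Lmin^{012}=L_0L_1L_2$.

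One small wording issue: the pointwise inequality you display in the Remark, $\bigl|\mathcal F(\nullform(u_1,u_2))(X_0)\bigr| \lesssim (\Lmax^{012}/\Nmin^{12})^{1/2}\bigl|\widehat{u_1u_2}(X_0)\bigr|$, is not literally true for general $u_j$, since cancellation in the convolution defining $\widehat{u_1u_2}$ need not match that in $\mathcal F\nullform$. The correct (and standard) fix is to replace $u_j$ by $\tilde u_j$ with $\widehat{\tilde u_j}=|\widehat{u_j}|$; then the pointwise bound holds with $\tilde u_1\tilde u_2$ on the right, $\|\tilde u_j\|=\|u_j\|$, and \eqref{A:114} applies to $\tilde u_1\tilde u_2$ just as well. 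Your phrase ``nonnegativity of $|\widehat{u_j}|$'' should read ``we may assume $\widehat{u_j}\ge 0$,'' which is how the paper routinely sets things up.
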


In effect, the null symbol $\theta_{12}$ allows us to replace one of the elliptic weights in \eqref{A:114} with a hyperbolic weight, which is good if we are close to a null interaction.

We now present two new null form estimates proved in this paper.

\begin{theorem}\label{N:Thm2} Given $r > 0$ and $\omega \in \mathbb S^2$, let $T_r(\omega) \subset \R^3$ be the tube of radius $r$ around the axis $\R\omega$. Then, assuming $u_1,u_2$ satisfy \eqref{A:120}, 
$$
  \norm{\nullform(\Proj_{\R \times T_r(\omega)} u_1,u_2)}
  \lesssim \left( r^2 L_1 L_2 \right)^{1/2}
  \norm{u_1}\norm{u_2}.
$$
\end{theorem}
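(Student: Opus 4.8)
\textbf{Proof proposal for Theorem \ref{N:Thm2}.}

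The plan is to combine the null-form gain encoded in Lemma \ref{I:Lemma1} with the anisotropic estimate of Theorem \ref{Z:Thm1}, exploiting the fact that the tube $T_r(\omega)$ is simultaneously thin in the two directions orthogonal to $\omega$. First I would dispose of the trivial regime: if $\Nmin^{12} \lesssim r$, then bounding the null symbol by $\theta_{12} \lesssim 1$ and applying the plain bilinear estimate \eqref{A:110} gives $\norm{\nullform(\cdot,u_2)} \lesssim (\Nmin^{012}\Nmin^{12}L_1L_2)^{1/2}\norm{u_1}\norm{u_2} \lesssim (r^2 L_1 L_2)^{1/2}\norm{u_1}\norm{u_2}$ after also using $\Nmin^{012} \le \Nmin^{12} \lesssim r$; so from now on assume $r \ll \Nmin^{12}$, which is the interesting case and the one that forces us to use the tube restriction nontrivially.

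With $\widehat{u_1}$ supported in $\R\times T_r(\omega)$, the spatial frequency $\xi_1$ satisfies $\abs{\xi_1 - (\xi_1\cdot\omega)\omega} \le r$, i.e. $\xi_1$ is within distance $r$ of the axis $\R\omega$. Since also $\abs{\xi_1}\sim N_1$ on the support, this means the angle $\theta(\pm_1\xi_1,\pm\omega)$ between $\xi_1$ and the axis is $\lesssim r/N_1$ for one of the orientations $\pm$; fix that orientation and call the direction $\omega$ still. I would then split into the two cases of Lemma \ref{I:Lemma1}. If \eqref{I:12} holds (low output, equal signs), Lemma \ref{I:Lemma1} gives $\theta_{12}\sim 1$, so the null symbol does nothing for us directly; but in that case $\abs{\xi_1}\sim\abs{\xi_2}$ and $\xi_0 = \xi_1+\xi_2$, and I would instead use that $u_1$ lives in a tube to bound things by a tube-version of \eqref{X:20}: restricting $u_1$'s spatial frequency to a ball of radius $\sim r$ (after further decomposing $T_r(\omega)$ into $\sim N_1/r$ balls of radius $r$ along the axis, which are almost orthogonal for $u_2$ and for the output by the tiling argument mentioned after \eqref{X:20}) yields the $(r\Nmin^{12}L_1L_2)^{1/2}$ bound, and since in this regime $r \ll \Nmin^{12}$ is what we want to beat — wait, that is worse than $r^2$. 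So in the low-output equal-sign case I should instead observe $\theta_{12}\sim 1$ is compatible with $u_1$ in a tube only if $\xi_2$ is also nearly along $\omega$ up to angle $\sim 1$, which is no restriction; the right move here is: $\theta_{12}\sim\theta(\xi_1,-\xi_2)$, and since $\xi_1$ makes angle $\lesssim r/N_1$ with the axis while $\theta_{12}\sim 1$, the vector $\xi_2$ makes angle $\sim 1$ with the axis, so $\xi_0=\xi_1+\xi_2$ has $\abs{\xi_0}\sim\abs{\xi_2}\sim N_1$, contradicting $\abs{\xi_0}\ll\abs{\xi_1}$. Hence \eqref{I:12} cannot occur when $u_1$ is supported in a thin tube with $r\ll N_1$, and we are always in the regime where \eqref{I:16} holds.

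Thus $\theta_{12}^2 \lesssim \abs{\xi_0}\max(\abs{\hypwt_0},\abs{\hypwt_1},\abs{\hypwt_2})/(\abs{\xi_1}\abs{\xi_2})$ on the support of the convolution. Now I would estimate $\abs{\xi_0}$ using $\xi_0\cdot\omega = \xi_1\cdot\omega + \xi_2\cdot\omega$: writing $\theta_{12}\lesssim \theta(\pm_1\xi_1,\omega)+\theta(\omega,\pm_2\xi_2)\lesssim r/N_1 + \theta(\omega,\pm_2\xi_2)$, and combining with the elementary bound $\abs{\xi_0}\theta_{12}^2 \lesssim \Nmin^{12}\theta_{12}^2 \cdot (\abs{\xi_0}/\Nmin^{12})$ — the cleanest route is: the null symbol $\theta_{12}$ together with the tube restriction on $u_1$ forces $u_1u_2$ (hence $\nullform(u_1,u_2)$) to have its output spatial frequency $\xi_0$ concentrated near the thickened plane $\xi_0\cdot\omega = $ const up to thickness $\sim r$ (because $\xi_1$ is within $r$ of the axis and $\theta_{12}$ small forces $\xi_2$ near the axis too, of the same or opposite orientation), with an effective angular transversality $\alpha \sim \theta_{12}$; one then decomposes dyadically in $\theta_{12}\sim 2^{-k}$, applies Theorem \ref{Z:Thm1} on each piece with $\abs{I}\sim r$ and $\alpha\sim 2^{-k}$ to get $(r\Nmin^{12}L_1L_2/2^{-k})^{1/2}$, absorbs one factor of $\theta_{12}\sim 2^{-k}$ from the null symbol, giving $2^{-k}(r\Nmin^{12}L_1L_2\,2^{k})^{1/2} = 2^{-k/2}(r\Nmin^{12}L_1L_2)^{1/2}$, and sums the geometric series in $k$ — but the range of $k$ is bounded below by $2^{-k}\gtrsim r/N_1$ (the tube angle), so the largest term is at $2^{-k}\sim r/\Nmin^{12}$, giving $(r/\Nmin^{12})^{1/2}(r\Nmin^{12}L_1L_2)^{1/2} = (r^2 L_1 L_2)^{1/2}$, exactly the claimed bound, and the geometric sum over larger $k$ only improves this. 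I expect the main obstacle to be making precise the claim that "$\theta_{12}$ small plus $u_1$ in a thin tube implies $\xi_0$ lies in a thin slab $\xi_0\cdot\omega\in I$ with $\abs{I}\sim r$ while the transversality hypothesis \eqref{Z:10} for Theorem \ref{Z:Thm1} holds with $\alpha\sim\theta_{12}$"; this requires tracking that $\xi_2$ is pinned near $\pm\omega$ within angle $\sim\theta_{12}$, so that $\xi_0=\xi_1+\xi_2$ deviates from the axis direction by at most $\sim r/N_1 + \theta_{12} \lesssim \theta_{12}$, and hence $\theta(\xi_0,\omega^\perp)\gtrsim 1 - \theta_{12} \sim 1$ — so in fact $\alpha\sim 1$ and one simply applies Theorem \ref{Z:Thm1} with $\abs{I}\sim r$ and $\alpha\sim 1$, getting $(r\Nmin^{12}L_1L_2)^{1/2}$ on the $\theta_{12}\sim 2^{-k}$ piece, times the symbol gain $2^{-k}$, and summing over $2^{-k}\gtrsim r/\Nmin^{12}$ — here the top term is $k$ minimal, i.e. $2^{-k}\sim 1$, giving $(r\Nmin^{12}L_1L_2)^{1/2}$, which is too big; so one genuinely needs the $\alpha\sim\theta_{12}$ version, placing the slab $\xi_0\cdot\omega\in I$ not along $\omega$ but along a direction tilted by $\sim\theta_{12}$, and I would set this up by choosing the slab direction to be $\pm_2\xi_2/\abs{\xi_2}$-adapted, making $\abs{I}\sim r$ (from the $u_1$ tube) and the transversality for $u_1$ equal to $\theta(\xi_1,(\xi_2)^\perp)\sim\theta_{12}$. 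Getting this geometry exactly right, and checking the lower cutoff $2^{-k}\gtrsim r/N_1$ on the angular sum, is where the real work lies; everything else is the dyadic-summation bookkeeping already standard in this circle of ideas.
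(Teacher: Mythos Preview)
Your reduction to Theorem \ref{Z:Thm1} does not go through, because the geometry is backwards. You want to choose the slab direction $\omega' = \pm_2\xi_2/\abs{\xi_2}$ and claim that the transversality parameter for $u_1$ is $\alpha \sim \theta_{12}$; but $\theta(\xi_1,\omega'^\perp) = \abs{\pi/2 - \theta(\pm_1\xi_1,\omega')}$, and since $\theta(\pm_1\xi_1,\omega') \sim \theta_{12} \ll 1$ this gives $\alpha \sim 1$, not $\alpha \sim \theta_{12}$. Likewise the claim $\abs{I} \sim r$ fails: with $\omega'$ within angle $\sim\gamma$ of $\omega$, the projection $\xi_1\cdot\omega'$ still ranges over an interval of length $\sim N_1$, not $r$. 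So Theorem \ref{Z:Thm1} only yields $(r\Nmin^{12}L_1L_2)^{1/2}$ on each $\theta_{12}\sim\gamma$ piece, and after multiplying by the symbol $\gamma$ and summing over dyadic $\gamma \ge r/\Nmin^{12}$ you are stuck with the top term $\gamma\sim 1$, which is too big. The estimate you actually need on each angular piece is
\[
  \bignorm{\Proj_{\R \times T_r(\omega)} u_1^{\gamma,\omega_1} \cdot u_2^{\gamma,\omega_2}}
  \lesssim \left(\frac{r^2 L_1 L_2}{\gamma^2}\right)^{1/2}\norm{u_1^{\gamma,\omega_1}}\norm{u_2^{\gamma,\omega_2}},
\]
with the crucial $\gamma^{-2}$ rather than $\gamma^{-1}$. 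This is a direct volume bound, not a consequence of the anisotropic slab estimate: with $e_1 = \pm_1\xi/\abs{\xi}$ and $e_2 = \pm_2(\xi_0-\xi)/\abs{\xi_0-\xi}$, one has $\theta(e_1,\omega)\lesssim r/N_1 \ll \gamma$ while $\theta(e_2,\omega)\gtrsim\gamma$ by the sector separation, so $\nabla f(\xi)\cdot\omega = \cos\theta(e_1,\omega)-\cos\theta(e_2,\omega) \gtrsim \gamma^2$; integrating first in $\tau$, then along $\omega$, and finally over the disk $P_{\omega^\perp}(T_r(\omega))$ of area $\lesssim r^2$ gives the bound.

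Two further points. First, your dismissal of the low-output equal-sign case \eqref{I:12} is incorrect: when $\pm_1=\pm_2$ one has $\theta_{12}=\theta(\xi_1,\xi_2)$, and $\xi_1$ near $\omega$ with $\xi_2$ near $-\omega$ (so $\theta_{12}$ close to $\pi$, still ``$\sim 1$'') gives $\abs{\xi_0}$ arbitrarily small, so this interaction does occur. Second, even with the correct $\gamma^{-2}$ piecewise bound, multiplying by the null symbol $\gamma$ leaves $\sum_\gamma 1$, a logarithmic divergence. The paper removes this by passing to the dual trilinear form, using Lemma \ref{I:Lemma1} to localize $u_0$ to $\abs{\hypwt_0}\sim\Nmin^{12}\gamma^2$ (or $N_1N_2\gamma^2/\abs{\xi_0}$), and exploiting the resulting almost-orthogonality in $\gamma$; the fact that $\xi_1\in T_r(\omega)$ pins $\omega_1$ (and hence $\omega_2$) essentially uniquely for each $\gamma$ is what makes this work in the $(+,-)$, $N_1\sim N_2$ case, where the usual $(+,+)$ trick \eqref{K:80} is unavailable.
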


The proof is given in section \ref{E}. This estimate is optimal up to an absolute factor, as can be seen by testing it on \eqref{Z:16}, with $r \sim N^{1/2}$.

The key point in the above result is that we are able to exploit concentration of the Fourier supports along null rays; for a standard product such concentrations cannot give any improvement, since in the worst case the thickened cones already intersect along a null ray (approximately, assuming $L_1, L_2$ small relative to $N_1, N_2$).

We shall also prove the following related result, where instead of a tube we have a ball. This situation is of course much better, and we can in fact replace the symbol $\theta_{12}$ by $\sqrt{\theta_{12}}$; the corresponding null form is denoted $\nullformhalf$. The following should be compared with \eqref{X:20}. 

\begin{theorem}\label{N:Thm3} Given $r > 0$, let $B \subset \R^3$ be a ball of radius $r$, with arbitrary center. Then, assuming $u_1,u_2$ satisfy \eqref{A:120},  
$$
  \norm{\nullformhalf(\Proj_{\R \times B} u_1,u_2)}
  \lesssim \left( r^2 L_1 L_2 \right)^{1/2}
  \norm{u_1}\norm{u_2}.
$$
\end{theorem}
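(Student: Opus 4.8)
The plan is to reduce to the volume estimate in Theorem \ref{Z:Thm2} by the same method used for Theorem \ref{Z:Thm1}, but now exploiting the extra factor $\sqrt{\theta_{12}}$ to absorb an angular loss and thereby handle a ball rather than a thickened plane. First I would set up the general bilinear machinery from section \ref{B}: writing out $\norm{\nullformhalf(\Proj_{\R\times B}u_1,u_2)}^2$ via \eqref{A:130} and Cauchy--Schwarz, the estimate follows if one can bound, for each output point $X_0=(\tau_0,\xi_0)$, the measure of the set of $X_1=(\tau_1,\xi_1)$ with $X_1\in\supp\widehat{u_1}$, $\xi_1\in B$, $X_2=X_0-X_1\in\supp\widehat{u_2}$, weighted by $\theta_{12}$, and symmetrically the measure of the set of $X_2$'s — here the weight $\sqrt{\theta_{12}}$ gets split as $\theta_{12}^{1/2}\cdot\theta_{12}^{1/2}$, one factor going to each side, so effectively on each side we integrate $\theta_{12}\,dX_1$ over the intersection region. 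The three defining constraints $\abs{h_1}\le L_1$, $\abs{h_2}\le L_2$, $\abs{\xi_1}\in\Delta B_{N_1}$ together with $\xi_1\in B$ confine $\xi_1$ to (a thickening, in $\tau_1$, of) the intersection of a thickened quadric of revolution, a ball of radius $r$, and the annulus — the quadric being an ellipsoid if $\pm_1=\pm_2$ and a hyperboloid of revolution if $\pm_1\ne\pm_2$, with $\xi_0$ (or $-\xi_0$) at a focus, exactly the geometry of Theorem \ref{Z:Thm2}.

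Next I would bring in Lemma \ref{I:Lemma1}. In the low-output case with $\pm_1=\pm_2$ (condition \eqref{I:12}) we have $\theta_{12}\sim1$ and the claim is essentially \eqref{X:20} with $r^2$ in place of $r\Nmin^{12}$ — but note $r\le\Nmin^{12}$ is forced for the intersection to be nonempty in the low-output regime (otherwise $\xi_0=\xi_1+\xi_2$ cannot be small), so $r^2\lesssim r\Nmin^{12}$ and this case follows from \eqref{X:20} directly; I would dispose of it first. In all other cases \eqref{I:16} gives $\theta_{12}^2\lesssim\abs{\xi_0}\max(\abs{h_0},\abs{h_1},\abs{h_2})/(\abs{\xi_1}\abs{\xi_2})$, i.e. $\theta_{12}\lesssim(\abs{\xi_0}/(N_1N_2))^{1/2}\Lmax^{012,1/2}$ on the support (using $\abs{h_j}\le L_j$ for $j=1,2$ and that $h_0$ ranges over an interval of length comparable to the relevant $L$); pairing this bound on one copy of $\theta_{12}^{1/2}$ with a brutal $\theta_{12}^{1/2}\lesssim1$ on the other, the weighted measure of the $\xi_1$-region is controlled by $(\abs{\xi_0}/(N_1N_2))^{1/2}$ times the unweighted measure, which by Theorem \ref{Z:Thm2} (applied with $b^2/a\sim\abs{\xi_0}$-ish parameters, so that the lower constraint in \eqref{Z:20} is exactly the geometric nonemptiness condition and the upper one is harmless after a further dyadic split in $\abs{h_0}$) is $\lesssim r\cdot L$, where $L$ is the thickness coming from the $h$-constraints. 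Multiplying the two sides and collecting the $L$- and $N$-powers should produce exactly $r^2L_1L_2$, with all $N$-dependence cancelling — this cancellation is the structural reason the null form lets a ball replace a plane.

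The main obstacle I expect is bookkeeping the case distinctions cleanly: matching the ellipsoid/hyperboloid parameters $(a,b)$ and the focus-radius $R$ of Theorem \ref{Z:Thm2} to $N_1,N_2,L_1,L_2,\abs{\xi_0}$ in each of the sign configurations and output regimes, verifying in each that $b^2/a\lesssim R\lesssim a$ holds (or that the region is empty), and checking that after dualizing — putting $\Proj_{\R\times B}$ on $u_2$ instead, via a tiling argument as for Theorem \ref{Z:Thm1} — one still lands in the hypotheses of Theorem \ref{Z:Thm2}. The honest difficulty is entirely in this geometric reduction; once the quadric/ball/annulus intersection is identified, Theorem \ref{Z:Thm2} does the real work and the factor $\sqrt{\theta_{12}}$ is precisely strong enough (and no stronger) to convert the focus-centered ball estimate $R\delta$ into the claimed bound, which is why $\theta_{12}$ itself would not suffice here and why $\nullformhalf$ appears.
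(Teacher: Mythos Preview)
Your proposed route through Theorem \ref{Z:Thm2} does not go through, for a structural reason. The quadric $S_0=\{f(\xi)=\tau_0\}$ with $f(\xi)=\pm_1\abs{\xi}\pm_2\abs{\xi_0-\xi}$ has foci at $0$ and $\xi_0$, but the ball $B$ in Theorem \ref{N:Thm3} has \emph{arbitrary} center; Theorem \ref{Z:Thm2} explicitly requires $B$ to be centered at a focus, and this hypothesis is genuinely used throughout its proof (see e.g.\ the reduction \eqref{G:102}, \eqref{G:106}, and the case split \eqref{G:174}--\eqref{G:170}). Moreover, Theorem \ref{Z:Thm2} bounds $\sigma(S\cap B\cap P_\delta)$ where $P_\delta$ is a thickened \emph{plane}; there is no such plane in the geometry of Theorem \ref{N:Thm3}, and your sketch does not explain where one would come from. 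The sentence ``by Theorem \ref{Z:Thm2} \dots\ is $\lesssim r\cdot L$'' is therefore not supported. Your side remark that ``$r\le\Nmin^{12}$ is forced for the intersection to be nonempty in the low-output regime'' is also incorrect: $r$ is the radius of a ball constraining $\xi_1$, and nothing prevents $r\gg\Nmin^{12}$ (though in that case \eqref{X:18} already gives the claim, so the reduction is harmless for a different reason than the one you state).

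The paper's argument is both simpler and entirely different. It mimics the proof of Theorem \ref{N:Thm2}: decompose dyadically in the angle $\theta_{12}\sim\gamma$ via Lemma \ref{D:Lemma2}, so that the symbol $\sqrt{\theta_{12}}$ becomes $\sqrt{\gamma}$, and then prove the two localized estimates
\[
  \bignorm{\Proj_{\R\times B}u_1^{\gamma,\omega_1}\cdot u_2^{\gamma,\omega_2}}
  \lesssim C\,\bignorm{u_1^{\gamma,\omega_1}}\bignorm{u_2^{\gamma,\omega_2}},
  \qquad C^2\sim \frac{r^2L_1L_2}{\gamma}
  \quad\text{and}\quad
  C^2\sim r^3\Lmin^{12}.
\]
These reduce (via Lemma \ref{B:Lemma1} and \eqref{K:16}) to elementary volume bounds: since $\Gamma_\gamma(\omega_1)$ and $\Gamma_\gamma(\omega_2)$ are $\gamma$-separated one can choose coordinates with $\partial_1 f\sim\gamma$ on the relevant region $R\subset B$, and then integration in $\xi^1$ followed by the trivial $r^2$ bound on the projection of $B$ onto the $(\xi^2,\xi^3)$-plane gives $\abs{E}\lesssim r^2L_1L_2/\gamma$; the second bound is just $\abs{R}\lesssim r^3$. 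No quadric geometry and no Theorem \ref{Z:Thm2} are needed. The point is that the $\sqrt{\theta_{12}}$ weight exactly cancels the $\gamma^{-1/2}$ loss coming from the transversality estimate, leaving the $r^2L_1L_2$ bound after summing in $\gamma$ with the same orthogonality mechanism as in subsection \ref{E:2}.
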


Again this is optimal, as can be seen by testing it on the modification of \eqref{Z:16} where we shorten the $\xi$-supports to a length $r \sim N^{1/2}$ in the radial direction.

We remark that since the center of the ball $B$ is arbitrary, the theorem still holds if $\Proj_{\R \times B}$ is placed outside the product, by a tiling argument. (This would not work for Theorem \ref{N:Thm2}, since there we need the tube to pass through the origin.)

\subsection{Concentration/nonconcentration null form estimate}\label{Z}

By analogy with \eqref{X:18} and \eqref{X:20}, the question naturally arises whether we can see an improvement in Theorem \ref{N:Thm2} if we restrict the spatial output frequency $\xi_0$ to a ball $B \subset \R^3$ of radius $\delta$ and arbitrary center. Thus, we consider
\begin{equation}\label{N:100}
  \norm{\Proj_{\R \times B} \nullform(\Proj_{\R \times T_r(\omega)} u_1,u_2)}.
\end{equation}
We obviously get an improvement if $\delta \lesssim r$, since then we can apply Theorem \ref{N:Thm3}, obtaining $\eqref{N:100} \lesssim (\delta^2L_1L_2)^{1/2}\norm{u_1}\norm{u_2}$. So let us assume $B$ has radius $\delta \gg r$. Then Fourier restriction to $B$ will have no effect in directions perpendicular to $\omega$, so we may as well replace \eqref{N:100} by
\begin{equation}\label{N:100:2}
  \norm{\Proj_{\xi_0 \cdot \omega \in I_0} \nullform(\Proj_{\R \times T_r(\omega)} u_1,u_2)},
\end{equation}
where $I_0 \subset \R$ is a compact interval of length $\abs{I_0} = \delta$. Now we test this on \eqref{Z:16}, where we recall that $N_1 \sim N_2 \sim N$. Taking $r \sim N^{1/2}$ and $N^{1/2} \ll \delta \lesssim N$, we observe an improvement by a factor $(\delta/N)^{1/2}$ over the estimate in Theorem \ref{N:Thm2}:
$$
  \eqref{N:100:2} \lesssim \left(r^2L_1L_2\right)^{1/2}
  \left(\frac{\delta}{N_1}\right)^{1/2}\norm{u_1}\norm{u_2}
  \qquad \text{in \eqref{Z:16}}.
$$
On the other hand,\footnote{In general, this holds holds with $\sim$ replaced by $\lesssim$.}
$$
  \left(\frac{\delta}{N_1}\right)^{1/2}
  \norm{u_1}
  \sim \sup_{I_1} \norm{\Proj_{\xi_1 \cdot \omega \in I_1} u_1}
  \qquad \text{in \eqref{Z:16}},
$$
where the supremum is over all translates $I_1$ of $I_0$. This shows that the following result is optimal, up to an absolute factor. Here we assume $r \ll \Nmin^{12}$, since Theorem \ref{Z:Thm1} is the natural result to use if $r \gtrsim \Nmin^{12}$. Moreover, we shall limit attention to interactions which are nearly null, by restricting the symbol in \eqref{N:10} to $\theta_{12} \ll 1$; we denote this modified null form by $\truenullform$.
 
\begin{theorem}\label{N:Thm4}
Let $r > 0$, $\omega \in \mathbb S^2$ and $I_0 \subset \R$ a compact interval. Assume that $u_1,u_2$ satisfy \eqref{A:120}, and that $r \ll \Nmin^{12}$. Then
$$
  \norm{\Proj_{\xi_0 \cdot \omega \in I_0} \truenullform(\Proj_{\R \times T_r(\omega)} u_1,u_2)}
  \lesssim \left( r^2 L_1 L_2 \right)^{1/2} 
  \left( \sup_{I_1} \norm{\Proj_{\xi_1 \cdot \omega \in I_1} u_1} \right)
  \norm{u_2},
$$
where the supremum is over all translates $I_1$ of $I_0$.
\end{theorem}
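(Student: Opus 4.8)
To prove Theorem~\ref{N:Thm4} the plan is to split the first factor along the $\omega$-direction into frequency slabs of the same width as $I_0$, reduce to a slab-localised refinement of Theorem~\ref{N:Thm2}, and then reassemble. Put $\delta=\abs{I_0}$ and partition $\R$ into consecutive intervals $\{I_1^k\}_k$ of length $\delta$, each a translate of $I_0$; set $u_1^k=\Proj_{\xi_1\cdot\omega\in I_1^k}u_1$. Since $\xi_0=\xi_1+\xi_2$ on the Fourier support of the null form, the conditions $\xi_0\cdot\omega\in I_0$ and $\xi_1\cdot\omega\in I_1^k$ force $\xi_2\cdot\omega$ into an interval $I_2^k$ of length $\le 2\delta$; these intervals have bounded overlap, so with $u_2^k=\Proj_{\xi_2\cdot\omega\in I_2^k}u_2$ we get
\[
\Proj_{\xi_0\cdot\omega\in I_0}\truenullform(\Proj_{\R\times T_r(\omega)}u_1,u_2)=\sum_k F_k,\qquad F_k:=\Proj_{\xi_0\cdot\omega\in I_0}\truenullform(u_1^k,u_2^k),
\]
and $\sum_k\norm{u_2^k}^2\lesssim\norm{u_2}^2$. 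By the triangle inequality and then Cauchy--Schwarz in $k$, it suffices to establish a per-slab estimate $\norm{F_k}\le C_k\norm{u_1^k}\norm{u_2^k}$ with $\sum_k C_k^2\lesssim r^2L_1L_2$: then $\norm{\sum_k F_k}\le(\sup_j\norm{u_1^j})\,(\sum_k C_k^2)^{1/2}(\sum_k\norm{u_2^k}^2)^{1/2}$, and $\sup_j\norm{u_1^j}\le\sup_{I_1}\norm{\Proj_{\xi_1\cdot\omega\in I_1}u_1}$.

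Two points make this delicate. First, one may \emph{not} simply invoke Theorem~\ref{N:Thm2} for each $F_k$, which would give $C_k\sim(r^2L_1L_2)^{1/2}$: because the output constraint $\xi_0\cdot\omega\in I_0$ together with the hypotheses $r\ll\Nmin^{12}$ and $\theta_{12}\ll1$ pin $\xi_1$, hence also $\xi_2$, near the axis $\R\omega$, only $\sim\Nmin^{12}/\delta$ slabs $k$ actually contribute, and $\sum_k C_k^2$ would then exceed the target by that factor. Second, the $F_k$ are \emph{not} almost orthogonal --- in the extremal configuration \eqref{Z:16} they are essentially parallel, so the triangle inequality above is sharp --- hence no $\ell^2$ reassembly can be substituted; the entire gain must be extracted inside the per-slab estimate, from the fact that the spatial support of $u_1^k$ is a short piece of the tube (width $\delta$ along $\R\omega$, width $r$ transverse) and that $u_2^k$ is likewise confined to a slab. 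I would prove the per-slab estimate by the duality/volume scheme of Section~\ref{B}, just as Theorems~\ref{N:Thm2} and \ref{Z:Thm1} are proved: decompose the null symbol dyadically, $\theta_{12}\sim\mu$ with $\mu\ll1$, so that the symbol becomes the constant $\mu$ while the angular separation $\theta(\pm_1\xi_1,\pm_2\xi_2)\sim\mu$ is retained; use Lemma~\ref{I:Lemma1} (together with \eqref{I:2}, \eqref{I:4}) to confine the output to a thickened null cone and to organise the case analysis according to \eqref{A:140} and the signs $\pm_1=\pm_2$ versus $\pm_1\neq\pm_2$ (the hypothesis $\theta_{12}\ll1$ already disposes of $\pm_1=\pm_2$ with low output, where $\theta_{12}\sim1$); perform the standard angular decomposition into caps of radius $\sim\mu$, observing that since $\xi_1$ lies in the tube it, and then $\xi_2$, lies within angle $\sim r/N_1$ of $\pm\omega$, which bounds the number of relevant caps; and estimate each elementary piece by a thickened-sphere intersection bound (Lemma~\ref{C:Lemma1}) or, when a thickened plane is the binding constraint, by Theorem~\ref{Z:Thm2}, the slab widths cutting down the radial extent of the pieces. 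Summing over caps and over $\mu$ yields $C_k$; the target $\sum_k C_k^2\lesssim r^2L_1L_2$ is consistent with \eqref{Z:16}, where $\sim\Nmin^{12}/\delta$ slabs contribute, each with $C_k^2\sim r^2\delta/\Nmin^{12}$.

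The main obstacle is this per-slab estimate and, above all, the bookkeeping of how $C_k$ depends on the location of the slab $I_1^k$, on the dyadic angle $\mu$, and on the ordering of $N_1,N_2$, so that the sum over $k$ reproduces exactly $r^2L_1L_2$. The hypotheses $r\ll\Nmin^{12}$ and $\theta_{12}\ll1$ are what make this work: together they force $\xi_1$ and $\xi_2$ to concentrate near $\R\omega$, so that on each slab the radial coordinate $\xi\cdot\omega$ is comparable to the elliptic weight $\abs{\xi}$ and the relevant intersections become near-tangential --- precisely the situation that Theorem~\ref{Z:Thm2} is designed to control. Everything else --- the slab decomposition and reassembly of the first step, the dyadic and angular decompositions, and the sign and output cases --- is routine once this quantitative geometry is in place.
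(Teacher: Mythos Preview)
Your reassembly requires $\sum_k C_k^2\lesssim r^2L_1L_2$, where $C_k$ is the \emph{worst-case} constant in $\|F_k\|\le C_k\|u_1^k\|\|u_2^k\|$. The volume/duality methods you describe cannot deliver this. At angular scale $\mu$, the proof of \eqref{E:20} gives $|E|\lesssim r^2L_1L_2/\mu^2$ by integrating along $\omega$ (using $\nabla f\cdot\omega\sim\mu^2$); the slab restriction $\xi_1\cdot\omega\in I_1^k$ can only cap that integration at $\delta$, but the cap is inactive whenever $\Lmax^{12}/\mu^2\le\delta$, which is exactly the dominant range after the null-form weight $\mu$ is inserted. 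The transverse alternative (as in \eqref{J:230}) gives $r\delta L_1L_2/\mu$, worse for $\mu\gtrsim r/\delta$. After multiplying by $\mu$ and summing (even with the $\abs{\hypwt_0}$-orthogonality of \eqref{K:66}), you recover only $C_k\lesssim(r^2L_1L_2)^{1/2}$, uniformly in $k$ and with no factor $\delta/\Nmin^{12}$. Since $\sim\Nmin^{12}/\delta$ slabs contribute, your sum $\sum_k C_k^2$ is too large by exactly that factor. Your check against \eqref{Z:16} shows the target $C_k^2\sim r^2\delta/\Nmin^{12}$ is \emph{necessary} for the scheme, not that the tools you name reach it; indeed, applying Theorem~\ref{N:Thm4} itself to a single-slab $u_1$ yields only $C_k^2\lesssim r^2L_1L_2$, so the per-slab bound you need is strictly stronger than the theorem and cannot be a mere corollary of the standard volume machinery.

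The paper does not pass through per-slab constants on $u_1$. After the angular decomposition at dyadic scale $\gamma$, it proves directly an estimate \eqref{J:220} that already carries $\sup_{I_1}\|u_1^{I_1}\|$ on the right; the $\gamma$-sum is then closed by orthogonality in $u_2$ (using that the tube pins $\omega_1$, hence $\theta(\pm_2\xi_2,\omega)\sim\gamma$). The proof of \eqref{J:220} tiles $u_2$ --- not $u_1$ --- by translates of the four-dimensional set $T_1=H_d(\omega)\cap(\R\times T_r(\omega))$, a thickened null hyperplane intersected with the space-time tube (Lemmas~\ref{J:Lemma1}, \ref{J:Lemma2}). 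The decisive geometric input is then a bound on the $\tau$-extent $|J|$ of $K^{\pm_2}_{N_2,L_2,\gamma,\omega_2}\cap T_2$ for each tile $T_2$: a genuinely space-time argument, obtained by viewing a slice $\tau=\text{const}$ as the intersection of a thickened sphere and a thickened plane that drift through one another with relative speed $\sim\gamma^2$, so $|J|\lesssim\max(r/\gamma,\Lmax^{12}/\gamma^2)$. Combined with the dyadic bound \eqref{J:230} via Lemma~\ref{J:Lemma1}, this gives $C^2\sim r^2L_1L_2/\gamma^2$. Theorem~\ref{Z:Thm2} plays no role here; it enters only through \eqref{J:202}, used in the easy small-$\gamma$ regime.
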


The proof is given in section \ref{J}.

Note that Theorem \ref{N:Thm4} is better than Theorem \ref{N:Thm2} if the the $\omega$-component of the spatial Fourier support of $u_1$ is not highly concentrated. Thus, a unidirectional concentration of the output frequency $\xi_0$ leads to an improvement if either of the input frequencies $\xi_1,\xi_2$ exhibits nonconcentration for the same direction.

The restriction to interactions with $\theta_{12} \ll 1$ is probably not essential, but we do not pursue this issue here.

We remark that if $u_1$ is itself a product, then Theorem \ref{N:Thm4} may be applied to good effect in combination with Theorem \ref{Z:Thm1}.

\subsection{A nonconcentration low output estimate}\label{L}

In the case $N_0 \ll N_1 \sim N_2$ (low output), \eqref{A:114} says that
$$
  \Bignorm{\Proj_{K^{\pm_0}_{N_0,L_0}}\!\!(u_1u_2)}
  \lesssim \left( N_0 N_1 \Lmin^{012} \Lmed^{012} \right)^{1/2} \norm{u_1} \norm{u_2}.
$$
In general this is optimal, as can be seen by testing it on functions which concentrate along a null ray in Fourier space, but one may hope to do better if the Fourier supports are less concentrated. To detect radial nonconcentration we introduce a modified $L^2$ norm as follows. Let
$$
  \Omega(\gamma) \subset \mathbb S^2 \qquad (0 < \gamma \le \pi )
$$
be a maximal $\gamma$-separated subset of the unit sphere $\mathbb S^2$. Since the cardinality of $\Omega(\gamma)$ is comparable to $1/\gamma^2$, we see that for any $N,r > 0$,
\begin{equation}\label{L:10}
\begin{aligned}
  \norm{u}
  &\sim \left( \sum_{\omega \in \Omega(\frac{r}{N})} \norm{\Proj_{\Delta B_N \cap T_r(\omega)} u}^2 \right)^{1/2}
  \\
  &\lesssim
  \norm{u}_{N,r}
  \equiv
  \frac{N}{r}
  \sup_{\omega \in \mathbb S^2}
  \norm{\Proj_{\Delta B_N \cap T_r(\omega)} u},
\end{aligned}
\end{equation}
and the less radial concentration we have in the spatial Fourier support, the closer the two norms are to being comparable. In the extreme case of spherical symmetry in $\xi$, we have $\norm{u} \sim \norm{u}_{N,r}$. 

We then have the following result.

\begin{theorem}\label{L:Thm1} Assume $N_0 \ll N_1 \sim N_2$, and define $r = (N_0\Lmax^{012})^{1/2}$. Assume as usual that $u_1,u_2 \in L^2(\R^{1+3})$ satisfy \eqref{A:120}. Then
$$
  \Bignorm{\Proj_{K^{\pm_0}_{N_0,L_0}}\!\!(u_1u_2)}
  \lesssim \left( N_0 L_0 L_1 L_2 \right)^{1/2} \norm{u_1}
  \norm{u_2}_{N_2,r}.
$$
In other words,
$$
  \Bignorm{\Proj_{K^{\pm_0}_{N_0,L_0}}\!\!(u_1u_2)}
  \lesssim \left( N_1^2 \Lmin^{012} \Lmed^{012} \right)^{1/2} \norm{u_1}
  \sup_{\omega \in \mathbb S^2} \norm{\Proj_{\R \times T_{r}(\omega)}u_2}.
$$
\end{theorem}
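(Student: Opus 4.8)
The plan is to reduce the theorem to Theorem \ref{N:Thm2} by means of an angular decomposition of $u_2$, together with an orthogonality argument organizing the output over a family of tubes. Since $N_0 \ll N_1 \sim N_2 \equiv N$, we are in the low output case, so $\abs{\xi_0} \ll \abs{\xi_1} \sim \abs{\xi_2}$, and for any bilinear interaction contributing to $\Proj_{K^{\pm_0}_{N_0,L_0}}(u_1u_2)$ the angle $\theta_{12} = \theta(\pm_1\xi_1,\pm_2\xi_2)$ is forced to be small unless $\pm_1 \neq \pm_2$; in either situation $u_1u_2$ in this regime is (up to acceptable losses) essentially $\nullform(u_1,u_2)$ divided by $\theta_{12}$, but rather than go through the null form I would argue more directly. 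First decompose $u_2 = \sum_{\omega \in \Omega(r/N)} u_2^\omega$, where $u_2^\omega = \Proj_{\Delta B_N \cap T_r(\omega)} u_2$ and $\Omega(r/N)$ is the maximal $(r/N)$-separated set on $\mathbb S^2$ from \eqref{L:10}; note $\norm{u_2^\omega} \le \norm{u_2}_{N_2,r}\cdot (r/N)$ for every $\omega$ by definition of the norm $\norm{\cdot}_{N_2,r}$, and $\sum_\omega \norm{u_2^\omega}^2 \sim \norm{u_2}^2$ by \eqref{L:10}.

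Next I would observe that since $\abs{\xi_0} \le N_0 \ll N$ while $\abs{\xi_1} \sim \abs{\xi_2} \sim N$, the output direction $\xi_0/\abs{\xi_0}$ is essentially unconstrained, but the \emph{radial} component $\abs{\xi_0}$ together with the near-antipodal geometry $\xi_1 \approx -\xi_2$ (resp. $\xi_1 \approx \xi_2$) ties the size of $\abs{\xi_0}$ to the angular and hyperbolic weights via \eqref{I:4} (resp. \eqref{I:2}) and the hypothesis $\hypwt_j = O(L_j)$: one gets $\abs{\xi_0}\theta_{12} \lesssim$ something like $(N \Lmax^{012})^{1/2}\cdot(\text{stuff})$, i.e. a bound that, combined with $r = (N_0\Lmax)^{1/2}$, shows that each summand $u_1 u_2^\omega$ has spatial output frequency $\xi_0$ lying in a tube of radius $\sim r$ about $\R\omega'$ for an appropriate $\omega'$ determined by $\omega$ (in the $\pm_1 = \pm_2$ case $\omega' \approx \omega$; in the $\pm_1 \neq \pm_2$ case the geometry is different but still confines $\xi_0$ to a bounded number of tubes of radius $\sim r$). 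This is the step where I would lean on Lemma \ref{I:Lemma1}, especially \eqref{I:16} and \eqref{G:12}, to pin down the relation between $\theta_{12}$, $\abs{\xi_0}$ and the $L_j$'s. Having localized the output of each $u_1 u_2^\omega$ to (a bounded number of) tubes of radius $\sim r$, and since distinct $\omega$'s give essentially disjoint such tubes (they are $(r/N)$-separated directions, and at spatial frequency $\lesssim N_0$... actually the separation must be checked against $N_0$, not $N$), I would gain almost-orthogonality of the pieces $\Proj_{K^{\pm_0}_{N_0,L_0}}(u_1 u_2^\omega)$ in the output. Wait — this is precisely the delicate point; see the obstacle paragraph.

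Granting the almost-orthogonality, I would then estimate each piece by Theorem \ref{N:Thm2} applied with $\omega'$ in place of $\omega$ (note $u_2^\omega$ is supported in $T_r(\omega)$ and one checks $\Proj_{\R\times T_r(\omega')}u_2^\omega = u_2^\omega$ up to harmless factors when the two tubes are comparably aligned; more robustly one invokes Theorem \ref{N:Thm2} after noting $\nullform(\Proj_{\R\times T_r(\omega')}u_1, u_2^\omega)$ controls $u_1 u_2^\omega$ in this regime since the inserted $\theta_{12}$ is $\gtrsim$ the relevant ratio), obtaining
$$
  \bignorm{\Proj_{K^{\pm_0}_{N_0,L_0}}(u_1 u_2^\omega)}
  \lesssim (r^2 L_1 L_2)^{1/2}\,\norm{u_1}\,\norm{u_2^\omega}.
$$
Summing the squares over $\omega$ and using $\sum_\omega \norm{u_2^\omega}^2 \sim \norm{u_2}^2$ is not quite what we want — that would reproduce Theorem \ref{N:Thm2}. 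Instead, we use the \emph{sup} bound $\norm{u_2^\omega} \lesssim (r/N)\norm{u_2}_{N_2,r}$ on each factor and count: there are $\sim (N/r)^2$ values of $\omega$, and the outputs are almost-orthogonal, so
$$
  \bignorm{\Proj_{K^{\pm_0}_{N_0,L_0}}(u_1u_2)}^2
  \lesssim \sum_\omega r^2 L_1 L_2 \norm{u_1}^2 \norm{u_2^\omega}^2
  \lesssim r^2 L_1 L_2 \norm{u_1}^2 \Bigl(\tfrac{N}{r}\Bigr)^2 \Bigl(\tfrac{r}{N}\Bigr)^2 \norm{u_2}_{N_2,r}^2
  \cdot (\text{number count}),
$$
which must be reorganized so that the extra $L_0$ appears: in fact plugging $r^2 = N_0\Lmax^{012}$ and tracking which of $L_0,L_1,L_2$ is the max gives $r^2 L_1 L_2 \sim N_0 L_0 L_1 L_2$ when $L_0 = \Lmax$, while in the other cases a direct comparison with \eqref{A:114} combined with the trivial bound $\norm{u_2} \le \norm{u_2}_{N_2,r}$ closes the estimate. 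So the second displayed inequality in the theorem statement follows from the first, and the first is what the orthogonality-plus-Theorem-\ref{N:Thm2} argument delivers.

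\textbf{Main obstacle.} The crux is the almost-orthogonality of $\{\Proj_{K^{\pm_0}_{N_0,L_0}}(u_1 u_2^\omega)\}_{\omega}$ in the \emph{output} variable. The separation of the $\omega$'s is $r/N$, but the output spatial frequency has size only $\lesssim N_0$, and $r/N = (N_0\Lmax)^{1/2}/N$ may be much smaller than $r/N_0$; so the tubes $T_r(\omega')$, truncated to $\abs{\xi_0} \lesssim N_0$, need \emph{not} be pairwise disjoint as subsets of $B_{N_0}$. One must instead establish \emph{bounded overlap}: show that for each output point $(\tau_0,\xi_0)$ only $O(1)$ — or at worst $O((\text{small power}))$ — of the $\omega$'s can contribute, using that $u_1$ has its own support constraints (in particular $\hypwt_1 = O(L_1)$ and $\abs{\xi_1}\sim N$) which, once $\xi_0$ is fixed, pin $\xi_1$ (hence $\mp_1\xi_1 \approx \pm_2\xi_2$, hence $\omega$) to a set of controlled measure. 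Quantifying this overlap — presumably via a volume estimate for the intersection of a thickened cone $K^{\pm_1}_{L_1}$ with the relevant constraints, of the same flavor as Lemma \ref{C:Lemma1} or Theorem \ref{Z:Thm2} — is where the real work lies, and where I would expect to have to be careful about the exact exponents to avoid losing a logarithm or a power of $N_0/r$.
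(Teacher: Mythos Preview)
Your approach is genuinely different from the paper's and has a real gap at exactly the point you flag. The paper does \emph{not} reduce to Theorem~\ref{N:Thm2}; instead it dualizes to a trilinear integral, performs a \emph{double} angular decomposition (at scale $\gamma_{01}\sim(\Lmax^{012}/N_0)^{1/2}$ in the pair $(\xi_0,\xi_1)$ and at the finer scale $\gamma_{12}\sim(N_0\Lmax^{012})^{1/2}/N_1$ in the pair $(\xi_1,\xi_2)$), applies the \emph{standard} bilinear estimates from Theorem~\ref{A:Thm1} to each piece, and sums. The key orthogonality is not in the spatial output tubes at all: using Lemma~\ref{D:Lemma4} one sees that for each fine sector $\omega_1'$ the output $X_0$ is forced into a thickened \emph{null hyperplane} $H_d(\omega_1')$ in space--time, and Lemma~\ref{L:Lemma} (the new ingredient) bounds the overlap of these hyperplanes as $\omega_1'$ varies. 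This is where the factor $N_1/N_0$ (equivalently $\gamma_{01}/\gamma_{12}$) appears and converts the standard estimate into the desired one.

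Your attempt to get orthogonality from output \emph{spatial} tubes $T_r(\omega')\cap B_{N_0}$ cannot work as stated: the directions $\omega$ are $r/N$-separated, but at radius $N_0$ tubes of width $r$ separate only when the angular gap exceeds $r/N_0\gg r/N$, so the overlap is of order $(N_1/N_0)^2$, not $O(1)$. The fix you sketch (pin $\xi_1$ once $\xi_0$ is fixed) is essentially what the paper does, but the correct bookkeeping variable is the null hyperplane, not the spatial tube. Separately, invoking Theorem~\ref{N:Thm2} is problematic because that theorem carries the symbol $\theta_{12}$, which is \emph{small} here; you would need a \emph{lower} bound $\theta_{12}\gtrsim\gamma_{12}$ to convert, and no such lower bound is available in general (indeed $\theta_{12}$ can be arbitrarily small). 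Finally, your fallback to \eqref{A:114} when $L_0\neq\Lmax^{012}$ does not close: it yields $(N_0N_1\Lmin^{012}\Lmed^{012})^{1/2}\norm{u_2}$, and comparing with $(N_0L_0L_1L_2)^{1/2}\norm{u_2}_{N_2,r}$ requires $N_1\le\Lmax^{012}$, which is not assumed.
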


The proof, given in section \ref{D}, relies on two separate angular decompositions based on relationships among the angles between the spatial frequencies $\xi_0,\xi_1,\xi_2$. For the angularly localized pieces we apply the standard bilinear estimates from Theorem \ref{A:Thm1}, and in the summation over the angular sectors we use the following lemma, which is a partial orthogonality result for a family of thickened null hyperplanes corresponding to a set of well-separated directions on the unit sphere.

We introduce the notation
$$
  H_d(\omega) = \left\{ (\tau,\xi) \in \R^3 \colon \Abs{-\tau + \xi \cdot \omega} \le d \right\} \qquad (d > 0, \; \omega \in \mathbb S^2)
$$
for a thickening of the null hyperplane $-\tau + \xi \cdot \omega = 0$.

\begin{lemma}\label{L:Lemma} Suppose $N,d > 0$, $\omega_0 \in \mathbb S^2$ and $0 < \gamma < \gamma' < 1$. Then
\begin{equation}\label{L:40}
  \sum_{\genfrac{}{}{0pt}{1}{\omega \in \Omega(\gamma)}{\theta(\omega,\omega_0) \le \gamma'}} \chi_{H_d(\omega)}(\tau,\xi)
  \lesssim \frac{\gamma'}{\gamma} + \frac{d}{N\gamma^2}
  \qquad \text{for all $(\tau,\xi) \in \R^{1+3}$ with $\abs{\xi} \sim N$.}
\end{equation}
\end{lemma}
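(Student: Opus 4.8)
The plan is to fix $(\tau,\xi)$ with $\abs{\xi} \sim N$ and count the directions $\omega \in \Omega(\gamma)$ with $\theta(\omega,\omega_0) \le \gamma'$ for which the slab condition $\abs{-\tau + \xi \cdot \omega} \le d$ holds. First I would normalize: write $\xi = \abs{\xi}\,e$ with $e \in \mathbb S^2$, so the condition reads $\abs{-\tau + \abs{\xi}\, e\cdot\omega} \le d$, i.e. $e \cdot \omega \in J$ for an interval $J \subset \R$ of length $\abs{J} = 2d/\abs{\xi} \sim d/N$ (if $J \cap [-1,1] = \emptyset$ the sum is zero and there is nothing to prove). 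Thus the relevant $\omega$ lie in the intersection of the spherical cap $\{\theta(\omega,\omega_0) \le \gamma'\}$ with the spherical zone $\{e\cdot\omega \in J\}$, which is the region on $\mathbb S^2$ between two parallel circles at distance $\sim d/N$ apart (measured in the $e\cdot\omega$ coordinate). Since $\Omega(\gamma)$ is $\gamma$-separated, the number of its points in any region $\mathcal R \subset \mathbb S^2$ is $\lesssim 1 + \sigma(\mathcal R_{+\gamma})/\gamma^2$, where $\mathcal R_{+\gamma}$ is the $\gamma$-neighborhood of $\mathcal R$ and $\sigma$ is surface measure on $\mathbb S^2$ (disjoint $\gamma/2$-caps around the points are contained in $\mathcal R_{+\gamma}$, each of area $\sim \gamma^2$).

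The main work is then the area estimate
$$
  \sigma\Bigl( \{\theta(\omega,\omega_0) \le \gamma'\} \cap \{e\cdot\omega \in J'\} \Bigr) \lesssim \gamma'\Bigl(\frac{d}{N} + \gamma\Bigr),
$$
where $J'$ is the $\gamma$-fattening of $J$, so $\abs{J'} \lesssim d/N + \gamma$. To see this, parametrize the zone $\{e\cdot\omega \in J'\}$ by $\omega = (t, \sqrt{1-t^2}\,(\cos\varphi, \sin\varphi))$ in coordinates where $e$ is the first axis, with $t \in J'$; the surface measure is $dt\,d\varphi$ (the Jacobian $\sqrt{1-t^2}/\sqrt{1-t^2}$ is exactly $1$), so the zone alone has area $\lesssim \abs{J'}$. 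The cap $\{\theta(\omega,\omega_0)\le\gamma'\}$ cuts this down: for each fixed $t$, the set of admissible $\varphi$ is an arc (or all of $[0,2\pi)$, or empty), and I claim its length is $\lesssim \gamma'$. Indeed the cap is a disc of radius $\sim\gamma'$ on $\mathbb S^2$, so its intersection with any circle $\{e\cdot\omega = t\}$ — a circle of some radius $\le 1$ — has arclength $\lesssim \gamma'$ unless the whole circle is contained in the cap; but that forces the circle to have radius $\lesssim \gamma'$, and then the $\varphi$-range, while possibly all of $[0,2\pi)$, contributes arclength $2\pi \cdot(\text{radius}) \lesssim \gamma'$ to $\sigma$ — so in every case the $\sigma$-measure contributed at level $t$ is $\lesssim \gamma'$. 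Integrating over $t \in J'$ gives the claimed bound $\gamma'\abs{J'} \lesssim \gamma'(d/N + \gamma)$. Combining with the counting bound from the previous paragraph,
$$
  \sum_{\genfrac{}{}{0pt}{1}{\omega \in \Omega(\gamma)}{\theta(\omega,\omega_0) \le \gamma'}} \chi_{H_d(\omega)}(\tau,\xi)
  \;\lesssim\; 1 + \frac{1}{\gamma^2}\cdot\gamma'\Bigl(\frac{d}{N}+\gamma\Bigr)
  \;\lesssim\; \frac{\gamma'}{\gamma} + \frac{d}{N\gamma^2},
$$
where the stray $1$ is absorbed because $\gamma' > \gamma$ forces $\gamma'/\gamma > 1$. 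This is \eqref{L:40}.

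The step I expect to be the main obstacle is the careful bookkeeping in the area estimate — specifically handling the degenerate case where the spherical zone $\{e\cdot\omega \in J'\}$ is itself only $\gamma'$-thin or where $\omega_0$ is nearly aligned with $\pm e$, so that the cap and the zone are "parallel" rather than "transverse." In those configurations one must make sure the bound $\gamma'\abs{J'}$ is not beaten down to something smaller that one then needs, and conversely that one is not secretly using transversality one doesn't have; the coordinate computation above is robust to this because it only ever bounds the $\varphi$-arclength at each level by $O(\gamma')$, never using transversality of the two regions, so the worst case is harmless. A minor additional point is to confirm that $J$ can be taken to have length exactly $\sim d/N$ uniformly over $\abs{\xi} \sim N$, which is immediate from $\abs{\xi} \sim N$, and that replacing $J$ by its $\gamma$-fattening $J'$ (needed because we count lattice points in a $\gamma$-neighborhood) only changes $\abs{J}$ additively by $O(\gamma)$, as used above.
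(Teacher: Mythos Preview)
Your overall strategy --- reduce to counting $\gamma$-separated points in the intersection of a spherical zone and a cap --- is the same as the paper's, but there is a real gap in the area computation. You correctly write $d\sigma = dt\,d\varphi$, so what contributes at level $t$ is the $\varphi$-measure $\Delta\varphi(t)$, not the Euclidean arclength $\sqrt{1-t^2}\,\Delta\varphi(t)$. Your justification establishes arclength $\lesssim\gamma'$, but then slides to ``the $\sigma$-measure contributed at level $t$ is $\lesssim\gamma'$'', which would require $\Delta\varphi(t)\lesssim\gamma'$. These differ exactly when $\sqrt{1-t^2}$ is small, i.e.\ near the poles $t=\pm 1$. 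Concretely: take $\omega_0=e$ and $J'=[1-\gamma,1]$ (this is the fattened zone when $d/N\ll\gamma$). The zone is then a polar cap of geodesic radius $\sim\sqrt\gamma$ and area $\sim\gamma$, wholly contained in the $\gamma'$-cap whenever $\sqrt\gamma\lesssim\gamma'$; so $\sigma(\text{cap}\cap\text{zone}_{J'})\sim\gamma$, whereas your claimed bound $\gamma'\lvert J'\rvert\sim\gamma'\gamma$ is smaller by a factor $\gamma'$. With only the trivial bound $\sigma\lesssim\lvert J'\rvert$, your counting step gives $\lesssim 1+\lvert J'\rvert/\gamma^2\sim 1/\gamma$, which is not dominated by $\gamma'/\gamma + d/(N\gamma^2)$ when $\gamma'$ is small.

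The paper's proof avoids $\gamma$-fattening altogether and instead splits on whether the zone width $d/N$ is $\gtrsim\gamma$ or $\ll\gamma$. In the wide case one simply uses $\mathrm{area}(A')\lesssim d/N$ (ignoring the cap), giving the $d/(N\gamma^2)$ term. In the thin case the zone degenerates to its central circle; the $\Omega(\gamma)$ points in it, being $\gamma$-separated, lie on an arc of that circle inside the $\gamma'$-cap, and that arc has length $\lesssim\gamma'$, so the count is $\lesssim 1+\gamma'/\gamma$. This one-dimensional counting in the thin case is precisely the missing ingredient: you cannot recover the factor $\gamma'/\gamma$ from a two-dimensional area argument after fattening, because fattening to width $\gamma$ erases the information that the original zone was thin.
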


This is also proved in section \ref{D}. Observe that the cardinality of the index set in the sum on the left is comparable to the square of the first term on the right.

\section{General setup for bilinear restriction estimates}\label{B}

Here we give a concise summary of the general philosophy behind proving bilinear $L^2$ restriction estimates of the form \eqref{A:10}. A much more wide-ranging discussion of this theme can be found in \cite{Tao:2001}.

The discussion applies to $\R^n$, any $n \ge 1$. By duality, \eqref{A:10} is equivalent to
\begin{equation}\label{B:4}
  \Abs{I} \le C_{A_0,A_1,A_2}
  \norm{\Proj_{-A_0}f_0} \norm{\Proj_{A_1}f_1} \norm{\Proj_{A_2}f_2},
\end{equation}
where
$$
  I = \int_{\R^n} f_0(x) \Proj_{A_0}(\Proj_{A_1}f_1 \cdot \Proj_{A_2}f_2)(x)
  \, dx
  = \int_{\R^n} \Proj_{-A_0}f_0(x) \Proj_{A_1}f_1(x) \Proj_{A_2}f_2(x) \, dx,
$$
and we used Plancherel's theorem to get the last equality. Here $-A_0$ is the reflection of $A_0$ about the origin. Once the estimate is written in this way, it becomes clear that \eqref{A:10} enjoys a permutation invariance, conveniently summarized in the rule
\begin{equation}\label{B:10}
  C_{A_0,A_1,A_2} = C_{-A_2,-A_0,A_1} = C_{-A_1,-A_0,A_2}.
\end{equation}

On the other hand, $I$ can also be written in the Fourier variables as
$$
  I = \iint \chi_{\xi_1 \in A_1 \cap (\xi-A_2)} \chi_{\xi \in A_0} \widehat{\Proj_{A_1}f_1}(\xi_1) 
  \widehat{\Proj_{A_2}f_2}(\xi-\xi_1)
  \widehat{\Proj_{-A_0}f_0}(-\xi) \, d\xi_1 \, d\xi.
$$
Then applying the Cauchy-Schwarz inequality twice, first with respect to $\xi_1$ and then with respect to $\xi$, we get \eqref{B:4} with $C_{A_0,A_1,A_2} = \sup_{\xi \in A_0} \abs{A_1 \cap (\xi-A_2)}^{1/2}$. Here $\abs{A}$ denotes the $n$-dimensional volume of a set $A \subset \R^n$.

Using also the permutation rule \eqref{B:10}, we then conclude:

\begin{lemma}\label{B:Lemma1} \eqref{B:4} holds with $(C_{A_0,A_1,A_2})^2$ equal to an absolute constant times
$$
  \min\left(\sup_{\xi \in A_0} \Abs{A_1 \cap (\xi-A_2)},\sup_{\xi \in A_2} \Abs{A_0 \cap (\xi+A_1)},\sup_{\xi \in A_1} \Abs{A_0 \cap (\xi+A_2)}\right),
$$
provided that this quantity is finite.
\end{lemma}

In fact, under certain hypotheses one can take the intersection of translates of all three sets at once, as we now discuss.

We say $A \subset \R^n$ is an \emph{approximate tiling set} if, for some lattice $E \subset \R^n$, $\left\{ \xi + A \right\}_{\xi \in E}$ is a cover of $\R^n$ with $O(1)$ overlap. If the cover is almost disjoint, so that there is essentially no overlap, we just say that $A$ is a \emph{tiling set}.

Here \emph{$O(1)$ overlap} means that there exists a number $M \in \N$ such that for any $\eta \in E$, the number of $\xi \in E$ such that $\xi+A$ and $\eta+A$ intersect is at most $M$. But since $E$ is a lattice, this is equivalent to saying that $\left\{ \xi \in E : (\xi + A) \cap A \neq \emptyset \right\}$ has cardinality at most $M$.

Further, defining $A^* = A + A$, which we call the \emph{doubling} of $A$, we say that $A$ has the \emph{doubling property} if the cover $\left\{ \xi + A^* \right\}_{\xi \in E}$ also has $O(1)$ overlap.

A more general version of the following lemma, but with a less direct proof, can be found in \cite{Tao:2001}. In this lemma and its proof, implicit constants depend on the size of the overlap of the doubling cover (the number $M$ appearing in the proof).

\begin{lemma}\label{B:Lemma2} Suppose $A_0$ is an approximate tiling set with the doubling property. Then \eqref{B:4} holds with
$$
  C_{A_0,A_1,A_2}
  \sim
  \left(\sup_{\xi \in A_0, \; \xi' \in E} \bigabs{A_1 \cap (\xi-A_2) \cap (\xi'+A_0)}\right)^{1/2},
$$
provided that this quantity is finite.
\end{lemma}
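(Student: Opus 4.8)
The plan is to start from Lemma \ref{B:Lemma1}, applied not to the original sets but to a single translate of $A_0$, and then to reassemble the pieces using the tiling structure. Write $A_0^* = A_0 + A_0$ for the doubling. The first step is to reduce to the bilinear form \eqref{B:4}, as in the discussion preceding Lemma \ref{B:Lemma1}: it suffices to bound $\abs{I}$, where
$$
  I = \int_{\R^n} \Proj_{-A_0}f_0 \cdot \Proj_{A_1}f_1 \cdot \Proj_{A_2}f_2 \, dx.
$$
Since $A_0$ is an approximate tiling set, fix a lattice $E$ with $\{\xi + A_0\}_{\xi \in E}$ a cover of $\R^n$ of overlap at most $M$. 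Using a partition of unity subordinate to this cover (or simply decomposing $-A_0$ — note that $-A_0$ is tiled by $\{\xi + (-A_0)\}_{\xi \in E}$ exactly when $A_0$ is, and $-E = E$), we may write, up to the bounded overlap, $\Proj_{-A_0}f_0$ as a sum $\sum_{\xi \in E} \Proj_{-A_0 \cap (\xi + (-A_0))} g_\xi$ for suitable $g_\xi$; the cleaner route is to decompose $A_1$ instead.

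Here is the decomposition I would actually carry out. Cover $A_1$ by $\{\eta + A_0\}_{\eta \in E}$ and set $A_1^\eta = A_1 \cap (\eta + A_0)$, so that (up to overlap $\le M$) $\Proj_{A_1}f_1 = \sum_{\eta \in E}\Proj_{A_1^\eta}f_1$. For the $\eta$-piece, the spatial output frequency $\xi_0 = \xi_1 + \xi_2$ satisfies $\xi_1 \in \eta + A_0$ and $\xi_0 \in A_0$, hence $\xi_2 = \xi_0 - \xi_1 \in A_0 - (\eta + A_0) = -\eta + A_0^*$ where $A_0^* = A_0 - A_0$ (replace $A_0$ by $-A_0$ if one prefers sums; the doubling hypothesis is stated for $A+A$, and one uses $-A_0 = A_0$ up to a harmless reflection, or simply notes the argument is symmetric). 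So $\Proj_{A_2}f_2$ may be replaced by $\Proj_{A_2 \cap (-\eta + A_0^*)}f_2$ in the $\eta$-piece. Now apply Cauchy–Schwarz in $\xi_1$ and then in $\xi_0$ exactly as in the proof of Lemma \ref{B:Lemma1}, but keeping the three-fold intersection: for each fixed $\eta$,
$$
  \abs{I_\eta} \le \Bigl(\sup_{\xi_0 \in A_0}\bigabs{A_1 \cap (\xi_0 - A_2) \cap (\eta + A_0)}\Bigr)^{1/2} \norm{\Proj_{-A_0}f_0}\,\norm{\Proj_{A_1^\eta}f_1}\,\norm{\Proj_{A_2 \cap(-\eta+A_0^*)}f_2}.
$$
The supremum is $\le C_{A_0,A_1,A_2}$ with $C$ as in the statement (after renaming $\eta = \xi'$). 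Summing over $\eta \in E$, bound the $f_1$-factor by Cauchy–Schwarz in $\eta$ against the $f_2$-factor: $\sum_\eta \norm{\Proj_{A_1^\eta}f_1}\norm{\Proj_{A_2\cap(-\eta+A_0^*)}f_2} \le \bigl(\sum_\eta\norm{\Proj_{A_1^\eta}f_1}^2\bigr)^{1/2}\bigl(\sum_\eta\norm{\Proj_{A_2\cap(-\eta+A_0^*)}f_2}^2\bigr)^{1/2}$. The first sum is $\lesssim \norm{\Proj_{A_1}f_1}^2$ because $\{A_1^\eta\}$ has overlap $\le M$; the second sum is $\lesssim \norm{\Proj_{A_2}f_2}^2$ precisely because the doubling cover $\{-\eta + A_0^*\}_{\eta \in E}$ has $O(1)$ overlap — this is the only place the doubling property is used, and it is the crux of the argument. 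Combining gives \eqref{B:4} with the claimed constant.

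The main obstacle is bookkeeping rather than any deep estimate: one must be careful that the reflection in $-A_0$ and the passage from sums $A_0 + A_0$ to differences $A_0 - A_0$ are handled consistently (the hypothesis is phrased with $A^* = A+A$, so one should either observe that an approximate tiling set's doubling cover controls the difference cover up to reindexing by $-E = E$, or simply note the whole argument is invariant under $A_0 \mapsto -A_0$). The other point requiring a line of justification is that a cover of $\R^n$ by translates of $A_0$ restricts to a cover of $A_1$ with the same overlap bound, so that $\{A_1^\eta\}_{\eta \in E}$ indeed has overlap $\le M$; this is immediate. All constants depend only on $M$, as claimed.
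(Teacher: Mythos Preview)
Your argument is correct and follows essentially the same route as the paper's proof: decompose via the tiling $\{\eta+A_0\}_{\eta\in E}$, apply the Cauchy--Schwarz step from Lemma~\ref{B:Lemma1} to each piece (retaining the extra intersection with $\eta+A_0$), and sum using bounded overlap and the doubling property. The only organizational difference is that the paper decomposes \emph{both} $f_1$ and $f_2$ by the tiling (a double sum over $k,l$, then restricted to $\|k-l\|_\infty\le M$ via doubling), whereas you decompose only $f_1$ and localize $f_2$ to $-\eta+(A_0-A_0)$ via the convolution constraint; your concern about $A_0-A_0$ versus $A_0+A_0$ is legitimate but harmless, since the overlap of $\{\xi+B\}_{\xi\in E}$ equals $\#(E\cap(B-B))$, and $(A_0+A_0)-(A_0+A_0)=(A_0-A_0)-(A_0-A_0)$.
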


\begin{proof}
Represent the lattice $E$ explicitly as
$$
  E = \left\{ k_1v_1+\dots+k_mv_m : k_1,\dots,k_m \in \Z \right\},
$$
where $v_1,\dots,v_m \in \R^n$ are linearly independent. Write
$$
  A_0(k) = k_1v_1+\dots+k_mv_m+A_0,
  \qquad
  \text{for $k = (k_1,\dots,k_m) \in \Z^m$}.
$$
The doubling property implies that, for some $M \in \N$,
\begin{equation}\label{B:20}
  A_0(k) \cap ( A_0 + A_0(l) ) = \emptyset \qquad \text{for all $k,l \in \Z^m$ with $\norm{k-l}_\infty > M$.} 
\end{equation}
Without loss of generality, assume $\widehat{f_1}, \widehat{f_2} \ge 0$. Since $\{ A_0(k) \}_{k \in \Z^m}$ is a cover of $\R^n$,
\begin{equation}\label{B:22}
  \norm{\Proj_{A_0}\left( \Proj_{A_1}f_1 \cdot \Proj_{A_2}f_2 \right)}
  \le \sum_{k,l \in \Z^m} \norm{\Proj_{A_0}
  \left( \Proj_{A_1 \cap A_0(k)}f_1
  \cdot \Proj_{A_2 \cap (-A_0(l))}f_2 \right)}.
\end{equation}
But the summand vanishes unless there exist $\xi_1 \in A_0(k)$ and $\xi_2 \in -A_0(l)$ such that $\xi_1+\xi_2 \in A_0$, implying $A_0(k) \cap ( A_0 + A_0(l) ) \neq \emptyset$. In view of \eqref{B:20} we can therefore restrict the sum in \eqref{B:22} to $\norm{k-l}_\infty \le M$.

Now set
$$
  a_k = \norm{\Proj_{A_1 \cap A_0(k)}f_1},
  \qquad
  b_l = \norm{\Proj_{A_2 \cap (-A_0(l))}f_2}.
$$
By the $O(1)$ overlap of the cover $\{ A_0(k) \}_{k \in \Z^m}$,
\begin{equation}\label{B:24}
  \left( \sum_{k \in \Z^m} a_k^2 \right)^{1/2}
  \sim \norm{\Proj_{A_1}f_1},
  \qquad
  \left( \sum_{l \in \Z^m} b_{l}^2 \right)^{1/2}
  \sim
  \norm{\Proj_{A_2}f_2}.
\end{equation}
Using  Lemma \ref{B:Lemma1}, we then obtain
\begin{align*}
  \text{l.h.s.}\eqref{B:22}
  &\le \sum_{\genfrac{}{}{0pt}{1}{k,l \in \Z^m}{\norm{k-l}_\infty \le M}}
  \Biggl(\sup_{\xi \in A_0} \bigabs{A_1\cap A_0(k) \cap (\xi-A_2)}\Biggr)^{1/2}
  a_k b_l
  \\
  &\le
  \Biggl(\sup_{\xi \in A_0, \; \xi' \in E} \bigabs{A_1 \cap (\xi-A_2) \cap (\xi'+A_0)}\Biggr)^{1/2}
  \sum_{\genfrac{}{}{0pt}{1}{k,l \in \Z^m}{\norm{k-l}_\infty \le M}} a_k b_l,
\end{align*}
and the last sum can be rewritten as
$$
  \sum_{\genfrac{}{}{0pt}{1}{k,l' \in \Z^m}{\norm{l'}_\infty \le M}} a_k b_{k+l'}
  \le\sum_{\genfrac{}{}{0pt}{1}{l' \in \Z^m}{\norm{l'}_\infty \le M}}
  \left( \sum_{k \in \Z^m} a_k^2 \right)^{1/2} 
  \left( \sum_{k \in \Z^m} b_{k+l'}^2 \right)^{1/2}
  \lesssim
  \norm{\Proj_{A_1}f_1}\norm{\Proj_{A_2}f_2},
$$
where we used the Cauchy-Schwarz inequality and \eqref{B:24}. This proves the lemma.
\end{proof}

\section{A unified approach to the main examples}\label{C}

Here we give short, unified proofs of the examples \eqref{A:20}--\eqref{A:24} from the introduction, based just on the elementary estimate from Lemma \ref{C:Lemma1}.

\subsection{Stein-Tomas restriction theorem}\label{C:10}

As noted, this reduces to the bilinear restriction estimate \eqref{A:40} for the thickened unit sphere $S_\varepsilon=S_\varepsilon(1)$, so by Lemma \ref{B:Lemma1} it suffices to prove
\begin{equation}\label{C:20}
  \Abs{S_\varepsilon \cap (\xi_0 + S_\varepsilon)} \lesssim \varepsilon^2
\end{equation}
for arbitrary $\xi_0 \in \R^3$. This clearly fails when $\xi_0$ is close to zero, since then the spheres coalesce, hence the best possible volume estimate is $O(\varepsilon)$, not $O(\varepsilon^2)$. We can avoid this dangerous concentric interaction by a decomposition in the linear estimate \eqref{A:36}: Without loss of generality replace $S_\varepsilon$ by $S_\varepsilon \cap A$ in \eqref{A:20}, where $A$ is the first octant of $\R^3$, and make the same change in \eqref{C:20}. The point of this is that if $\xi_1, \xi_2 \in A$, then $\xi_0 = \xi_1 + \xi_2$ satisfies $\abs{\xi_0} \sim 1$, and the latter condition then defines the set $A_0$ in the setup of Lemma \ref{B:Lemma1}. Thus, it is enough to prove \eqref{C:10} when $\abs{\xi_0} \sim 1$, but this follows from Lemma \ref{C:Lemma1}.

\subsection{Strichartz's $L^4$ estimate for the wave equation}\label{C:30}

As noted, this reduces to the restriction estimate \eqref{A:58} for a thickened, truncated cone. For definiteness, and without loss of generality, we choose $\pm=+$. The equivalent bilinear $L^2$ estimate then reduces, by Lemma \ref{B:Lemma1}, to proving that, for any $(\tau_0,\xi_0) \in \R^{1+3}$, the set
$$
  E = K^+_{N,L} \cap \left( (\tau_0,\xi_0) - K^+_{N,L} \right)
$$
verifies the volume bound
\begin{equation}\label{C:40}
  \Abs{E} \lesssim N^2 L^2.
\end{equation}
The slices $\tau=\text{const}$ are denoted
\begin{equation}\label{C:42}
  E(\tau) = \left\{ \xi \in \R^3 \colon (\tau,\xi) \in E \right\},
\end{equation}
and we define 
\begin{equation}\label{C:44}
  J = \left\{ \tau \in \R \colon E(\tau) \neq \emptyset \right\}.
\end{equation}
Then by Fubini's theorem,
\begin{equation}\label{C:46}
  \Abs{E} \le \Abs{J} \cdot \sup_{\tau \in J} \Abs{E(\tau)}.
\end{equation}
The advantage of slicing by $\tau=\text{const}$ is that such a slice of thickened null cone is nothing else than a thickened sphere, providing an immediate connection with our proof of the Stein-Tomas estimate.

Now observe that
\begin{multline}\label{C:50}
  E \subset \Big\{ (\tau,\xi) \in \R^{1+3} \colon \frac{N}{2} \le \abs{\xi} \le N, \;
  \frac{N}{2} \le \abs{\xi_0-\xi} \le N,
  \\
  \tau=\abs{\xi}+O(L),
  \;
  \tau_0-\tau=\abs{\xi_0-\xi}+O(L) \Big\},
\end{multline}
where $\tau=\abs{\xi}+O(L)$ stands for $\bigabs{\tau-\abs{\xi}} \le L$. Assume $L \ll N$. Then by \eqref{C:50},
\begin{equation}\label{C:52}
\left\{
  \begin{alignedat}{2}
  E(\tau) &\subset S_L(\tau) \cap (\xi_0 + S_L(\tau_0-\tau))& \qquad &\text{if $\tau \sim N$},
  \\
  E(\tau) &= \emptyset& \qquad &\text{otherwise}.
  \end{alignedat}
\right.
\end{equation}
Therefore, $\abs{J} \lesssim N$, and by Lemma \ref{C:Lemma1},
\begin{equation}\label{C:54}
  \Abs{E(\tau)} \lesssim \frac{N^2L^2}{\abs{\xi_0}},
\end{equation}
hence \eqref{C:40} follows from \eqref{C:46} provided that $\abs{\xi_0} \sim N$. But this we can ensure by the same trick as we used for the Stein-Tomas theorem: We can replace $K^\pm_{N,L}$ by $K^\pm_{N,L} \cap (\R \times A)$ in the original estimate \eqref{A:58}, $A$ being the first octant of $\R^3_\xi$. This concludes the proof for $L \ll N$. If $L \gtrsim N$, on the other hand, then we use the fact that $\Abs{E} \lesssim N^3 L$, as is obvious from \eqref{C:50}.

This concludes the proof of Strichartz's estimate. Before we move on, however, let us note that the above proof easily gives also the following modified estimate, which is a special case of a theorem proved in \cite{Klainerman:1999}.

\begin{theorem}\label{C:Thm} \emph{(\cite{Klainerman:1999}.)} Let $N,L > 0$, and let $B \subset \R^3$ be a ball of radius $r \ll N$, with arbitrary center. Then
$$
  \norm{\Proj_{K^\pm_{N,L} \cap (\R \times B)} u}_{L^4(\R^{1+3})}
  \lesssim \left( (rN)^{1/2} L \right)^{1/2}
  \norm{\Proj_{K^\pm_{N,L} \cap (\R \times B)} u}_{L^2(\R^{1+3})}.
$$
\end{theorem}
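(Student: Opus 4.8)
The plan is to transcribe the proof just given for Strichartz's estimate, inserting the ball constraint at the level of the volume bound. Write $A = K^\pm_{N,L} \cap (\R \times B)$. Since $\fixednorm{\Proj_A u}_{L^4}^2 = \fixednorm{(\Proj_A u)^2}_{L^2}$ and $\widehat{(\Proj_A u)^2}$ is a convolution supported in $A + A$, Lemma \ref{B:Lemma1} (with $A_0 = \R^{1+3}$ and $A_1 = A_2 = A$) reduces the claimed estimate to the volume bound
$$
  \sup_{X_0 \in \R^{1+3}} \Abs{A \cap (X_0 - A)} \lesssim r N L^2 .
$$
Fix $X_0 = (\tau_0,\xi_0)$ and, without loss of generality, take $\pm = +$; set $E = A \cap (X_0 - A)$, which we may assume nonempty. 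The regime $L \gtrsim N$ is immediate, since then $\Abs{E} \le \Abs{A} \lesssim L r^3 \lesssim rNL^2$ (using $r^2 \ll N^2 \le NL$); so assume henceforth $L \ll N$.

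First I would record two consequences of $r \ll N$. Since $B$ has radius $r \ll N$ and meets $\Delta B_N$, its centre $c$ satisfies $\abs{c} \sim N$, so $B \subset \{\xi : \abs{\xi} \sim N\}$; hence for $(\tau_1,\xi_1) \in E$ both $\xi_1 \in B$ and $\xi_2 := \xi_0 - \xi_1 \in B$, which forces $\abs{\xi_1} \sim \abs{\xi_2} \sim N$, then $\tau_1 \sim \tau_0 - \tau_1 \sim N$, and moreover (since $\xi_0 = \xi_1 + \xi_2 = 2c + O(r)$) $\abs{\xi_0} \sim N$. In particular the dangerous quasi-concentric interaction is automatically excluded, so no \emph{octant trick} is needed here. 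Now slice by $\tau_1 = \mathrm{const}$: with $E(\tau_1) = \{\xi_1 : (\tau_1,\xi_1) \in E\}$ and $J = \{\tau_1 : E(\tau_1) \neq \emptyset\}$, Fubini gives $\Abs{E} \le \Abs{J}\sup_{\tau_1}\Abs{E(\tau_1)}$. As in \eqref{C:52}, $E(\tau_1) \subset B \cap S_L(\tau_1) \cap (\xi_0 + S_L(\tau_0 - \tau_1))$ for $\tau_1 \in J$, and since $\tau_1 = \abs{\xi_1} + O(L)$ with $\xi_1$ ranging over the radius-$r$ ball $B$, the modulus $\abs{\xi_1}$ sweeps an interval of length $\le 2r$, whence $\Abs{J} \lesssim r + L$.

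It remains to bound the slice in two complementary ways. Bound (i): $\Abs{E(\tau_1)} \le \Abs{B \cap S_L(\tau_1)} \lesssim r^2 L$, which I would prove by passing to polar coordinates $\xi_1 = \rho\theta$ — the ball $B$ subtends solid angle $\lesssim (r/N)^2$ about the origin while confining $\rho$ to an interval of length $2L$, so $\Abs{B \cap S_L(\tau_1)} \lesssim (r/N)^2 \cdot N^2 \cdot L$. Bound (ii): $\Abs{E(\tau_1)} \le \Abs{S_L(\tau_1) \cap (\xi_0 + S_L(\tau_0-\tau_1))} \lesssim NL^2$, which is Lemma \ref{C:Lemma1} applied with radii $\tau_1 \sim \tau_0 - \tau_1 \sim \abs{\xi_0} \sim N$ and thickenings $L \ll N$. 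Combining, $\Abs{E} \lesssim (r+L)\min(r^2 L,\, NL^2)$, and a trivial case split finishes: if $r \ge L$ then $\Abs{E} \lesssim r \cdot NL^2 = rNL^2$, while if $r < L$ then $\Abs{E} \lesssim L \cdot r^2 L = r^2 L^2 \le rNL^2$ since $r \ll N$. The only step requiring a little care is bound (i): one must observe that the cap of a radius-$N$ sphere lying inside a radius-$r$ ball is, at that scale, essentially a flat disc of radius $r$, so thickening it costs exactly one factor of $L$ and no more; everything else is a direct copy of the Strichartz argument, with Lemma \ref{C:Lemma1} supplying the needed gain precisely because the ball forces $\abs{\xi_0}\sim N$ for free.
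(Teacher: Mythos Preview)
Your proof is correct and follows essentially the same approach as the paper: reduce to the volume bound $\Abs{E} \lesssim rNL^2$, observe that the ball constraint automatically forces $\abs{\xi_0} \sim N$ (so no octant trick is needed), then slice by $\tau$ and apply Lemma \ref{C:Lemma1} for the key case. The only cosmetic difference is in the regime $L \gtrsim r$: the paper bounds $\Abs{E} \lesssim r^3 L$ directly (Fubini in $\xi$ then $\tau$), whereas you slice first and use your bound (i), $\Abs{E(\tau_1)} \lesssim r^2 L$, together with $\Abs{J} \lesssim L$ to get $\Abs{E} \lesssim r^2 L^2$; both estimates are easily seen to be $\lesssim rNL^2$ in this regime.
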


To prove this, we repeat the above argument. We need $\Abs{E} \lesssim rNL^2$, where now
$$
  E \subset \Big\{ (\tau,\xi) \colon \xi \in B, \;
  \xi_0-\xi \in B,
  \;
  \tau=\abs{\xi}+O(L),
  \;
  \tau_0-\tau=\abs{\xi_0-\xi}+O(L) \Big\}.
$$
Thus, $\xi_0 \in B + B$, hence $\abs{\xi_0} \sim N$, so the right side of \eqref{C:54} is comparable to $NL^2$. Clearly, \eqref{C:52} holds with $\tau \sim N$ replaced by $\tau=\abs{\xi^*} + O(\max(r,L))$, where $\xi_*$ is the center of $B$. So if $L \lesssim r$, then $\abs{J} \lesssim r$, and we conclude from \eqref{C:54} and \eqref{C:46} that $\Abs{E} \lesssim rNL^2$, as desired. On the other hand, $\Abs{E} \lesssim r^3 L$, covering the case $L \gtrsim r$.

\subsection{Bilinear generalization of Strichartz's estimate}\label{C:80}

As noted, this reduces to \eqref{A:76}. We first prove the version where the balls $B_{N_j}$ are replaced by the annuli $\Delta B_{N_j}$, then in subsection \ref{C:100} we show how to generalize to balls.

Without loss of generality, we assume $N_1 \le N_2$. Then by \eqref{A:140} we split into the cases $N_1 \lesssim N_0 \sim N_2$ and $N_0 \ll N_1 \sim N_2$, hence we need to prove
\begin{alignat}{2}
  \label{C:92}
  \norm{\Proj_{\R \times \Delta B_{N_0}}(u_1 u_2)}
  &\lesssim \left(N_0N_1L_1L_2\right)^{1/2}
  \norm{u_1}\norm{u_2}&
  \qquad &\text{if $N_0 \ll N_1 \sim N_2$},
  \\
  \label{C:90}
  \norm{\Proj_{\R \times \Delta B_{N_0}}(u_1 u_2)}
  &\lesssim \left(N_1^2L_1L_2\right)^{1/2}
  \norm{u_1}\norm{u_2}&
  \qquad &\text{if $N_1 \lesssim N_0 \sim N_2$},
\end{alignat}
for $u_1,u_2$ satisfying \eqref{A:120}.

\subsubsection{Low output case: $N_0 \ll N_1 \sim N_2$}

By tiling, as in the proof of Lemma \ref{B:Lemma2}, we can reduce \eqref{C:92} to proving, for arbitrary translates $B,B'$ of $B_{N_0}$,
$$
  \norm{\Proj_{\R \times B}u_1 \cdot \Proj_{\R \times B'}u_2}
  \lesssim \left(N_0N_1L_1L_2\right)^{1/2}
  \norm{\Proj_{\R \times B}u_1}\norm{\Proj_{\R \times B'}u_2},
$$
but by H\"older's inequality this reduces to Theorem \ref{C:Thm}, proved above.

\subsubsection{High output case: $N_1 \lesssim N_0 \sim N_2$}

It suffices to prove \eqref{C:90} with $\pm_1=+$. We now argue as in subsection \ref{C:30}, but with
$$
  E = K^+_{N_1,L_2} \cap \left( (\tau_0,\xi_0) - K^\pm_{N_2,L_2} \right)
$$
for some $(\tau_0,\xi_0)$ with $\abs{\xi_0} \sim N_0$ (due to the restriction to $\Delta B_{N_0}$), and we need
\begin{equation}\label{C:92:2}
  \Abs{E} \lesssim N_1^2 L_1L_2.
\end{equation}
Note that
\begin{multline}\label{C:94}
  E \subset \Big\{ (\tau,\xi) \in \R^{1+3} \colon \frac{N_1}{2} \le \abs{\xi} \le N_1, \;
  \frac{N_2}{2} \le \abs{\xi_0-\xi} \le N_2,
  \\
  \tau=\abs{\xi}+O(L_1),
  \;
  \tau_0-\tau=\pm\abs{\xi_0-\xi}+O(L_2) \Big\}.
\end{multline}
Assuming for the moment $L_1,L_2 \ll N_1$, we see from \eqref{C:94} that
$$
\left\{
  \begin{alignedat}{2}
  E(\tau) &\subset S_{L_1}(\tau) \cap (\xi_0 + S_{L_2}(\tau_0-\tau))& \qquad &\text{if $\tau \sim N_1$},
  \\
  E(\tau) &= \emptyset& \qquad &\text{otherwise}.
  \end{alignedat}
\right.
$$
Therefore, $\abs{J} \lesssim N_1$, and by Lemma \ref{C:Lemma1}, recalling also that $\abs{\xi_0} \sim N_0 \sim N_2$,
\begin{equation}\label{C:96}
  \Abs{E(\tau)} \lesssim \frac{N_1N_2L_1L_2}{\abs{\xi_0}} \sim N_1 L_1 L_2.
\end{equation}
In view of \eqref{C:46}, this proves \eqref{C:92:2} when $L_1,L_2 \ll N_1$. If $\Lmax^{12} \gtrsim N_1$, on the other hand, then we can use $\Abs{E} \lesssim N_1^3 \Lmin^{12}$, which is obvious from \eqref{C:94}.

\subsubsection{From annuli to balls}\label{C:100}

Without loss of generality, assume the $N_j$ are dyadic, i.e., they are of the form $2^m$ with $m \in \Z$. Write $B_{N_j}$ as an almost disjoint union
$$
  B_{N_j} = \bigcup_{0 < N_j' \le N_j} \Delta B_{N_j'},
$$
for dyadic $N_j'$. Using also $L^2$ duality to rewrite \eqref{A:76} as a trilinear integral estimate, we then see that \eqref{A:76} reduces to proving
\begin{equation}\label{C:110}
  \sum_{N_0',N_1',N_2'}
  \Abs{\iint \overline{u_0^{N_0'}} u_1^{N_1'} u_2^{N_2'} \, dt \, dx}
  \lesssim \left( \Nmin^{012} \Nmin^{12} L_1 L_2 \right)^{1/2}
  \norm{u_0} \norm{u_1} \norm{u_2}.
\end{equation}
Here the sum is restricted to dyadic $N_j' \in (0,N_j]$, for $j=0,1,2$, we assume $$
  \supp \widehat{u_0} \subset \R \times B_{N_0}
  \qquad
  \supp \widehat{u_j} \subset \left( \R \times B_{N_j} \right) \cap K^{\pm_j}_{L_j} \quad \text{for $j=1,2$},
$$  
and we write $u_j^{N_j'} = \Proj_{\R \times \Delta B_{N_j'}} u_j$ for $j=0,1,2$. As we just proved,
$$
  \Abs{\iint \overline{u_0^{N_0'}} u_1^{N_1'} u_2^{N_2'} \, dt \, dx}
  \lesssim \left( \Nmin'^{\,012} \Nmin'^{\,12} L_1 L_2 \right)^{1/2}
  \bignorm{u_0^{N_0'}} \bignorm{u_1^{N_1'}} \bignorm{u_2^{N_2'}},
$$
so to get \eqref{C:110} it suffices to show
\begin{equation}\label{C:120}
  \sum_{N_0',N_1',N_2'}
  \left( \Nmin'^{\,012} \right)^{1/2}
  \bignorm{u_0^{N_0'}} \bignorm{u_1^{N_1'}} \bignorm{u_2^{N_2'}}
  \lesssim
  \left( \Nmin^{012} \right)^{1/2}
  \bignorm{u_0} \bignorm{u_1} \bignorm{u_2}.
\end{equation}
By symmetry, assume $\Nmin'^{\,012} = N_0'$. Then $N_1' \sim N_2'$, by \eqref{A:140}. Now sum using $\sum_{N_0' \le N_0} (N_0')^{1/2} \sim N_0^{1/2}$
and $\sum_{N_1' \sim N_2'} \bignorm{u_1^{N_1'}} \bignorm{u_2^{N_2'}} \lesssim \bignorm{u_1} \bignorm{u_2}$, where the latter holds by the Cauchy-Schwarz inequality. This proves \eqref{C:120}.

\begin{remark}\label{C:Rem}
In the above proofs, we divided into cases depending on whether the $L$'s are small or not, but by a general argument we can assume the $L$'s arbitrarily small. For example, say we know \eqref{C:90} for $L_1 \lesssim \delta$, some $\delta > 0$. Then to prove \eqref{C:90} for large $L_1$, we cut $\tau_1=\pm_1\abs{\xi_1} + O(L_1)$ into thinner cones $\tau_1=\pm_1\abs{\xi_1} + c + O(\delta)$. For each piece, there is a translation by $c$ in the $\tau_1$-direction, but in physical space this corresponds to multiplying $u_1$ by $e^{itc}$, which does not affect the norms in \eqref{C:90}. Since there are $O(L_1/\delta)$ pieces, summing the individual estimates and using the Cauchy-Schwarz inequality gives us the factor $L_1^{1/2}$ in the right side of \eqref{C:90}.
\end{remark}

\section{Proof of the nonconcentration low output estimate}\label{D}

Here we first prove Theorem \ref{L:Thm1} using Lemma \ref{L:Lemma}, and then we prove the lemma. In preparation for this, we first introduce some notation for angular decompositions.

For $\gamma > 0$ and $\omega \in \mathbb S^2$ we define the conical sector
$$
  \Gamma_\gamma(\omega) = \left\{ \xi \in \R^3 : \theta(\xi,\omega) \le \gamma \right\}.
$$
Denote by $\Omega(\gamma)$ a maximal $\gamma$-separated subset of $\mathbb S^2$. Then
\begin{equation}\label{D:206}
  1 \le \sum_{\omega \in \Omega(\gamma)} \chi_{\Gamma_\gamma(\omega)}(\xi)
  \le 5^2
  \qquad (\forall \xi \neq 0),
\end{equation}
where the left inequality holds by the maximality of $\Omega(\gamma)$, and the right inequality by the $\gamma$-separation, since the latter implies (we omit the proof):
\begin{lemma}\label{D:Lemma1} For $k \in \N$ and $\omega \in \mathbb S^2$,
$\#\left\{ \omega' \in \Omega(\gamma) : \theta(\omega',\omega) \le k\gamma \right\}\le (2k+1)^2$.
\end{lemma}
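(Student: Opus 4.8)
The plan is to pass to the round metric on $\mathbb S^2$ and reduce the count to a volume (area) packing estimate, which is the standard way to bound the number of points of a separated set inside a spherical cap. First I would fix $\omega \in \mathbb S^2$ and let
$$
  \Sigma = \left\{ \omega' \in \Omega(\gamma) : \theta(\omega',\omega) \le k\gamma \right\}.
$$
Since $\Omega(\gamma)$ is $\gamma$-separated, the geodesic balls $\{ B_{\mathbb S^2}(\omega', \gamma/2) \}_{\omega' \in \Sigma}$ are pairwise disjoint. Moreover, for $\omega' \in \Sigma$ the triangle inequality on $\mathbb S^2$ gives $B_{\mathbb S^2}(\omega', \gamma/2) \subset B_{\mathbb S^2}(\omega, k\gamma + \gamma/2)$. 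Comparing areas,
$$
  \#\Sigma \cdot \inf_{\omega'} \area\bigl( B_{\mathbb S^2}(\omega',\gamma/2) \bigr)
  \le \area\bigl( B_{\mathbb S^2}(\omega, (k+\tfrac12)\gamma) \bigr),
$$
so it remains to estimate these cap areas from above and below.

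The one quantitative input needed is that a geodesic cap of radius $\rho$ on $\mathbb S^2$ has area $\sim \min(\rho,\pi)^2$; more precisely $\area(B_{\mathbb S^2}(p,\rho)) = 2\pi(1-\cos\rho)$, which is $\simeq \rho^2$ for $\rho \lesssim 1$ and bounded above by $4\pi$ always. Plugging in, $\area(B_{\mathbb S^2}(\omega',\gamma/2)) \gtrsim \gamma^2$ and $\area(B_{\mathbb S^2}(\omega,(k+\tfrac12)\gamma)) \lesssim ((k+\tfrac12)\gamma)^2 \lesssim (k\gamma)^2$ when $k\gamma \lesssim 1$, and $\lesssim 1$ otherwise; either way the ratio is $\lesssim k^2$, which already yields $\#\Sigma \lesssim (2k+1)^2$ up to an absolute constant. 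To get the clean bound $\#\Sigma \le (2k+1)^2$ as stated (with no implicit constant), one argues more carefully: the disjoint caps $B_{\mathbb S^2}(\omega',\gamma/2)$ all lie in $B_{\mathbb S^2}(\omega,(k+\tfrac12)\gamma)$, and on the sphere the elementary inequality $2\pi(1-\cos\rho) \ge \rho^2/ \text{(something)}$ together with $1-\cos\rho \le \rho^2/2$ gives, after taking the ratio of the exact area formulas, $\#\Sigma \le \frac{1-\cos((k+1/2)\gamma)}{1-\cos(\gamma/2)}$, and a convexity/half-angle computation shows the right-hand side is at most $(2k+1)^2$ for all $k \in \N$ and $0 < \gamma \le \pi$.

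The main obstacle is precisely pinning down the sharp constant: the soft area-packing argument gives $\#\Sigma \lesssim k^2$ effortlessly, but the stated bound $(2k+1)^2$ has the exact shape that one would get from a Euclidean argument (packing $\gamma$-separated points of a Euclidean $k\gamma$-ball into a grid), and transplanting that to the sphere requires the half-angle estimate $\sin(\rho/2) \le \rho/2$ in one direction and a lower bound $\sin(\rho/2) \ge (\rho/\pi)$ on $[0,\pi]$ in the other, used to compare chordal and geodesic distances so that the spherical caps genuinely nest. Since the paper explicitly says ``we omit the proof,'' I would in fact present only the short soft version: record that the $\gamma/2$-caps about points of $\Sigma$ are disjoint and contained in the $(k+\tfrac12)\gamma$-cap about $\omega$, invoke $\area(B_{\mathbb S^2}(p,\rho)) \sim \min(\rho,\pi)^2$, and conclude $\#\Sigma \lesssim (2k+1)^2$, remarking that tracking constants gives the stated inequality. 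This is exactly the estimate used in \eqref{D:206} and later in Lemma \ref{L:Lemma}, where only the order of magnitude matters.
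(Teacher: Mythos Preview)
The paper explicitly omits the proof of this lemma, so there is no argument to compare against; your area-packing approach via disjoint $\gamma/2$-caps inside the $(k+\tfrac12)\gamma$-cap is the standard proof and is correct. One small sharpening: you need not be tentative about the exact constant. Writing $1-\cos\rho = 2\sin^2(\rho/2)$, the ratio of cap areas is $\sin^2\bigl((2k+1)\gamma/4\bigr)/\sin^2(\gamma/4)$, and the elementary inequality $|\sin(n\theta)| \le n|\sin\theta|$ for integer $n$ (proved by induction on $n$ via the addition formula) gives the stated bound $(2k+1)^2$ directly whenever $(k+\tfrac12)\gamma \le \pi$; the remaining case is handled by bounding $\#\Omega(\gamma)$ by the total-area ratio and using $\sin(\gamma/4)\ge \gamma/(2\pi)$.
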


The following will be used for angular decomposition in bilinear estimates.

\begin{lemma}\label{D:Lemma2} Let $\gamma^* \in (0,1]$ and $m \ge 3$. Define $M =  2 ( 1 + \frac{m+2}{\gamma^*})$. Then
\begin{equation}\label{D:210}
  1 \le \sum_{\genfrac{}{}{0pt}{1}{0 < \gamma \le \gamma^*}{\text{$\gamma$ dyadic}}} 
  \sum_{\genfrac{}{}{0pt}{1}{\omega_1,\omega_2 \in \Omega(\gamma)}{m\gamma \le \theta(\omega_1,\omega_2) \le M\gamma}}
  \chi_{\Gamma_\gamma(\omega_1)}(\xi_1) \chi_{\Gamma_\gamma(\omega_2)}(\xi_2)
  \; \lesssim \; C(M),
\end{equation}
for all $\xi_1,\xi_2 \in \R^3 \setminus \{0\}$ with $\theta(\xi_1,\xi_2) > 0$.
\end{lemma}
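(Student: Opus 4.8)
The proof is a Whitney-type argument in the angular variables. The only tools needed are that $\theta(\cdot,\cdot)$ — read as the angle between the normalized vectors — is the geodesic distance on $\mathbb S^2$ and hence obeys the triangle inequality, together with the covering property in the left inequality of \eqref{D:206} and the counting estimate of Lemma \ref{D:Lemma1}. Throughout, write $\theta_{12} = \theta(\xi_1,\xi_2) \in (0,\pi]$, and note that the choice of $M$ gives $M - 2 = \tfrac{2(m+2)}{\gamma^*}$, so in particular $2\cdot\tfrac{\theta_{12}}{M-2} = \tfrac{\theta_{12}\gamma^*}{m+2}$.

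\emph{Lower bound.} By \eqref{D:206}, for any dyadic $\gamma \le \gamma^*$ we may choose $\omega_j \in \Omega(\gamma)$ with $\xi_j \in \Gamma_\gamma(\omega_j)$, i.e.\ $\theta(\xi_j,\omega_j) \le \gamma$ for $j=1,2$; the triangle inequality then yields $\theta_{12} - 2\gamma \le \theta(\omega_1,\omega_2) \le \theta_{12} + 2\gamma$. Consequently the corresponding summand in \eqref{D:210} is nonzero as soon as $(m+2)\gamma \le \theta_{12} \le (M-2)\gamma$, that is, $\gamma \in \bigl[\tfrac{\theta_{12}}{M-2},\tfrac{\theta_{12}}{m+2}\bigr]$, and provided also $\gamma \le \gamma^*$. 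So the plan is to exhibit a power of $2$ in the interval $\bigl[\tfrac{\theta_{12}}{M-2},\ \min\bigl(\gamma^*,\tfrac{\theta_{12}}{m+2}\bigr)\bigr]$. Since every interval $[a,2a]$ with $a>0$ contains a power of $2$, it suffices that this interval have ratio of endpoints at least $2$, i.e.\ that $\min\bigl(\gamma^*,\tfrac{\theta_{12}}{m+2}\bigr) \ge \tfrac{\theta_{12}\gamma^*}{m+2}$; the first term dominates because $\gamma^* \le 1$, and the second because $\theta_{12} \le \pi < 5 \le m+2$. Fixing such a $\gamma$ and the associated $\omega_1,\omega_2$ produces a nonzero term, so the left side of \eqref{D:210} is $\ge 1$.

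\emph{Upper bound.} Conversely, if a summand of \eqref{D:210} is nonzero then $\theta(\xi_j,\omega_j) \le \gamma$ and $m\gamma \le \theta(\omega_1,\omega_2) \le M\gamma$, whence the triangle inequality forces $(m-2)\gamma \le \theta_{12} \le (M+2)\gamma$. This is the one place the hypothesis $m \ge 3$ is used: it gives $m-2 \ge 1 > 0$, so the left inequality confines $\gamma$ to the bounded range $\bigl[\tfrac{\theta_{12}}{M+2},\tfrac{\theta_{12}}{m-2}\bigr]$, which contains at most $1 + \log_2\tfrac{M+2}{m-2} \le 1 + \log_2(M+2)$ dyadic values. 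For each admissible dyadic $\gamma$, Lemma \ref{D:Lemma1} (applied with $k=1$ and center $\xi_1/\abs{\xi_1}$, resp.\ $\xi_2/\abs{\xi_2}$) shows there are at most $9$ choices of $\omega_1$, and at most $9$ of $\omega_2$, with $\theta(\xi_j,\omega_j) \le \gamma$. Summing over the $O(\log M)$ scales, the left side of \eqref{D:210} is bounded by $81\bigl(1 + \log_2(M+2)\bigr) =: C(M)$.

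The only genuinely delicate part is the bookkeeping with the constants, and both sides of \eqref{D:210} hinge on it: the value $M = 2\bigl(1 + \tfrac{m+2}{\gamma^*}\bigr)$ is calibrated exactly so that the lower-bound interval always has ratio of endpoints $\ge 2$ (hence is nonempty), while the restriction $m \ge 3$ is precisely what keeps the upper-bound set of scales finite. Beyond this, nothing is needed except the spherical triangle inequality and Lemma \ref{D:Lemma1}.
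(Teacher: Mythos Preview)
Your proof is correct and is precisely the standard Whitney-type argument the paper has in mind (the paper itself omits the proof as ``straightforward''). One minor slip in the bookkeeping: the justifications for the two terms in the $\min$ are swapped --- $\gamma^* \ge \tfrac{\theta_{12}\gamma^*}{m+2}$ holds because $\theta_{12} \le \pi < m+2$, while $\tfrac{\theta_{12}}{m+2} \ge \tfrac{\theta_{12}\gamma^*}{m+2}$ holds because $\gamma^* \le 1$ --- but both inequalities are valid and the argument goes through unchanged.
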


We omit the straightforward proof. The condition $\theta(\omega_1,\omega_2) \ge m\gamma$ ensures that the sectors in \eqref{D:210} are well-separated, since $m \ge 3$. If separation is not needed, the following variation, whose proof we also omit, may be used:

\begin{lemma}\label{D:Lemma3} For any $0 < \gamma < 1$ and $k \in \N$,
$$
  \chi_{\theta(\xi_1,\xi_2) \le k\gamma}(\xi_1,\xi_2) \lesssim \sum_{\genfrac{}{}{0pt}{1}{\omega_1,\omega_2 \in \Omega(\gamma)}{\theta(\omega_1,\omega_2) \le (k+2)\gamma}}
  \chi_{\Gamma_\gamma(\omega_1)}(\xi_1) \chi_{\Gamma_\gamma(\omega_2)}(\xi_2),
$$
for all $\xi_1,\xi_2 \in \R^3 \setminus \{0\}$.
\end{lemma}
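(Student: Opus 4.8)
The plan is to deduce the estimate purely from the triangle inequality for angular distance on $\mathbb S^2$ together with the maximality of $\Omega(\gamma)$; no volume or counting argument is needed, so the implied constant will in fact be $1$.

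First I would fix $\xi_1,\xi_2 \in \R^3 \setminus \{0\}$ and assume the left side is nonzero, i.e.\ $\theta(\xi_1,\xi_2) \le k\gamma$ (otherwise there is nothing to prove). By the maximality of the $\gamma$-separated set $\Omega(\gamma)$ — equivalently, by the left inequality in \eqref{D:206} — for $j=1,2$ there is some $\omega_j \in \Omega(\gamma)$ with $\theta(\xi_j,\omega_j) \le \gamma$, that is, $\xi_j \in \Gamma_\gamma(\omega_j)$.

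Next, identifying each nonzero vector with its normalization in $\mathbb S^2$, the angle $\theta(\cdot,\cdot)$ is just the great-circle distance, hence a genuine metric on $\mathbb S^2$ with values in $[0,\pi]$. Applying the triangle inequality along the chain $\omega_1 \to \xi_1 \to \xi_2 \to \omega_2$ gives
\[
  \theta(\omega_1,\omega_2) \le \theta(\omega_1,\xi_1) + \theta(\xi_1,\xi_2) + \theta(\xi_2,\omega_2) \le \gamma + k\gamma + \gamma = (k+2)\gamma.
\]
Thus the pair $(\omega_1,\omega_2)$ is admissible in the sum on the right, and its summand $\chi_{\Gamma_\gamma(\omega_1)}(\xi_1)\,\chi_{\Gamma_\gamma(\omega_2)}(\xi_2)$ equals $1$; since all summands are nonnegative, the sum is $\ge 1 = \chi_{\theta(\xi_1,\xi_2)\le k\gamma}(\xi_1,\xi_2)$, as claimed.

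There is no real obstacle here: the lemma is essentially a repackaging of the triangle inequality, which is presumably why the authors omit the proof. The only thing worth double-checking is the degenerate regime $(k+2)\gamma \ge \pi$, but then the constraint $\theta(\omega_1,\omega_2)\le(k+2)\gamma$ holds for every pair in $\Omega(\gamma)$ and the estimate is trivial.
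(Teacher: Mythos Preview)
Your proof is correct and is exactly the argument one expects: the paper explicitly omits the proof of this lemma as straightforward, and the triangle-inequality-plus-maximality reasoning you give is the natural way to fill it in. There is nothing to compare, since the paper provides no proof of its own.
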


Recall the notation \eqref{A:120:2}, which we can also restate as
\begin{equation}\label{D:36}
  \widehat{u_j^{\gamma,\omega}}(X_j) = \chi_{\Gamma_\gamma(\omega)}(\pm_j\xi_j)
  \widehat u_j(X_j) \qquad (0 < \gamma < 1, \; \omega \in \mathbb S^2).
\end{equation}
Here $j=1,2$, but later we also use this for $j=0$, if $\pm_0$ is given. Then by \eqref{D:206},
\begin{equation}\label{D:70}
  \bignorm{u_j}
  \sim
  \left( \sum_{\omega \in \Omega(\gamma)}
  \norm{u_j^{\gamma,\omega}}^2 \right)^{1/2}.
\end{equation}
Summing out the $\omega$'s in a bilinear estimate is never a problem. In fact,
\begin{equation}\label{D:38}
\begin{aligned}
  \sum_{\genfrac{}{}{0pt}{1}{\omega_1,\omega_2 \in \Omega(\gamma)}
  {\theta(\omega_1,\omega_2) \lesssim \gamma}}
  \norm{u_1^{\gamma,\omega_1}}\norm{u_2^{\gamma,\omega_2}}
  &\le \left( \sum_{\omega_1,\omega_2}
  \norm{u_1^{\gamma,\omega_1}}^2 \right)^{1/2}
  \left( \sum_{\omega_1,\omega_2}
  \norm{u_2^{\gamma,\omega_2}}^2 \right)^{1/2}
  \\
  &\lesssim \norm{u_1}\norm{u_2},
\end{aligned}
\end{equation}
Here we first applied the Cauchy-Schwarz inequality, then in the second step we used Lemma \ref{D:Lemma1}, and to get the last inequality we used \eqref{D:70}.

We now prove Theorem \ref{L:Thm1}, assuming throughout $L_0 \ll N_0 \ll N_1 \sim N_2$. We split into the cases $\pm_1\neq\pm_2$ and $\pm_1=\pm_2$. Define $\theta_{12}$ as in \eqref{A:170}.

\subsection{The case $\pm_1\neq\pm_2$} Then $\theta_{12}=\theta(\xi_1,-\xi_2)$, and since $\xi_0=\xi_1+\xi_2$ with $\abs{\xi_0} \ll \abs{\xi_1} \sim \abs{\xi_2}$, we conclude that $\theta_{12} \ll 1$, hence $\sin \theta_{12} \sim \theta_{12}$. By Lemma \ref{I:Lemma1},
\begin{equation}\label{D:8}
  \theta_{12} \lesssim \gamma_{12} \equiv \left( \frac{N_0\Lmax^{012}}{N_1^2} \right)^{1/2}.
\end{equation}
Next, define
$$
  \theta_{01} = \theta(\pm_0'\xi_0,\xi_1),
$$
where the sign $\pm_0'$ is chosen so that $\theta_{01} \in [0,\pi/2]$, hence $\sin \theta_{01} \sim \theta_{01}$. To be precise, we split the region of integration into two parts, one for each choice of sign. By the sine rule (see Figure \ref{fig:1}) we then get
\begin{equation}\label{D:12}
  N_0 \theta_{01} \sim N_2 \theta_{12} \sim N_1 \theta_{12},
\end{equation}
and combining this with \eqref{D:8} we conclude that
\begin{equation}\label{D:20}
  \theta_{01} \sim \frac{N_1}{N_0} \theta_{12}
  \lesssim \gamma_{01} \equiv \frac{N_1}{N_0}\gamma_{12} \sim
  \left(\frac{\Lmax^{012}}{N_0}\right)^{1/2}.
\end{equation}

\begin{figure}
   \centering
   \includegraphics{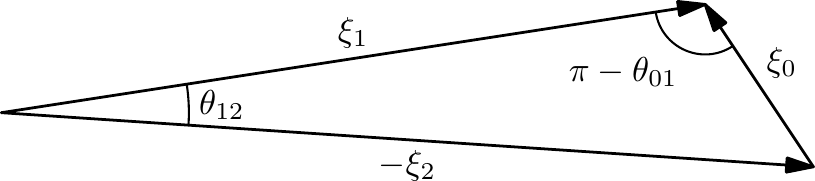}
   \caption{Low output with $\pm_1\neq\pm_2$. In the situation drawn here, $\pm_0'$ is a $-$ sign, to ensure $\theta_{01} = \theta(-\xi_0,\xi_1) \in [0,\pi/2]$.}
   \label{fig:1}
\end{figure}

Rewrite the estimate in Theorem \ref{L:Thm1} in the equivalent form, by duality,
\begin{equation}\label{D:30}
  \Abs{\iint \overline{u_0}\, u_1 u_2 \, dt \, dx}
  \lesssim \left( N_1^2 \Lmin^{012} \Lmed^{012} \right)^{1/2} \norm{u_0} \norm{u_1}
  \sup_{\omega \in \mathbb S^2} \norm{\Proj_{\R \times T_{r}(\omega)}u_2},
\end{equation}
where $u_0,u_1,u_2 \in L^2(\R^{1+3})$ and $u_1,u_2$ satisfy \eqref{A:120}. Without loss of generality, we can assume $\widehat{u_j} \ge 0$ for $j=0,1,2$, hence we can remove the absolute value above.

Now we make an angular decomposition with respect to the maximal dyadic size $\gamma_{01}$ of the angle $\theta_{01}$, given by \eqref{D:20}. By Lemma \ref{D:Lemma3},
\begin{equation}\label{D:34}
  \iint \overline{u_0}\, u_1 u_2 \, dt \, dx
  \lesssim
  \sum_{\genfrac{}{}{0pt}{1}{\omega_0,\omega_1 \in \Omega(\gamma_{01})}
  {\theta(\omega_0,\omega_1) \lesssim \gamma_{01}}}
  \iint \overline{u_0^{\gamma_{01},\omega_0}} u_1^{\gamma_{01},\omega_1} u_2 \, dt \, dx,
\end{equation}
with notation as in \eqref{D:36}, where for $u_0$ we use the sign $\pm_0'$, not $\pm_0$.

Next, we make an additional decomposition with respect to the maximal dyadic size $\gamma_{12}$ of the angle $\theta_{12}$, given by \eqref{D:8}. Thus, applying Lemma \ref{D:Lemma3} one more time,
\begin{multline}\label{D:40}
  \iint \overline{u_0^{\gamma_{01},\omega_0}} u_1^{\gamma_{01},\omega_1} u_2 \, dt \, dx
  \\
  \lesssim
  \sum_{\genfrac{}{}{0pt}{1}{\omega_1',\omega_2' \in \Omega(\gamma_{12})}
  {\theta(\omega_1',\omega_2') \lesssim \gamma_{12}}}
  \iint \overline{u_0^{\gamma_{01},\omega_0}} (u_1^{\gamma_{01},\omega_1})^{\gamma_{12},\omega_1'} u_2^{\gamma_{12},\omega_2'} \, dt \, dx,
\end{multline}
where we use again the notation from \eqref{D:36}. In particular,
$$
  \mathcal F (u_1^{\gamma_{01},\omega_1})^{\gamma_{12},\omega_1'}(X_1)
  = \chi_{\Gamma_{\gamma_{01}}(\omega_1)}(\pm_1\xi_1)
  \chi_{\Gamma_{\gamma_{12}}(\omega_1')}(\pm_1\xi_1)
  \widehat u_1(X_1),
$$
so once $\omega_1$ has been chosen, $\omega_1'$ is constrained by $\theta(\omega_1',\omega_1) \lesssim \gamma_{01}$.

We need the following lemma. Here we use the notation
$$
  H_d(\omega) = \left\{ (\tau,\xi) \in \R^3 \colon \Abs{-\tau + \xi \cdot \omega} \lesssim d \right\} \qquad (d > 0, \; \omega \in \mathbb S^2)
$$
for a thickened null hyperplane (we include an implicit absolute constant to clean up the notation), and $K^{\pm}_{N,L,\gamma,\omega}$ is defined as in \eqref{D:42}.

\begin{lemma}\label{D:Lemma4} For $N,L > 0$, $\omega \in \mathbb S^2$ and $0 < \gamma < 1$,
$$
  K^{\pm}_{N,L,\gamma,\omega} \subset H_{\max(L,N\gamma^2)}(\omega).
$$
\end{lemma}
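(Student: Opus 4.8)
The plan is a direct pointwise computation: take an arbitrary $(\tau,\xi) \in K^{\pm}_{N,L,\gamma,\omega}$ and show that $\bigabs{-\tau + \xi \cdot \omega} \lesssim \max(L, N\gamma^2)$. By the definitions \eqref{A:77}, \eqref{D:42}, such a point satisfies three conditions: $\abs{\xi} \sim N$ (in fact $N/2 < \abs{\xi} \le N$), $\bigabs{-\tau \pm \abs{\xi}} \le L$, and $\theta(\pm\xi,\omega) \le \gamma$.

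The key step is the algebraic splitting
$$
  -\tau + \xi \cdot \omega = \bigl( -\tau \pm \abs{\xi} \bigr) + \bigl( \xi \cdot \omega \mp \abs{\xi} \bigr).
$$
The first bracket is $O(L)$ by the hyperbolic-weight condition. For the second, write $\xi \cdot \omega = \pm(\pm\xi)\cdot\omega = \pm\abs{\xi}\cos\theta(\pm\xi,\omega)$, so that $\xi\cdot\omega \mp \abs{\xi} = \mp\abs{\xi}\bigl(1 - \cos\theta(\pm\xi,\omega)\bigr)$; hence, using $1 - \cos\theta \sim \theta^2$ for $\theta \in [0,\pi]$ (as recorded at the end of Section \ref{A}) together with $\abs{\xi} \le N$ and $\theta(\pm\xi,\omega) \le \gamma$, we get $\bigabs{\xi\cdot\omega \mp \abs{\xi}} \lesssim \abs{\xi}\theta(\pm\xi,\omega)^2 \le N\gamma^2$. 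Combining, $\bigabs{-\tau + \xi\cdot\omega} \lesssim L + N\gamma^2 \lesssim \max(L,N\gamma^2)$, which is exactly the membership $(\tau,\xi) \in H_{\max(L,N\gamma^2)}(\omega)$ since the definition of $H_d(\omega)$ already absorbs an implicit absolute constant.

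There is no real obstacle here; the only thing to be careful about is the sign bookkeeping in the two cases $\pm = +$ and $\pm = -$, but both are handled uniformly by the identity $\xi\cdot\omega = \pm\abs{\xi}\cos\theta(\pm\xi,\omega)$ above, so a single computation suffices.
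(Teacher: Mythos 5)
Your proof is correct and follows essentially the same route as the paper: split $-\tau+\xi\cdot\omega$ into the hyperbolic weight (bounded by $L$) plus the term $\xi\cdot\omega \mp\abs{\xi}$, and bound the latter by $N\gamma^2$ using the angular restriction. The only cosmetic difference is that the paper rationalizes $\pm\abs{\xi}-\xi\cdot\omega$ by its conjugate (and uses $\gamma<1$ to control the denominator), whereas you invoke $1-\cos\theta\sim\theta^2$ directly, which is recorded at the end of Section~\ref{A} and is if anything slightly cleaner.
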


\begin{proof}
Let $(\tau,\xi) \in K^{\pm}_{N,L,\gamma,\omega}$. Then $-\tau+\xi \cdot \omega$ equals
$$
  \left(-\tau\pm\abs{\xi}\right)
  - \left( \pm \abs{\xi} - \xi \cdot \omega \right)
  = O(L)
  - \frac{\abs{\xi}^2\bigl(1-\cos^2\theta(\pm\xi,\omega)\bigr)}{\pm\left(\abs{\xi}\pm\xi \cdot \omega\right)}
  = O(L) + O(N\gamma^2),
$$
where we used the fact that $\theta(\pm\xi,\omega) \le \gamma < 1$, hence $\pm\xi \cdot \omega \ge 0$.
\end{proof}

If $X_1,X_2$ belong to the Fourier supports of $(u_1^{\gamma_{01},\omega_1})^{\gamma_{12},\omega_1'}, u_2^{\gamma_{12},\omega_2'}$, respectively, then (since $\theta(\omega_1',\omega_2') \lesssim \gamma_{12}$)
$$
  X_j \in \left\{ (\tau,\xi) \in K^{\pm_j}_{N_j,L_j} \colon \theta(\pm_j\xi_j,\omega_1') \lesssim \gamma_{12} \right\} \qquad (j=1,2),
$$
so by Lemma \ref{D:Lemma4} we conclude that $X_j \in H_{\max(L_j,N_j\gamma_{12}^2)}(\omega_1')$ for $j=1,2$, hence
\begin{equation}\label{D:50}
  X_0 = X_1 + X_2
  \in H_d(\omega_1'),
  \qquad \text{where}
  \qquad
  d = \max(\Lmax^{12},N_1\gamma_{12}^2).
\end{equation}
Therefore, we can replace $u_0^{\gamma_{01},\omega_0}$ in \eqref{D:40} by $\Proj_{H_d(\omega_1')} u_0^{\gamma_{01},\omega_0}$, so combining \eqref{D:34} and \eqref{D:40}, and applying \eqref{A:110} or \eqref{A:112} from Theorem \ref{A:Thm1} to each term in \eqref{D:40},
\begin{multline}\label{D:60}
  \iint \overline{u_0}\, u_1 u_2 \, dt \, dx
  \lesssim
  \sum_{\omega_0,\omega_1} \sum_{\omega_1',\omega_2'}
  \left[ \min\left( N_0N_1L_1L_2, N_0^2 L_0 \Lmin^{12} \right) \right]^{1/2}
  \\
  \times
  \Bignorm{\Proj_{H_d(\omega_1')} u_0^{\gamma_{01},\omega_0}}
  \norm{(u_1^{\gamma_{01},\omega_1})^{\gamma_{12},\omega_1'}}
  \norm{u_2^{\gamma_{12},\omega_2'}},
\end{multline}
where the sum is over $\omega_0,\omega_1 \in \Omega(\gamma_{01})$ with $\theta(\omega_0,\omega_1) \lesssim \gamma_{01}$, and $\omega_1',\omega_2' \in \Omega(\gamma_{12})$ with $\theta(\omega_1',\omega_2) \lesssim \gamma_{12}$ and $\theta(\omega_1',\omega_1) \lesssim \gamma_{01}$.

Note that once $\omega_1'$ has been chosen, then the choice of $\omega_2'$ is limited to a set of cardinality $O(1)$, in view of Lemma \ref{D:Lemma1}, and similarly for the pair $\omega_0,\omega_1$. This fact will be used without further mention. In essence, this means that we are only summing over $\omega_1$ and $\omega_1'$, say. 

Observe that the $\xi$-support of $u_2^{\gamma_{12},\omega_2'}$ is contained in a tube of radius $r$, where
$$
  r \sim N_2 \gamma_{12} \sim N_1 \gamma_{12} \sim N_0\gamma_{01} \sim (N_0\Lmax^{012})^{1/2},
$$
around the axis $\R\omega_2'$. Taking the supremum over these tubes, and summing $\omega_1'$ using the Cauchy-Schwarz inequality, we get
\begin{multline}\label{D:68}
  \iint \overline{u_0}\, u_1 u_2 \, dt \, dx
  \lesssim
  \sum_{\omega_0,\omega_1}
  \left[ \min\left( N_0N_1L_1L_2, N_0^2 L_0 \Lmin^{12} \right) \right]^{1/2}
  \\
  \times
  \left( \sum_{\omega_1'}
  \norm{\Proj_{H_d(\omega_1')} u_0^{\gamma_{01},\omega_0}}^2 \right)^{1/2}
  \bignorm{u_1^{\gamma_{01},\omega_1}}
  \sup_{\omega \in \mathbb S^2} \norm{\Proj_{\R \times T_{r}(\omega)}u_2}.
\end{multline}
Recall that the sum over $\omega_1'$ is restricted by $\theta(\omega_1',\omega_1) \lesssim \gamma_{01}$. Therefore, from Lemma \ref{L:Lemma} we conclude that
\begin{equation}\label{D:72}
  \sum_{\omega_1'}
  \norm{\Proj_{H_d(\omega_1')} u_0^{\gamma_{01},\omega_0}}^2 
  \lesssim
  \left( \frac{\gamma_{01}}{\gamma_{12}} + \frac{d}{N_0\gamma_{12}^2} \right)
  \norm{u_0^{\gamma_{01},\omega_0}}^2.
\end{equation}
But since $d$ is given by \eqref{D:50},
\begin{equation}\label{D:74}
  \left( \frac{\gamma_{01}}{\gamma_{12}} + \frac{d}{N_0\gamma_{12}^2} \right)
  \sim
  \left( \frac{N_1}{N_0}
  +
  \frac{\Lmax^{12}}{N_0\gamma_{12}^2} \right)
  \sim
  \max\left( \frac{N_1}{N_0},
  \frac{N_1^2\Lmax^{12}}{N_0^2\Lmax^{012}} \right),
\end{equation}
since $\gamma_{12}^2 \sim N_0\Lmax^{012}/N_1^2$. Combining \eqref{D:68}--\eqref{D:74}, we get
\begin{multline*}
  \iint \overline{u_0}\, u_1 u_2 \, dt \, dx
  \lesssim
  \left[ \max\left( \frac{N_1}{N_0},
  \frac{N_1^2\Lmax^{12}}{N_0^2\Lmax^{012}} \right)
  \min\left( N_0N_1L_1L_2, N_0^2 L_0 \Lmin^{12} \right)\right]^{1/2}
  \\
  \times
  \sum_{\omega_0,\omega_1}
  \bignorm{u_0^{\gamma_{01},\omega_0}}
  \bignorm{u_1^{\gamma_{01},\omega_1}}
  \sup_{\omega \in \mathbb S^2} \norm{\Proj_{\R \times T_{r}(\omega)}u_2}.
\end{multline*}
Simplifying, and summing $\omega_0,\omega_1$ as in \eqref{D:38}, we get \eqref{D:30}, proving Theorem \ref{L:Thm1} in the case $\pm_1\neq\pm_2$.

\subsection{The case $\pm_1=\pm_2$} Then $\theta_{12}=\theta(\xi_1,\xi_2)$. Since $N_0 \ll N_1 \sim N_2$, Lemma \ref{I:Lemma1} implies $\theta_{12} \sim 1$ and $\Lmax^{012} \gtrsim N_1$. Applying \eqref{L:10} to $u_2$ with $N=N_2$ and $r=(N_0\Lmax^{012})^{1/2}$, and using \eqref{A:110} or \eqref{A:112}, we then get the desired estimate.

\subsection{Proof of Lemma \ref{L:Lemma}}\label{H}

The left hand side of \eqref{L:40} equals
$$
  \# \left\{ \omega \in \Omega(\gamma) \cap \Gamma_{\gamma'}(\omega_0) \colon \omega \in A \right\}, \qquad \text{where $A = \left\{ \omega \in \mathbb S^2 \colon \abs{-\tau+\xi\cdot\omega} \le d \right\}$},
$$
for given $\tau,\xi$ with $\abs{\xi} \sim N$. Without loss of generality assume $\xi = (\abs{\xi},0,0)$. Then
$$
  A \subset A' \equiv \left\{ \omega = (\omega^1,\omega^2,\omega^3) \in \mathbb S^2 \colon \omega^1 = \frac{\tau}{\abs{\xi}} + O\left(\frac{d}{N}\right) \right\}.
$$
Thus, $A'$ is the intersection of $\mathbb S^2$ and a thickened plane with normal $(1,0,0)$, and thickness comparable to $d/N$, so it looks either like a circular band or a sphere cap, which, however, can degenerate to a circle or a point, respectively. Thus,
$$
  \# \left\{ \omega \in \Omega(\gamma) \cap \Gamma_{\gamma'}(\omega_0) \colon \omega \in A' \right\}
  \lesssim 1 + \frac{\gamma'}{\gamma} + \frac{\text{area}(A')}{\gamma^2},
$$
where the first two terms cover the cases where $A'$ degenerates to a point or a circle, respectively, and the third term covers the case where $A'$ is either a sphere cap of radius $\gtrsim \gamma$ or a band of width $\gtrsim \gamma$. Since $\gamma < \gamma'$, we ignore the first term.

Using spherical coordinates we find $\text{area}(A') \lesssim d/N$, and the proof is complete.

\section{Proof of the main anisotropic estimate}\label{K}

Here we prove Theorem \ref{Z:Thm1}. By tiling (as in the proof of Lemma \ref{B:Lemma2}), it suffices to prove, given any $\delta >0$ and intervals $I_1,I_2 \subset \R$ with $\abs{I_1} = \abs{I_2} = \delta$, that
\begin{equation}\label{K:2}
  \bignorm{u_1^{I_1}u_2^{I_2}}
  \lesssim \left( \frac{\delta\Nmin^{12}L_1L_2}{\alpha} \right)^{1/2}
  \bignorm{u_1^{I_1}}
  \bignorm{u_2^{I_2}},
  \qquad \text{where $u_j^{I_j} = \Proj_{\xi_j \cdot \omega \in I_j} u_j$},
\end{equation}
and we may assume
\begin{equation}\label{K:2:2}
  \delta \ll \Nmin^{12}\alpha,
\end{equation}
since otherwise \eqref{A:76} is already better. By duality, rewrite \eqref{K:2} as
\begin{equation}\label{K:3:1}
  \iint \overline{u_0}\, u_1^{I_1} u_2^{I_2} \, dt \, dx
  \lesssim \left( \frac{\delta \Nmin^{12} L_1 L_2}{\alpha} \right)^{1/2}
  \bignorm{u_0}
  \bignorm{u_1^{I_1}}
  \bignorm{u_2^{I_2}},
\end{equation}
where $u_0 \in L^2(\R^{1+3})$ and we assume $\widehat{u_j} \ge 0$ for $j=0,1,2$. By Lemma \ref{D:Lemma2},
\begin{equation}\label{K:3:2}
  \text{l.h.s.}\eqref{K:3:1}
  \sim
  \sum_{\gamma} \sum_{\omega_1,\omega_2}
  \gamma
  \iint
  \overline{u_0} \, u_1^{I_1;\gamma,\omega_1}
  u_2^{I_2;\gamma,\omega_2} \, dt \, dx,
\end{equation}
where the sum is over dyadic $\gamma$ and $\omega_1,\omega_2 \in \Omega(\gamma)$ satisfying
\begin{equation}\label{K:3:4}
  0 < \gamma \le \frac{\pi}{1000},
  \qquad
  16\gamma \le \theta(\omega_1,\omega_2) \le M\gamma,
\end{equation}
where $M=2+36000/\pi$. For convenience we replace $\alpha$ by $2\alpha$. Splitting the support of $\widehat{u_1}$ into two symmetric parts, we may assume
\begin{equation}\label{K:4}
  \supp \widehat{u_1} \subset A_0 \equiv \left\{ (\tau,\xi) \colon
  \theta(\pm_1\xi,\omega) \le \frac{\pi}{2} - 2\alpha \right\}.
\end{equation}
Next, split the support of $\widehat{u_2}$ into three parts, by intersecting with
\begin{align*}
  A_1 &= \left\{ (\tau,\xi) \colon
  \theta(\xi,\omega^\perp) \le \alpha \right\},
  \\
  A_2 &= \left\{ (\tau,\xi) \colon
  \theta(\pm_2\xi,\omega) \le \frac{\pi}{2} - \alpha \right\},
  \\
  A_3 &= \left\{ (\tau,\xi) \colon
  \theta(\pm_2\xi,-\omega) \le \frac{\pi}{2} - \alpha \right\},
\end{align*}
whose union is $\R^{1+3}$. Correspondingly we split the proof into three cases.

\subsection{The case $\supp \widehat{u_2} \subset A_1$}\label{K:8}

Then $\gamma \ge \alpha/2$ in the sum in \eqref{K:3:2}, so since
$$
  \sum_{\genfrac{}{}{0pt}{1}{\alpha/2 \le \gamma < 1}{\text{$\gamma$ dyadic}}}
  \frac{1}{\gamma^{1/2}} \; \sim \; \frac{1}{\alpha^{1/2}},
$$
and since we can sum $\omega_1,\omega_2$ as in \eqref{D:38}, we conclude that it suffices to prove
\begin{equation}\label{K:8:2}
  \bignorm{u_1^{I_1;\gamma,\omega_1}
  u_2^{I_2;\gamma,\omega_2}}
  \lesssim \left( \frac{\delta \Nmin^{12} L_1 L_2}{\gamma} \right)^{1/2}
  \bignorm{u_1^{I_1;\gamma,\omega_1}}
  \bignorm{u_2^{I_2;\gamma,\omega_2}},
\end{equation}
under the assumption $\supp \widehat{u_2} \subset A_1$ and the separation assumption \eqref{K:3:4}.

Replacing $\gamma$ by $4\gamma$, we may assume without loss of generality that
\begin{equation}\label{K:8:4}
  \omega_2 \in \omega^\perp,
\end{equation}
while still maintaining adequate separation:
\begin{equation}\label{K:8:6}
  \theta(\omega_1,\omega_2) \ge 3\gamma.
\end{equation}
Indeed, $\supp \widehat{u_2} \subset A_1$ and $\gamma \ge \alpha/2$ imply $\theta(\omega_2,\omega^\perp) \le 3\gamma$ (or $u_2^{I_2;\gamma,\omega_2}$ vanishes), so if we rotate $\omega_2$ through this angle to get $\omega_2' \in \omega^\perp$, and replace $\gamma$ by $\gamma' = 3\gamma + \gamma = 4\gamma$, then the new sector $\Gamma_{\gamma'}(\omega_2')$ contains the original sector $\Gamma_\gamma(\omega_2)$. Moreover, the $\gamma'$-sectors around $\omega_1,\omega_2'$ are well-separated, since $\theta(\omega_1,\omega_2') \ge 16\gamma-3\gamma \ge 3\gamma'$. Dropping the primes on $\omega_2'$ and $\gamma'$, we thus have \eqref{K:8:4} and \eqref{K:8:6}.

By Lemma \ref{B:Lemma1}, we reduce to proving that for any $(\tau_0,\xi_0) \in \R^{1+3}$, the set
$$
  E = \left\{ (\tau,\xi) : \xi \cdot \omega \in I_1 \right\} \cap K^{\pm_1}_{N_1,L_1,\gamma,\omega_1} 
  \cap A_0 \cap \left( (\tau_0,\xi_0) - A_1 \cap K^{\pm_2}_{N_2,L_2,\gamma,\omega_2} \right),
$$
where we use the notation from \eqref{D:42}, verifies the volume bound
\begin{equation}\label{K:10}
  \abs{E} \lesssim \frac{\delta \Nmin^{12} L_1 L_2}{\gamma}
\end{equation}
For this, we use the same general argument as in \cite[Lemma 7.1]{Tao:2001}. Clearly,
\begin{equation}\label{K:12}
  E
  \subset
  \bigl\{ (\tau,\xi) \colon \xi \in R,  \; -\tau\pm_1\abs{\xi} = O(L_1), \; -(\tau_0-\tau)\pm_2\abs{\xi_0-\xi} = O(L_2) \bigr\},
\end{equation}
where
\begin{multline*}
  R = \Bigl\{ \xi \colon  \; \abs{\xi} \sim N_1,
  \;
  \abs{\xi_0-\xi} \sim N_2,
  \;
  \xi \cdot \omega \in I_1,
  \;
  \theta(e_1,\omega_1) \le \gamma,
  \;
  \theta(e_2,\omega_2) \le \gamma,
  \\
  \theta(e_1,\omega^\perp) \ge 2\alpha,
  \;
  \theta(e_2,\omega^\perp) \le \alpha \Bigr\}
\end{multline*}
and we use the shorthand
\begin{equation}\label{K:24}
  e_1 = \pm_1\frac{\xi}{\abs{\xi}},
  \qquad
  e_2 = \pm_2\frac{\xi_0-\xi}{\abs{\xi_0-\xi}}.
\end{equation}
We assume $\gamma \gtrsim \alpha$, as otherwise $R$ would be empty, in view of the fact that $\theta(\omega_1,\omega_2) \sim \gamma$. Integration in $\tau$ yields, using Fubini's theorem,
\begin{equation}\label{K:16}
  \abs{E} \lesssim \Lmin^{12} \Abs{ \left\{ \xi \in R \colon f(\xi) = \tau_0 + O\bigl( \Lmax^{12} \bigr) \right\} },
\end{equation}
where
\begin{equation}\label{K:20}
  f(\xi) = \pm_1\abs{\xi}\pm_2\abs{\xi_0-\xi}.
\end{equation}
Let $\xi \in R$. Then
\begin{equation}\label{K:28}
  \theta(e_1,\omega_1) \le \gamma,
  \qquad
  \theta(e_2,\omega_2) \le \gamma,
  \qquad
  \theta(e_1,\omega_2) \ge 2\gamma,
\end{equation}
where the last inequality follows by writing, using also \eqref{K:8:6},
$$
  3\gamma \le \theta(\omega_1,\omega_2) \le
  \theta(\omega_1,e_1) + \theta(e_1,\omega_2)
  \le \gamma + \theta(e_1,\omega_2).
$$
Choose coordinates $(\xi^1,\xi^2,\xi^3)$ so that $\omega = (0,0,1)$ and $\omega_2 = (1,0,0)$ (we can do this in view of \eqref{K:8:4}). Then for all $\xi \in R$, noting that
$$
  \nabla f(\xi) = e_1 - e_2,
$$
we have
\begin{equation}\label{K:30}
  - \partial_1 f(\xi)
  = \cos\theta(e_2,\omega_2) - \cos\theta(e_1,\omega_2)
  \ge \cos \gamma - \cos 2\gamma
  \gtrsim \gamma^2,
\end{equation}
where we used \eqref{K:28}. Note also that $\abs{\xi^2} \lesssim \Nmin^{12}\gamma$ on $R$, since $R$ is within an angle comparable to $\gamma$ of the $\xi^1$-axis, and inside a ball of radius comparable to $\Nmin^{12}$ around the origin. Integrating next in the $\xi^1$-direction, and using Fubini's theorem and \eqref{K:30}, we therefore get from \eqref{K:16} that
$$
  \abs{E} \lesssim \Lmin^{12} \frac{\Lmax^{12}}{\gamma^2} \Abs{ \left\{ (\xi^2,\xi^3) \colon \abs{\xi^2} \lesssim \Nmin^{12}\gamma, \; \xi^3 \in I_1  \right\} }
  \lesssim \Lmin^{12} \frac{\Lmax^{12}}{\gamma^2} \, \delta \Nmin^{12}\gamma,
$$
proving \eqref{K:10}. This concludes the proof of Theorem \ref{Z:Thm1} for $\supp\widehat{u_2} \subset A_1$.

\subsection{The case $\supp \widehat{u_2} \subset A_2$}\label{K:38}

We claim that in this case, assuming also \eqref{K:3:4},
\begin{align}\label{K:50}
  \bignorm{u_1^{I_1;\gamma,\omega_2}
  u_2^{I_2;\gamma,\omega_2}}
  &\lesssim \left( \frac{\delta \Nmin^{12} L_1 L_2}{\alpha} \right)^{1/2}
  \bignorm{u_1^{I_1;\gamma,\omega_1}}
  \bignorm{u_2^{I_2;\gamma,\omega_2}},
  \\
  \label{K:52}
  \bignorm{u_1^{I_1;\gamma,\omega_2}
  u_2^{I_2;\gamma,\omega_2}}
  &\lesssim \left( \frac{\delta (\Nmin^{12}\gamma)^2 \Lmin^{12}}{\alpha} \right)^{1/2}
  \bignorm{u_1^{I_1;\gamma,\omega_1}}
  \bignorm{u_2^{I_2;\gamma,\omega_2}}.
\end{align}
The latter holds for $\supp \widehat{u_2} \subset A_2 \cup A_3$, in fact, and does not rely on\eqref{K:3:4}.

Granting the claim for the moment, note that the part of \eqref{K:3:2} where
$$
  0 < \gamma \lesssim \gamma_0 \equiv \left(\frac{\Lmax^{12}}{\Nmin^{12}}\right)^{1/2},
$$
we can dominate by, using \eqref{K:52} and summing $\omega_1,\omega_2$ as in \eqref{D:38}, 
$$
  \sum_{0 < \gamma \lesssim \gamma_0}\gamma
  \left( \frac{\delta (\Nmin^{12})^2 \Lmin^{12}}{\alpha} \right)^{1/2} 
  \bignorm{u_0}\bignorm{u_1^{I_1}}\bignorm{u_2^{I_2}},
$$
and since $\sum_{0 < \gamma \lesssim \gamma_0}\gamma \sim \gamma_0$, we get \eqref{K:3:1}.

It remains to consider
\begin{equation}\label{K:64}
  \gamma_0 \ll \gamma < \frac{\pi}{1000}.
\end{equation}
If we argue as above, this time using \eqref{K:50}, we get \eqref{K:3:1} up to a factor $\log 1/\gamma_0$, but we can avoid this logarithmic loss by exploiting orthogonality, as we now show.

Let $X_0=X_1+X_2$ be the bilinear interaction for the summand of \eqref{K:3:2}. By \eqref{K:64}, $\Nmin^{12}\gamma^2 \gg \Lmax^{12}$, and by \eqref{K:3:4} we have $\theta_{12} \sim \gamma$, so Lemma \ref{I:Lemma1} implies
\begin{equation}\label{K:66}
  \abs{\hypwt_0} \equiv \bigabs{\abs{\tau_0}-\abs{\xi_0}} \sim
  \left\{
  \begin{alignedat}{2}
  &\Nmin^{12}\gamma^2& \quad &\text{if $\pm_1=\pm_2$},
  \\
  &\frac{N_1N_2\gamma^2}{\abs{\xi_0}}& \quad &\text{if $\pm_1\neq\pm_2$}.
  \end{alignedat}
  \right.
\end{equation}

It suffices to consider the cases $(\pm_1,\pm_2) = (+,+), (+,-)$.

Take first $(+,+)$. Then we proceed essentially as in the example given at the end of section 9 in \cite{Tao:2001}. Since $\theta(\xi_1,\omega_1), \theta(\xi_2,\omega_2) \le \gamma$ and $\theta(\omega_1,\omega_2) \le M \gamma$,
\begin{equation}\label{K:68}
  \theta(\xi_0,\omega_1) \le M'\gamma.
\end{equation}
where $M'=M+1$.  Combining this with \eqref{K:66}, we write the sum in \eqref{K:3:2} as
$$
  S = \sum_{\gamma} \sum_{\omega_1,\omega_2}
  \iint
  \overline{\Proj_{\abs{\hypwt_0} \sim \Nmin^{12}\gamma^2} u_0^{M'\gamma,\omega_1}}
  u_1^{I_1;\gamma,\omega_1}
  u_2^{I_2;\gamma,\omega_2} \, dt \, dx.
$$
Applying \eqref{K:50},
\begin{align*}
  S
  &\lesssim \sum_{\gamma} \sum_{\omega_1,\omega_2}
  \left(\frac{\delta\Nmin^{12}L_1L_2}{\alpha}\right)^{1/2}
  \bignorm{\Proj_{\abs{\hypwt_0} \sim \Nmin^{12}\gamma^2} u_0^{M'\gamma,\omega_1}}
  \bignorm{u_1^{\gamma,\omega_1}}
  \bignorm{u_2^{\gamma,\omega_2}}
  \\
  &\le
  \left(\frac{\delta\Nmin^{12}L_1L_2}{\alpha}\right)^{1/2} A B,
\end{align*}
where
\begin{align}
  \label{K:70}
  A^2 &=
  \sum_{\gamma} \sum_{\omega_1,\omega_2}
  \bignorm{\Proj_{\abs{\hypwt_0} \sim \Nmin^{12}\gamma^2} u_0^{M'\gamma,\omega_1}}^2
  \sim
  \sum_{\gamma} \bignorm{\Proj_{\abs{\hypwt_0} \sim \Nmin^{12}\gamma^2} u_0}^2
  \sim \norm{u_0}^2,
  \\
  \label{K:72}
  B^2 &= \sum_{\gamma} \sum_{\omega_1,\omega_2}
  \bignorm{u_1^{I_1;\gamma,\omega_1}}^2
  \bignorm{u_2^{I_2;\gamma,\omega_2}}^2
  \sim \bignorm{u_1^{I_1}}^2 \bignorm{u_2^{I_2}}^2.
\end{align}
Here we used \eqref{D:206} and Lemma \ref{D:Lemma1} to get \eqref{K:70}, and we used Lemma \ref{D:Lemma2} to get \eqref{K:72}. This completes the proof of \eqref{K:3:1} for the case $(+,+)$.

Next, consider $(+,-)$. This is trickier because $\xi_1,\xi_2$ point roughly in opposite directions for small $\gamma$, so \eqref{K:68} may fail. But \eqref{K:68} still holds if $N_1 \ll N_2$ or $N_2 \ll N_1$, and then $\abs{\hypwt_0} \sim \Nmin^{12}\gamma^2$ by \eqref{K:66}, so the above argument applies. That leaves $N_1 \sim N_2$, but then we can in effect reduce to $(+,+)$, by writing
\begin{equation}\label{K:80}
  \bignorm{u_1^{I_1}u_2^{I_2}}
  \le
  \bignorm{u_1^{I_1}}_{L^4} \bignorm{u_2^{I_2}}_{L^4}
  =
  \bignorm{u_1^{I_1}u_1^{I_1}}^{1/2} \bignorm{u_2^{I_2}u_2^{I_2}}^{1/2}.
\end{equation}
Since $\widehat{u_2}$ is supported away from $\omega^\perp$, both factors on the right hand side can be estimated by the $(+,+)$ case (or equivalently $(-,-)$ case) that we just proved.

This concludes the proof of Theorem \ref{Z:Thm1} for $\supp\widehat{u_2} \subset A_2$, up to the claimed estimates \eqref{K:50} and \eqref{K:52}, which we now prove.

\subsection{Proof of \eqref{K:50}}\label{K:100}

We reduce to proving
\begin{equation}\label{K:102}
  \abs{E} \lesssim \frac{\delta\Nmin^{12}L_1L_2}{\alpha},
\end{equation}
where $E$ satisfies \eqref{K:12} for some $(\tau_0,\xi_0)$, but now with
\begin{multline*}
  R = \Bigl\{ \xi \colon  \; \abs{\xi} \sim N_1,
  \;
  \abs{\xi_0-\xi} \sim N_2,
  \;
  \xi \cdot \omega \in I_1,
  \;
  \theta(e_1,\omega_1) \le \gamma,
  \;
  \theta(e_2,\omega_2) \le \gamma,
  \\
  \theta(e_1,\omega) \le \frac{\pi}{2} - \alpha,
  \;
  \theta(e_2,\omega) \le \frac{\pi}{2} - \alpha \Bigr\}
\end{multline*}
and $e_1,e_2$ as in \eqref{K:24}. Then \eqref{K:16} holds, with $f$ given by \eqref{K:20}. Assume
$$
  N_1 \le N_2,
$$
by symmetry. We claim that we may also assume
\begin{equation}\label{K:110}
  \omega_1 \in \Gamma_{\pi/2-\alpha}(\omega)
  = \left\{ \xi \colon \theta(\xi,\omega) \le \frac{\pi}{2}-\alpha \right\}.
\end{equation}
Indeed, suppose $\omega_1$ fails to satisfy this condition. We do know, however, that
$$
  \pm_1 R \subset \Gamma_\gamma(\omega_1) \cap \Gamma_{\pi/2-\alpha}(\omega),
$$
hence $\pm_1 R$ can be covered by sectors $\Gamma_\gamma(\omega_1')$ with $\omega_1' \in \Gamma_\gamma(\omega_1) \cap \Gamma_{\pi/2-\alpha}(\omega)$, and the number of such sectors required is clearly $O(1)$. Thus, we can without loss of generality assume \eqref{K:110}. The sectors are still well-separated after this change:
\begin{equation}\label{K:114}
  \theta(\omega_1,\omega_2) \ge 15\gamma,
\end{equation}
since originally we had $\theta(\omega_1,\omega_2) \ge 16\gamma$.

Choose coordinates $(\xi^1,\xi^2,\xi^3)$ so that $\omega = (0,0,1)$ and (using \eqref{K:110})
\begin{equation}\label{K:116}
  \omega_1 = (\cos\beta,0,\sin\beta),
  \qquad \text{for some $\alpha \le \beta \le \frac{\pi}{2}$}.
\end{equation}
Let $\xi \in R$. Then
\begin{equation}\label{K:118}
  \theta(e_1,\omega_1) \le \gamma,
  \qquad
  \theta(e_2,\omega_1) \ge 14\gamma,
\end{equation}
where we used \eqref{K:114} to get the last inequality. Moreover,
\begin{equation}\label{K:120}
  e_1^3, e_2^3 \ge \sin \alpha.
\end{equation}
and from \eqref{K:116} and \eqref{K:118} we see that
\begin{equation}\label{K:124}
  a \equiv \cos(\beta+\gamma) \le e_1^1 \le b \equiv \cos (\beta-c\gamma),
  \qquad
  \abs{e_1^2} \le \sin\gamma,
\end{equation}
for some $c \in [0,1]$ ($c=1$ if $\beta-\gamma \ge \alpha$, otherwise $c=(\beta-\alpha)/\gamma$).

By \eqref{K:118}, $e_2$ lies outside a disk on $\mathbb S^2$ of radius $14\gamma$ around $\omega_1$. To simplify the geometry, we want to replace this disk by a slightly smaller set which projects onto a rectangle in the $(\xi^1,\xi^2)$-plane. To this end, we apply the following lemma:

\begin{lemma}\label{K:Lemma1}
Consider a disk $D \subset \mathbb S^2$ of radius $\theta$ around $\omega_1$, where $\omega_1$ is given by \eqref{K:116} for some $\beta \in (0,\pi/2]$.

\begin{enumerate}
\item\label{K:Lemma1a} Given $0 < x \le \sin \theta$, define
$$
  y = \sin\beta \sqrt{ \sin^2 \theta  - x^2 }.
$$
Then $D$ contains the subset of $\mathbb S^2_+ = \{ e \in \mathbb S^2 : e^3 >  0 \}$ whose projection onto the $(\xi^1,\xi^2)$-plane is the intersection of
$$
  R = \left\{ (\xi^1,\xi^2) \colon \abs{\xi^1-\cos\beta\cos\theta} \le y, \;\; \abs{\xi^2} \le x \right\}
$$
and the unit disk $(\xi^1)^2 + (\xi^2)^2 < 1$.

\item\label{K:Lemma1b} Suppose further that $\beta < \theta \le \pi/2$, so that the disk $D$ dips below the equator, i.e., the boundary of $\mathbb S^2_+$. Define
$$
  x = \sqrt{ 1 - \frac{\cos^2\theta}{\cos^2\beta}},
  \qquad
  y = \sin\beta \sqrt{ \sin^2 \theta  - x^2 }.
$$
Then $D$ contains the subset of $\mathbb S^2_+$ whose projection onto the $(\xi^1,\xi^2)$-plane is the intersection of
$$
  R' = \left\{ (\xi^1,\xi^2) \colon \xi^1 \ge \cos\beta\cos\theta - y,
  \;\;
  \abs{\xi^2} \le x \right\}
$$
and the unit disk $(\xi^1)^2 + (\xi^2)^2 < 1$.
\end{enumerate}
\end{lemma}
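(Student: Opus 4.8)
The plan is to prove parts \eqref{K:Lemma1a} and \eqref{K:Lemma1b} together by an explicit computation in the coordinates of \eqref{K:116}, where $\omega = (0,0,1)$ and $\omega_1 = (\cos\beta,0,\sin\beta)$. The point of $\mathbb S^2_+$ lying over a given $(\xi^1,\xi^2)$ in the open unit disk is $e = (\xi^1,\xi^2,h)$ with $h = \sqrt{1-(\xi^1)^2-(\xi^2)^2} > 0$, and $e$ belongs to the disk $D$ of radius $\theta$ about $\omega_1$ exactly when $e\cdot\omega_1 = \xi^1\cos\beta + h\sin\beta \ge \cos\theta$. So in either part it suffices to verify this one inequality for every $(\xi^1,\xi^2)$ lying in the relevant rectangle ($R$ in \eqref{K:Lemma1a}, $R'$ in \eqref{K:Lemma1b}) and in the open unit disk.

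The first step is a dichotomy. If $\xi^1\cos\beta \ge \cos\theta$, then $e\cdot\omega_1 \ge \xi^1\cos\beta \ge \cos\theta$ since $h\sin\beta \ge 0$, and there is nothing more to do. Otherwise $\cos\theta - \xi^1\cos\beta > 0$, so both sides of the desired inequality $h\sin\beta \ge \cos\theta - \xi^1\cos\beta$ are nonnegative; squaring and inserting $h^2 = 1-(\xi^1)^2-(\xi^2)^2$, and then completing the square in $\xi^1$, I would reduce it to the single ellipse inequality
\[
  \frac{(\xi^1 - \cos\theta\cos\beta)^2}{\sin^2\theta\sin^2\beta}
  + \frac{(\xi^2)^2}{\sin^2\theta} \le 1 .
\]
(Geometrically this ellipse is the projection of $\partial D$ onto the $(\xi^1,\xi^2)$-plane, which is why the rectangles in the statement are centered at $(\cos\theta\cos\beta,0)$ rather than at the projection $(\cos\beta,0)$ of $\omega_1$.) Thus both parts come down to checking that the rectangle, restricted to the region $\xi^1\cos\beta < \cos\theta$, sits inside this ellipse.

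For part \eqref{K:Lemma1a} this is immediate: on $R$ one has $\abs{\xi^1 - \cos\theta\cos\beta} \le y$ and $\abs{\xi^2} \le x$, and substituting $y = \sin\beta\sqrt{\sin^2\theta - x^2}$ turns the left-hand side of the ellipse inequality into at most $(\sin^2\theta - x^2)/\sin^2\theta + x^2/\sin^2\theta = 1$. For part \eqref{K:Lemma1b} the rectangle $R'$ only constrains $\xi^1$ from below, so here I would feed in the case hypothesis: using $\beta < \theta \le \pi/2$ one computes $\sqrt{\sin^2\theta - x^2} = (\cos\theta/\cos\beta)\sin\beta$, hence $y = \cos\theta\sin^2\beta/\cos\beta$ and therefore $\cos\theta/\cos\beta - \cos\theta\cos\beta = y$; combining $\xi^1 \ge \cos\theta\cos\beta - y$ with the hypothesis $\xi^1\cos\beta < \cos\theta$ (i.e. $\xi^1 < \cos\theta/\cos\beta$) again yields $\abs{\xi^1 - \cos\theta\cos\beta} \le y$, and the computation of part \eqref{K:Lemma1a} applies verbatim.

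The algebra is routine; the only genuinely delicate point is the bookkeeping around the degenerate configuration in which $D$ dips below the equator $e^3=0$ — precisely the case $\beta < \theta$ isolated in part \eqref{K:Lemma1b}. There a symmetric box cannot be inscribed, it must be replaced by the one-sided strip $R'$, and one must be sure that the missing upper bound on $\xi^1$ is supplied for free by the sign condition $\cos\theta - \xi^1\cos\beta > 0$ used in the dichotomy. I would also dispose of the boundary values $\beta = \pi/2$ in part \eqref{K:Lemma1a} and $\theta = \pi/2$ (so $x=1$, $y=0$) in part \eqref{K:Lemma1b} by inspection, since in each the argument above degenerates without incident.
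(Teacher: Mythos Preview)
Your proof is correct and follows essentially the same approach as the paper: both identify the ellipse $\frac{(\xi^1-\cos\beta\cos\theta)^2}{\sin^2\theta\sin^2\beta}+\frac{(\xi^2)^2}{\sin^2\theta}\le 1$ as the projection of $\partial D$ and check that the rectangle is inscribed in it. Your presentation is a direct algebraic verification via the inner-product condition $e\cdot\omega_1\ge\cos\theta$ and a clean dichotomy on the sign of $\cos\theta-\xi^1\cos\beta$, whereas the paper argues geometrically; in particular your dichotomy makes explicit precisely why the upper bound on $\xi^1$ can be dropped in part~\eqref{K:Lemma1b}, which the paper only describes informally.
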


The proof is given in section \ref{G}.

Applying part \eqref{K:Lemma1a} of the lemma, with $\theta=14\gamma$, $x=\sin4\gamma$ and
\begin{equation}\label{K:128}
  y = \sin\beta \sqrt{ \sin^2 14\gamma - \sin^2 4\gamma },
\end{equation}
we conclude from \eqref{K:118} that
\begin{equation}\label{K:130}
  \Abs{e_2^2} \ge \sin 4\gamma,
\end{equation}
or
\begin{equation}\label{K:132}
  e_2^1 \notin [a',b'],
  \qquad
  a'=\cos\beta \cos 14\gamma - y,
  \qquad
  b'=\cos\beta \cos 14\gamma  + y.
\end{equation}

First suppose \eqref{K:130} holds for some $\xi \in R$. Since the angle between any two $e_2$'s is no larger than $2\gamma$, it follows that $\Abs{e_2^2} \ge \sin 2\gamma$ for all $\xi \in R$, so by \eqref{K:124},
$$
  \Abs{\partial_2 f(\xi)} = \Abs{e_1^2 - e_2^2}
  \ge \sin 2\gamma - \sin\gamma \gtrsim \gamma
  \qquad (\forall \xi \in R),
$$
Thus, integrating next in the $\xi^2$-direction, we see from \eqref{K:16} that
\begin{equation}\label{K:134}
  \Abs{E} 
  \lesssim \Lmin^{12} \frac{\Lmax^{12}}{\gamma} \Abs{P_{(\xi^1,\xi^3)}(R)},
\end{equation}
where $P_{(\xi^1,\xi^3)}$ is the projection onto the $(\xi^1,\xi^3)$-plane. But clearly,
\begin{equation}\label{K:136}
  \Abs{P_{(\xi^1,\xi^3)}(R)} \lesssim \frac{r N_1\gamma}{\beta},
\end{equation}
since, by our choice of coordinates, $\xi^3$ is restricted to an interval of length $r$, and $\xi$ lies within an angle $\gamma$ of $\omega_1 = (\cos\beta,0,\sin\beta)$ and at distance $\sim N_1$ from the origin. By \eqref{K:134} and \eqref{K:136}, we get \eqref{K:102} for the case where \eqref{K:130} holds for some $\xi \in R$.

It remains to consider the case where \eqref{K:132} holds for all $\xi \in R$. We shall use
\begin{equation}\label{K:138}
  (1-\varepsilon) \theta \le \sin \theta \le \theta  \qquad \text{for $0 < \theta \le \frac{14\pi}{1000}$, where $\varepsilon = 10^{-3}$}.
\end{equation}
Thus, $y \ge 13 \gamma \sin\beta$, hence
\begin{equation}\label{K:150}
  a-a'
  \ge \cos\beta \left( \cos\gamma - \cos 14\gamma \right)
  + \sin\beta \left( 13\gamma - \sin\gamma \right)
  \ge
  12\gamma \sin\beta.
\end{equation}
Moreover, using also the fact that $1-\cos\theta \le \theta^2/2$ for all $\theta$,
\begin{align*}
  b'-b
  &\ge - \cos\beta \left( \cos c\gamma - \cos 14\gamma \right)
  + \sin\beta \left( 13\gamma - \sin c \gamma \right)
  \\
  &\ge -\left( 1 - \cos 14\gamma \right)
  + \sin\beta \left( 13\gamma - \gamma \right)
  \\
  &\ge - \frac{(14\gamma)^2}{2}
  +  12\gamma \sin\beta 
  \ge
  12\gamma \left( \sin\beta-9\gamma \right)
  \ge
  12\gamma \left( \sin\beta-\sin 10\gamma \right),
\end{align*}
which implies
\begin{equation}\label{K:154}
  b'-b \gtrsim \beta\gamma \qquad \text{if $\beta \ge 11\gamma$},
\end{equation}
hence it is natural to split into the cases $\beta \ge 11\gamma$ and $\beta < 11\gamma$.

Assume first $\beta \ge 11\gamma$. Then by \eqref{K:124}, \eqref{K:132}, \eqref{K:150} and \eqref{K:154},
$$
  \Abs{\partial_1 f(\xi)} = \Abs{e_1^1 - e_2^1}
  \ge \min(a-a', b'-b) \gtrsim \beta\gamma
$$
for all $\xi \in R$, so integrating next in the $\xi^1$-direction we get
$$
  \Abs{E} 
  \lesssim \Lmin^{12} \frac{\Lmax^{12}}{\beta\gamma} \Abs{P_{(\xi^2,\xi^3)}(R)},
$$
But by our choice of coordinates,
$$
  \Abs{P_{(\xi^2,\xi^3)}(R)}
  \le
  \Abs{ \left\{ (\xi^2,\xi^3) \colon
  \abs{\xi^2} \lesssim N_1\gamma, \; \xi^3 \in I_1 \right\} }
  \lesssim \delta N_1\gamma,
$$
so we get \eqref{K:102}, recalling that $\beta \ge \alpha$.

Next, consider
\begin{equation}\label{K:160}
  \beta < 11\gamma.
\end{equation}
Then we use part \eqref{K:Lemma1b} of Lemma \ref{K:Lemma1}, concluding that $e_2$ must satisfy
\begin{equation}\label{K:162}
  \Abs{e_2^2} \ge x \equiv \sqrt{1-\frac{\cos^2 14\gamma}{\cos^2\beta}}
  = \frac{\sqrt{\sin^2 14\gamma - \sin^2\beta}}{\cos\beta},
\end{equation}
or
\begin{equation}\label{K:164}
  e_2^1 \le a'' \equiv \cos\beta \cos 14\gamma - \sin\beta \sqrt{ \sin^2 14\gamma - x^2 }.
\end{equation}
By \eqref{K:138} and \eqref{K:160},
$x \ge \sqrt{(1-\varepsilon)^2(14\gamma)^2 - (11\gamma)^2} \ge 8\gamma$, so \eqref{K:162} is stronger than \eqref{K:130}, hence we know how to deal with it. This leaves the case where \eqref{K:164} holds for all $\xi \in R$. By \eqref{K:160}, $\beta < 11\pi/1000$, hence $\cos\beta \ge 1-\varepsilon$, where $\varepsilon = 10^{-3}$, so
\begin{align*}
  a-a''
  &= \cos\beta(\cos\gamma - \cos 14\gamma)
  + \sin\beta \left( \sqrt{ \sin^2 14\gamma - x^2 } - \sin\gamma \right)
  \\
  &\ge (1-\varepsilon)(\cos\gamma - \cos 14\gamma) 
  - \sin\beta\sin\gamma
  \\
  &\ge (1-\varepsilon)13\gamma\sin\gamma 
  - 11\gamma^2
  \ge (1-\varepsilon)^213\gamma^2 
  - 11\gamma^2 \ge \gamma^2 \ge \frac{\beta\gamma}{11},
\end{align*}
which replaces \eqref{K:150}, hence we can integrate in the $\xi^1$-direction.

This concludes the proof of \eqref{K:50}.

\subsection{Proof of \eqref{K:52}}\label{K:200}

Assuming $\supp\widehat{u_2} \subset A_2 \cup A_3$, but not \eqref{K:3:4}, we need
$$
  \abs{E} \lesssim \frac{\delta(\Nmin^{12}\gamma)^2 \Lmin^{12}}{\alpha},
$$
where $E$ satisfies \eqref{K:12} for some $(\tau_0,\xi_0)$, but now with
\begin{multline*}
  R = \Bigl\{ \xi \colon  \; \abs{\xi} \sim N_1,
  \;
  \abs{\xi_0-\xi} \sim N_2,
  \;
  \xi \cdot \omega \in I_1,
  \;
  \theta(e_1,\omega_1) \le \gamma,
  \;
  \theta(e_2,\omega_2) \le \gamma,
  \\
  \theta(e_1,\omega) \le \frac{\pi}{2} - \alpha,
  \;
  \theta(e_2,\R\omega) \le \frac{\pi}{2} - \alpha
  \Bigr\}.
\end{multline*}
By symmetry, we may assume $N_1 \le N_2$, and then we simplify to
$$
  R = \left\{ \xi \colon  \; \abs{\xi} \sim N_1,
  \;
  \xi \cdot \omega \in I_1,
  \;
  \theta(e_1,\omega_1) \le \gamma,
  \; 
  \theta(e_1,\omega^\perp) \ge \alpha
  \right\}.
$$
Integrating $\tau$ yields $\Abs{E} \lesssim \Lmin^{12} \Abs{R}$, so it suffices to show $ \Abs{R} \lesssim \delta(N_1\gamma)^2/\alpha$, but this is easy; we omit the details.

\subsection{The case $\supp \widehat{u_2} \subset A_3$}\label{K:298} The trick \eqref{K:80} takes care of the case $N_1 \sim N_2$, effectively reducing to $\supp \widehat{u_2} \subset A_2$. Thus, it suffices to consider, by symmetry,
\begin{equation}\label{K:300}
  N_1 \ll N_2.
\end{equation}
Now we repeat the argument from subsection \ref{K:38}. We know that \eqref{K:52} is valid, so we just need to show that \eqref{K:50} holds, under the additional assumption \eqref{K:300}. Again we reduce to proving \eqref{K:102}, but now with
\begin{multline*}
  R = \Bigl\{ \xi \colon  \; \abs{\xi} \sim N_1,
  \;
  \abs{\xi_0-\xi} \sim N_2,
  \;
  \xi \cdot \omega \in I_1,
  \;
  \theta(e_1,\omega_1) \le \gamma,
  \;
  \theta(e_2,\omega_2) \le \gamma,
  \\
  \theta(e_1,\omega) \le \frac{\pi}{2} - \alpha,
  \;
  \theta(e_2,-\omega) \le \frac{\pi}{2} - \alpha \Bigr\}.
\end{multline*}
We assume $R \neq \emptyset$, hence $\gamma \gtrsim \alpha$. For $\xi \in R$, $\abs{\nabla f(\xi)} = \abs{e_1-e_2} \sim \gamma$, but $e_1,e_2$ can be symmetrically placed about the $(\xi^1,\xi^2)$-plane, hence $(\partial_1 f,\partial_2 f)$ may vanish, and then we have no choice but to integrate in the direction $\xi^3$. Since
\begin{equation}\label{K:314}
  \partial_3 f(\xi) = e_1^3 - e_2^3 \ge 2\sin\alpha 
  \qquad (\forall \xi \in R),
\end{equation}
we then get, from \eqref{K:16},
$$
  \Abs{E} \lesssim \frac{L_1L_2}{\alpha} \Abs{ P_{(\xi^1,\xi^2)}(B \cap S \cap \{ \xi \colon \xi^3 \in I \} )},
$$
where
$$
  B = \left\{ \xi \colon \abs{\xi} \lesssim N_1 \right\},
  \qquad
  S = \left\{ \xi \colon f(\xi) = \tau_0 + O\bigl( \Lmax^{12} \bigr) \right\},
$$
and $f$ is given by \eqref{K:20}. Thus, it will be enough to show that
\begin{equation}\label{K:320}
  \Abs{ P_{(\xi^1,\xi^2)}(B \cap S \cap \{ \xi \colon \xi^3 \in I \})}
  \lesssim \delta N_1.
\end{equation}
Of course, this may fail if $\Lmax^{12}$ is large, but by the argument in Remark \ref{C:Rem}, we can assume $L_1,L_2 > 0$ arbitrarily small in \eqref{K:50}. In particular, we may assume
\begin{equation}\label{K:324}
  \Lmax^{12} \ll N_1\gamma^2.
\end{equation}
Then by Lemma \ref{I:Lemma1},
\begin{equation}\label{K:328}
  \bigabs{\abs{\tau_0} - \abs{\xi_0}} \sim N_1\gamma^2,
  \qquad \abs{\tau_0} \sim \abs{\xi_0} \sim N_2,
\end{equation}
where we also used \eqref{K:300}, which implies $\abs{\xi_0} \sim N_2$.

The set $S$ is a thickening of the surface
$$
  S_0 = \left\{ \xi \colon f(\xi) = \tau_0 \right\},
$$
which is an ellipsoid if $\pm_1=\pm_2$, or one sheet of a hyperboloid if $\pm_1\neq\pm_2$, both with foci at $0$ and $\xi_0$, and rotationally symmetric about the axis through the foci. The major and minor semiaxes of $S_0$, denoted $a$ and $b$, respectively, are given by
\begin{equation}\label{K:340}
  2a = \abs{\tau_0} \sim N_2,
  \qquad
  2b = \bigabs{\tau_0^2 - \abs{\xi_0}^2}^{1/2} \sim (N_1N_2)^{1/2} \gamma.
\end{equation}
Since $\abs{\nabla f(\xi)} = \abs{e_1-e_2} = 2b(\abs{\xi}\abs{\xi_0-\xi})^{-1/2}$ for $\xi \in S_0$, we have
\begin{equation}\label{K:342}
  \abs{\nabla f(\xi)}
  \gtrsim \gamma \qquad \text{on $B \cap S_0$},
\end{equation}
hence $B \cap S$ is contained in an $\varepsilon$-neighborhood of $S_0$, where $\varepsilon \sim \Lmax^{12}/\gamma$. But we can assume $\varepsilon$ arbitrarily small, since $\Lmax^{12}$ is arbitrarily small. So let us assume
\begin{equation}\label{K:346}
  \varepsilon \ll \delta, \qquad \varepsilon \ll N_1\gamma^2.
\end{equation}

The minimal radius of curvature on $S_0$, which we denote $R_*$, satisfies
$$
  R_* \sim \frac{b^2}{a} \sim N_1\gamma^2,
$$
so the second inequality in \eqref{K:346} guarantees that the $\varepsilon$-neighborhood of $S_0$ is in fact a tubular neighborhood.

Let the interval $I^*$ have the same center as $I$ but twice the length. Then for any $p \in S_0 \cap \{ \xi \colon \xi^3 \in I \}$, there is a disk $D \subset S_0$ centered at $p$ and of radius $r$, such that
$$
  r \sim \min\left(\delta,\sqrt{R_* \delta}\right) \gg \varepsilon,
  \qquad D \subset S_0 \cap \{ \xi \colon \xi^3 \in I^* \}.
$$
Thus, $S_0 \cap \{ \xi \colon \xi^3 \in I^* \}$ cannot be ``narrower'' than $\varepsilon$ anywhere.

Let $M$ be the number of $\varepsilon$-cubes $Q$ needed to cover
$$
  S' = B \cap S \cap \{ \xi \colon \xi^3 \in I \}.
$$
Since the $\varepsilon$-neighborhood of $S_0$ is tubular, it suffices to consider cubes centered on $S_0$, and since $S_0 \cap \{ \xi \colon \xi^3 \in I^* \}$ cannot be ``narrower'' than $\varepsilon$, we conclude that
$$
  M = O\left( \frac{A}{\varepsilon^2} \right),
  \qquad
  \text{where}
  \qquad
  A = \sigma(B \cap S_0 \cap \{ \xi \colon \xi^3 \in I^* \}).
$$
Therefore, the area of the projection of $S'$ onto any plane in $\R^3$ is $O(\varepsilon^2 M) = O(A)$, and this proves \eqref{K:320} provided that we can show
$$
  A \lesssim N_1 \delta,
$$
but this holds by Theorem \ref{Z:Thm2}, since $b^2/a \sim N_1\gamma^2 \ll N_2 \sim a$.

This concludes the proof of Theorem \ref{Z:Thm1}.

\section{Proofs of the null form estimates}\label{E}

\subsection{Proof of Theorem \ref{N:Thm2}}\label{E:2}

By duality, write the estimate in Theorem \ref{N:Thm2} as
\begin{equation}\label{E:24}
  \iint \overline{u_0}\, \nullform(\Proj_{\R \times T_r(\omega)} u_1,u_2) \, dt \, dx
  \lesssim \left( r^2 L_1 L_2 \right)^{1/2}
  \norm{u_0}
  \norm{u_1}
  \norm{u_2},
\end{equation}
where $u_0,u_1,u_2 \in L^2(\R^{1+3})$ and without loss of generality $\widehat{u_j} \ge 0$ for $j=0,1,2$. As usual, $u_1,u_2$ are assumed to satisfy \eqref{A:120}. By Lemma \ref{D:Lemma2},
\begin{equation}\label{E:18}
  \text{l.h.s.}\eqref{E:24}
  \sim
  \sum_{\gamma} \sum_{\omega_1,\omega_2}
  \gamma
  \iint
  \overline{u_0}
  \left( \Proj_{\R \times T_r(\omega)} u_1^{\gamma,\omega_1} \right)
  u_2^{\gamma,\omega_2} \, dt \, dx,
\end{equation}
where the sum is over dyadic $0 < \gamma < 1$ and $\omega_1,\omega_2 \in \Omega(\gamma)$ with
\begin{equation}\label{E:19}
  3\gamma \le \theta(\omega_1,\omega_2) \le 12\gamma.
\end{equation}
We claim that
\begin{align}
  \label{E:20}
  \norm{\Proj_{\R \times T_r(\omega)} u_1^{\gamma,\omega_1}
  \cdot u_2^{\gamma,\omega_2}}
  &\lesssim \left( \frac{r^2 L_1 L_2}{\gamma^2} \right)^{1/2}
  \norm{u_1^{\gamma,\omega_1}}
  \norm{u_2^{\gamma,\omega_2}},
  \\
  \label{E:22}
  \norm{\Proj_{\R \times T_r(\omega)} u_1^{\gamma,\omega_1}
  \cdot u_2^{\gamma,\omega_2}}
  &\lesssim \left( r^2 \Nmin^{12} \Lmin^{12} \right)^{1/2}
  \norm{u_1^{\gamma,\omega_1}}
  \norm{u_2^{\gamma,\omega_2}},
  \\
  \label{E:21}
  \norm{\Proj_{\R \times T_r(\omega)} u_1^{\gamma,\omega_1}
  \cdot u_2^{\gamma,\omega_2}}
  &\lesssim \left( (\Nmin^{12})^2 L_1 L_2 \right)^{1/2}
  \norm{u_1^{\gamma,\omega_1}}
  \norm{u_2^{\gamma,\omega_2}}.
\end{align}
The first two are proved in subsection \ref{E:38:2}; the last one hods by Theorem \ref{A:Thm1}.

Arguing as in subsection \ref{K:38}, and using either \eqref{E:22} or \eqref{E:21}, we get the desired estimate for that part of \eqref{E:18} which corresponds to
\begin{equation}\label{E:23}
  0 < \gamma \lesssim \gamma_0 \equiv \max \left( 
  \left(\frac{\Lmax^{12}}{\Nmin^{12}}\right)^{1/2},
  \frac{r}{\Nmin^{12}} \right),
\end{equation}
so for the remainder of this subsection we restrict the sum to
\begin{equation}\label{E:25}
  \gamma_0 \ll \gamma < 1,
\end{equation}
and then we use the estimate \eqref{E:20}. To avoid a logaritmic loss we argue as in subsection \ref{K:38}, but use also the  fact that since $\xi_1 \in T_r(\omega)$ and $\abs{\xi_1} \sim N_1$, we may assume (replacing $\omega$ by $-\omega$ if necessary)
\begin{equation}\label{E:26:4}
  \theta(\pm_1\xi_1,\omega) \lesssim \frac{r}{N_1} \ll \gamma,
\end{equation}
where the last inequality is due to \eqref{E:25}. Moreover, $\theta(\pm_1\xi_1,\omega_1) \le \gamma$, hence
\begin{equation}\label{E:26:2}
  \theta(\omega_1,\omega) \le \frac32 \gamma,
\end{equation}
implying that $\omega_1 \in \Omega(\gamma)$ is essentially uniquely determined, hence so is $\omega_2$.

By \eqref{E:25}, $\Nmin^{12}\gamma^2 \gg \Lmax^{12}$, and by \eqref{E:19}, $\theta_{12} \sim \gamma$, hence \eqref{K:66} holds.

It suffices to consider $(\pm_1,\pm_2) = (+,+), (+,-)$. For $(+,+)$ we proceed almost exactly as in subsection \ref{K:38}, so we omit the details.

Now consider $(+,-)$. Then the argument for $(+,+)$ applies if $N_1 \ll N_2$ or $N_2 \ll N_1$, but not if $N_1 \sim N_2$. In subsection \ref{K:38} we dealt with the latter by reducing to linear estimates, which again puts us into the $(+,+)$ case. This does not work here, however, since our estimate is not symmetric. Instead, we proceed as for $(+,+)$, but use also the crucial additional fact that $\omega_1,\omega_2$ are essentially uniquely determined, as shown above. Using \eqref{K:66} we write \eqref{E:18} as
$$
  \sum_{\gamma} \sum_{\omega_1,\omega_2}
  \gamma
  \iint
  \overline{\Proj_{\abs{\hypwt_0} \sim \frac{N_1N_2\gamma^2}{\abs{\xi_0}}} u_0}
  \left( \Proj_{\R \times T_r(\omega)} u_1^{\gamma,\omega_1} \right)
  u_2^{\gamma,\omega_2} \, dt \, dx
  \lesssim
  \left(r^2L_1L_2\right)^{1/2} A B.
$$
Here we used \eqref{E:20}, $B$ is defined as in \eqref{K:72} (but without the $I_1,I_2$, of course), and
$$
  A^2 =
  \sum_{\gamma}
  \bignorm{\Proj_{\abs{\hypwt_0} \sim \frac{N_1 N_2\gamma^2}{\abs{\xi_0}}} u_0}^2,
$$
in view of \eqref{E:26:2}. Clearly, $A^2 \lesssim \norm{u_0}^2$, so we are done.

This completes the proof of Theorem \ref{N:Thm2}, up to the estimates \eqref{E:20} and \eqref{E:22}, which we prove in the following subsection.

\subsection{Proof of \eqref{E:20} and \eqref{E:22}}\label{E:38:2} First, \eqref{E:20} reduces to
\begin{equation}\label{E:40}
  \abs{E} \lesssim \frac{r^2 L_1 L_2}{\gamma^2},
\end{equation}
where $E$ satisfies \eqref{K:12} with $R$ given by
$$
  R = \bigl\{ \xi \in T_r(\omega) \colon  \abs{\xi} \sim N_1, \; \theta(e_1,\omega_1) \le \gamma, \;
  \; \theta(e_2,\omega_2) \le \gamma \bigr\},
$$
with $e_1,e_2$ as in \eqref{K:24}. Then \eqref{K:16} holds with $f$ as in \eqref{K:20}. Let $\xi \in R$. By \eqref{E:19},
\begin{equation}\label{E:46}
  \theta(e_2,\omega_1) \ge 2\gamma.
\end{equation}
We may assume $r \ll N_1\gamma$ (otherwise \eqref{E:20} holds by \eqref{E:21}) hence \eqref{E:26:4} holds, i.e., 
\begin{equation}\label{E:48}
  \theta(e_1,\omega) \lesssim \frac{r}{N_1} \ll \gamma.
\end{equation}
Since $\theta(e_2,\omega_1) \le \theta(e_2,\omega) + \theta(\omega,e_1) + \theta(e_1,\omega_1)$, \eqref{E:46} and \eqref{E:48} imply:
\begin{equation}\label{E:50}
  \theta(e_2,\omega) \ge \frac12 \gamma.
\end{equation}
Combining \eqref{E:48} and \eqref{E:50} gives, for $\xi \in R$,
$$
  \nabla f(\xi) \cdot \omega = (e_1-e_2)\cdot\omega
  = \cos \theta(e_1,\omega) - \cos \theta(e_2,\omega)
  \ge \cos\frac14\gamma-\cos\frac12 \gamma \sim \gamma^2,
$$
so integrating next in the direction $\omega$, we see from \eqref{K:16} that
$$
  \abs{E} \lesssim \Lmin^{12} \frac{\Lmax^{12}}{\gamma^2}
  \bigabs{P_{\omega^\perp}(T_r(\omega))}
  \lesssim \Lmin^{12} \frac{\Lmax^{12}}{\gamma^2}
  r^2,
$$
where $P_{\omega^\perp}$ is the projection onto $\omega_1^\perp$. This proves \eqref{E:40}.

Finally, \eqref{E:22} reduces to the trivial estimate $\abs{R} \lesssim r^2 \Nmin^{12}$, where $R$ is the set of $\xi \in T_r(\omega)$ such that $\abs{\xi} \lesssim N_1$ and $\abs{\xi_0-\xi} \lesssim N_2$, for some fixed $\xi_0$.

\subsection{Proof of Theorem \ref{N:Thm3}}

Arguing as in the proof of Theorem \ref{N:Thm2}, we reduce to proving that
\begin{equation}\label{E:52}
  \norm{\Proj_{\R \times B} u_1^{\gamma,\omega_2}
  \cdot u_2^{\gamma,\omega_2}}
  \lesssim C
  \norm{u_1^{\gamma,\omega_1}}
  \norm{u_2^{\gamma,\omega_2}}
\end{equation}
holds with $C^2 \sim r^2 L_1 L_2/\gamma$ and $C^2 \sim r^3 \Lmin^{12}$, and we further reduce to proving the corresponding volume bounds for $E$ satisfying \eqref{K:12} with
$$
  R = \bigl\{ \xi \in B \colon \theta(e_1,\omega_1) \le \gamma, \;
  \; \theta(e_2,\omega_2) \le \gamma \bigr\},
$$
where $e_1,e_2$ are defined as in \eqref{K:24}. In view of \eqref{E:19}, $\Gamma_\gamma(\omega_1), \Gamma_\gamma(\omega_2)$ are $\gamma$-separated, so it is clearly possible to choose the coordinates $(\xi^1,\xi^2,\xi^3)$ so that $e_1^1-e_2^1 \sim \gamma$ for all $\xi \in R$. Thus, $\partial_1 f \sim \gamma$ on $R$, so from \eqref{K:16} we get
$$
  \abs{E} \lesssim \Lmin^{12} \frac{\Lmax^{12}}{\gamma}
  \Abs{ \left\{(\xi^2,\xi^3) \colon \abs{\xi^2-\xi_*^2} \lesssim r,
  \; \abs{\xi^3-\xi_*^3} \lesssim r \right\} }
  \lesssim \Lmin^{12} \frac{\Lmax^{12}}{\gamma} r^2,
$$
where $\xi_*$ is the center of $B$. This proves \eqref{E:52} with $C^2 \sim r^2 L_1 L_2/\gamma$. The other estimate reduces to $\Abs{E} \lesssim r^3 \Lmin^{12}$, but this is trivial, since $\abs{R} \lesssim r^3$.

This concludes the proof of Theorem \ref{N:Thm3}.

\section{Proof of the concentration/nonconcentration estimate}\label{J}

Write the estimate in Theorem \ref{N:Thm4} in the dual form \eqref{E:24}, but with $\Proj_{\xi_0 \cdot \omega \in I_0}$ inserted in front of the null form $\nullform$. By Lemma \ref{D:Lemma2} we then reduce to proving
\begin{multline}\label{J:190}
  \sum_{\gamma} \sum_{\omega_1,\omega_2}
  \gamma
  \iint
  \overline{u_0}
  \, \Proj_{\xi_0 \cdot \omega \in I_0}
  \left( \Proj_{\R \times T_r(\omega)} u_1^{\gamma,\omega_1} \right)
  u_2^{\gamma,\omega_2} \, dt \, dx
  \\
  \lesssim
  \left(r^2L_1L_2\right)^{1/2}
  \norm{u_0}
  \left(\sup_{I_1} \norm{\Proj_{\xi_1 \cdot \omega \in I_1} u_1}\right)
  \norm{u_2}
\end{multline}
where the sum is over dyadic $\gamma$  and $\omega_1,\omega_2 \in \Omega(\gamma)$ satisfying
\begin{equation}\label{J:194}
  0 < \gamma \ll 1, \qquad 3\gamma \le \theta(\omega_1,\omega_2) \le 12\gamma.
\end{equation}
For $\gamma$ satisfying \eqref{E:23}, we argue as in the proof of Theorem \ref{N:Thm2} in subsection \ref{E:2}, but instead of \eqref{E:22} and \eqref{E:21} we use the estimates (proved below)
\begin{align}
  \label{J:200}
  \norm{\Proj_{\xi_0 \cdot \omega \in I_0} \left(
  \Proj_{\R \times T_r(\omega)} u_1^{\gamma,\omega_1} \cdot u_2^{\gamma,\omega_2} \right)}
  &\lesssim \left( r^2 \abs{I_0} \Lmin^{12} \right)^{1/2} 
  \norm{u_1^{\gamma,\omega_1}}
  \norm{u_2^{\gamma,\omega_2}},
  \\
  \label{J:202}
  \norm{\Proj_{\xi_0 \cdot \omega \in I_0} \left(
  \Proj_{\R \times T_r(\omega)} u_1^{\gamma,\omega_1} \cdot u_2^{\gamma,\omega_2} \right)}
  &\lesssim \left( \abs{I_0} \Nmin^{12} L_1 L_2 \right)^{1/2} 
  \norm{u_1^{\gamma,\omega_1}}
  \norm{u_2^{\gamma,\omega_2}}.
\end{align}
If we also tile by the condition $\xi_0 \cdot \omega \in I_0$, then we see that the part of \eqref{J:190} corresponding to \eqref{E:23} is dominated by
\begin{equation}\label{J:210}
  \left(\frac{\abs{I_0}}{\Nmin^{12}}\right)^{1/2}
  \left(r^2L_1L_2\right)^{1/2}
  \sum_{I_1,I_2}
  \bignorm{u_0}
  \bignorm{u_1^{I_1}}
  \bignorm{u_2^{I_2}}
  \qquad \left( u_j^{I_j} = \Proj_{\xi_j \cdot \omega \in I_j} u_j \right),
\end{equation}
where $I_1, I_2$ belong to the almost disjoint cover of $\R$ by translates of $I_0$, and the sum is restricted by the condition $(I_1 + I_2) \cap I_0 \neq \emptyset$, hence the sum is over a set of cardinality comparable to $\Nmin^{12}/\abs{I_0}$, and each $I_1$ can interact with at most three different $I_2$'s. Thus, sup'ing over $I_1$ and summing $I_2$ using the Cauchy-Schwarz inequality, we get the bound in the right hand side of \eqref{J:190}.

Now it only remains to consider $\gamma_0 \ll \gamma \ll 1$ with $\gamma_0$ defined as in \eqref{E:23}, and then we use the estimate
\begin{equation}\label{J:220}
  \norm{\Proj_{\xi_0 \cdot \omega \in I_0} \left(
  \Proj_{\R \times T_r(\omega)} u_1^{\gamma,\omega_1} \cdot u_2^{\gamma,\omega_2} \right)}
  \lesssim \left( \frac{r^2L_1L_2}{\gamma^2} \right)^{1/2} 
  \left( \sup_{I_1} \bignorm{u_1^{I_1}} \right)
  \bignorm{u_2^{\gamma,\omega_2}},
\end{equation}
which is proved below. To avoid a logarithmic loss, we repeat the argument given at the end of subsection \ref{E:2}, the only difference being that we cannot define $B$ as in \eqref{K:72}, due to the supremum on the norm of $u_1$. Instead, $B$ is now given by
$$
  B^2 = \left(\sup_{I_1} \bignorm{u_1^{I_1}}\right)^2
  \sum_{\gamma} \sum_{\omega_1,\omega_2}
  \bignorm{u_2^{I_2;\gamma,\omega_2}}^2.
$$
But since $\gamma \gg \gamma_0$, we have $r \ll \Nmin^{12}\gamma$, hence we may assume \eqref{E:26:2}. Thus, $\omega_1 \in \Omega(\gamma)$ is essentially uniquely determined, and using also \eqref{J:194} we see that $\theta(\omega_2,\omega) \ge (3/2)\gamma$. Thus,
$$
  \sum_{\gamma} \sum_{\omega_1,\omega_2}
  \bignorm{u_2^{I_2;\gamma,\omega_2}}^2
  \sim
  \sum_{\gamma}
  \bignorm{\Proj_{\theta(\pm_2\xi_2,\omega) \sim \gamma} u_2^{I_2}}^2
  \sim \bignorm{u_2^{I_2}}^2,
$$
so the argument at the end of subsection \ref{E:2} goes through.

It now remains to prove the claimed estimates \eqref{J:200}, \eqref{J:202} and \eqref{J:220}. Then the proof of Theorem \ref{N:Thm4} will be complete.

Observe that \eqref{J:200} follows by an obvious modification of the proof of \eqref{E:22}, given at the end of subsection \ref{E:38:2}. The estimate \eqref{J:202} follows from Theorem \ref{Z:Thm1}; we can ensure that \eqref{Z:10} holds with $\alpha$ bounded away from zero, since we may assume
$$
  \theta(\omega_1,\omega) \le \gamma + O\left(\frac{r}{N_1}\right),
$$
where $\gamma \ll 1$ and $r \ll N_1$, by the hypotheses of Theorem \ref{N:Thm4}, hence $\theta(\omega_1,\omega) \ll \pi/2$.

Now it only remains to prove \eqref{J:220}, but this requires some work. We split the proof into several subsections.

\subsection{Preliminaries}

Recall that we are only claiming \eqref{J:220} under the assumption $\gamma_0 \ll \gamma \ll 1$, which in particular implies
\begin{equation}\label{J:240}
  r \ll \Nmin^{12}\gamma.
\end{equation}
We shall denote by $\phi$ the smallest angle such that
\begin{equation}\label{J:241}
  \Delta B_{N_1} \cap T_r(\omega) \subset \Gamma_\phi(\omega).
\end{equation}
Thus,
\begin{equation}\label{J:242}
  \phi \sim \frac{r}{N_1} \ll \gamma,
\end{equation}
so replacing $\omega$ by $-\omega$ if necessary, we may assume that
\begin{equation}\label{J:244}
  \Gamma_\phi(\omega) \subset \Gamma_\gamma(\omega_1).
\end{equation}
To simplify the ensuing discussion, we change our notation slightly, assuming
\begin{equation}\label{J:246}
  u_1,u_2 \in L^2(\R^{1+3}),
  \qquad
  \supp\widehat{u_1} \subset S_1,
  \qquad
  \supp\widehat{u_2} \subset S_2,
\end{equation}
where
\begin{equation}
  \label{J:248}
  S_1 = \bigl( \R \times T_{r}(\omega) \bigr)
  \cap K^{\pm_1}_{N_1,L_1,\phi,\omega},
  \qquad
  S_2 = K^{\pm_2}_{N_2,L_2,\gamma,\omega_2},
\end{equation}
We then want to prove that
\begin{equation}\label{J:250}
  \norm{\Proj_{\xi_0 \cdot \omega \in I_0} \left(u_1u_2\right)}
  \le C
  \left( \sup_{I_1} \norm{u_1^{I_1}} \right)
  \norm{u_2}
  \qquad \text{($\forall u_1,u_2$ as in $\eqref{J:246}$)}
\end{equation} 
holds with $C^2 \sim r^2 L_1 L_2/\gamma^2$, where the supremum is over all translates $I_1$ of $I_0$.

\subsection{A dyadic estimate}

We shall need the following dyadic estimate:
\begin{equation}\label{J:230}
  \norm{\Proj_{\xi_0 \cdot \omega \in I_0} \left(
  \Proj_{\R \times T_r(\omega)} u_1^{\gamma,\omega_1} \cdot u_2^{\gamma,\omega_2} \right)}
  \lesssim \left( \frac{r \abs{I_0} L_1L_2}{\gamma} \right)^{1/2} 
  \norm{u_1^{\gamma,\omega_1}}
  \norm{u_2^{\gamma,\omega_2}}.
\end{equation}
By Lemma \ref{B:Lemma2}, we reduce this to the volume estimate
\begin{equation}\label{J:59}
  \abs{E} \lesssim \frac{r\abs{I_1}L_1L_2}{\gamma},
\end{equation}
where $E$ satisfies \eqref{K:12} for some $(\tau_0,\xi_0) \in \R^{1+3}$, with
$$
  R = \bigl\{ \xi \in T_{r}(\omega) \colon \xi \cdot \omega \in I_1,
  \; \theta(e_1,\omega) \le \phi, \; \theta(e_2,\omega_2) \le \gamma \bigr\},
$$
where $I_1$ is some translate of $I_0$, $\phi$ is as in \eqref{J:241} and $e_1,e_2$ are as in \eqref{K:24}. Choose coordinates so that $\omega$ and $\omega_2$ both lie in the $(\xi^1,\xi^2)$-plane, and $\omega = (1,0,0)$. Then $\partial_2 f \sim \gamma$ on $R$, since the sectors $\Gamma_\phi(\omega)$ and $\Gamma_\gamma(\omega_2)$ are separated by an angle comparable to $\gamma \ll 1$, in view of \eqref{J:194} and \eqref{J:244}. Therefore, \eqref{K:16} implies
$$
  \Abs{E} \lesssim \frac{L_1L_2}{\gamma} \Abs{ \left\{ (\xi^1,\xi^3) : \xi^1 \in I_1, \; \abs{\xi^3} \lesssim r \right\} }
  \lesssim \frac{L_1L_2}{\gamma} \abs{I_1} \,r.
$$
This proves \eqref{J:59}, hence \eqref{J:230}.

\subsection{Two general observations}

We shall make use of the following:

\begin{lemma}\label{J:Lemma1}
Given $\omega \in \mathbb S^2$, a compact interval $I_0$ and sets $S_1,S_2 \subset \R^{1+3}$, assume
\begin{equation}\label{J:20}
  \norm{\Proj_{\xi_0 \cdot \omega \in I_0}(u_1u_2)}
  \le \left(A\abs{I_0}\right)^{1/2} \norm{u_1}\norm{u_2}
\end{equation}
for all $u_1,u_2$ satisfying \eqref{J:246}, where $A > 0$ is a constant. Assume further that there exist $d > 0$, $c \in \R$ and a compact interval $J$ such that
\begin{gather}
  \label{J:24}
  d \lesssim \abs{J}, \qquad
  S_2 \subset J \times \R^3,
  \\
  \label{J:28}
  S_2 \subset \left\{ (\tau_2,\xi_2) : -\tau_2 + \xi_2 \cdot \omega = c + O(d) \right\}.
\end{gather}
Then \eqref{J:250} holds with $C^2 \sim A \abs{J}$.
\end{lemma}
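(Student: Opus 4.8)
The plan is to use the structural hypotheses on $S_2$ to pin the component $\xi_2\cdot\omega$ of the spatial frequency of $u_2$ to a short interval, feed this back into \eqref{J:20} after a preliminary Fourier restriction of $u_1$, and then absorb the resulting loss against the factor $\abs{I_0}^{1/2}$ in \eqref{J:20}.

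First I would observe that \eqref{J:28} gives $\xi_2\cdot\omega = \tau_2 + c + O(d)$ for $(\tau_2,\xi_2)\in S_2$, while \eqref{J:24} gives $\tau_2\in J$ and $d\lesssim\abs{J}$; hence $\supp\widehat{u_2}$ lies in the slab $\{\xi_2\cdot\omega\in K\}$, where $K = J + c + O(d)$ is an interval of length $\abs{K}\lesssim\abs{J}$. In a bilinear interaction $\xi_0 = \xi_1+\xi_2$ with $\xi_0\cdot\omega\in I_0$ and $\xi_2\cdot\omega\in K$ we then have $\xi_1\cdot\omega\in I_0 - K =: \tilde I$, an interval with $\abs{\tilde I}\le\abs{I_0}+\abs{K}\lesssim\abs{J}$ (we may assume $\abs{I_0}\lesssim\abs{J}$, which is the case in the applications; otherwise the same argument yields $C^2\sim A(\abs{I_0}+\abs{J})$). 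Therefore $\Proj_{\xi_0\cdot\omega\in I_0}(u_1u_2) = \Proj_{\xi_0\cdot\omega\in I_0}\bigl((\Proj_{\xi_1\cdot\omega\in\tilde I}u_1)\,u_2\bigr)$, and since $\Proj_{\xi_1\cdot\omega\in\tilde I}u_1$ still has Fourier support in $S_1$, I can apply \eqref{J:20} to it:
\[
  \norm{\Proj_{\xi_0\cdot\omega\in I_0}(u_1u_2)} \le (A\abs{I_0})^{1/2}\,\bignorm{\Proj_{\xi_1\cdot\omega\in\tilde I}u_1}\,\norm{u_2}.
\]
It then remains to bound $\bignorm{\Proj_{\xi_1\cdot\omega\in\tilde I}u_1}$ by $\sup_{I_1}\norm{u_1^{I_1}}$. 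Fixing an almost disjoint cover $\{I_1^{(n)}\}_{n\in\Z}$ of $\R$ by translates of $I_0$, at most $\lesssim 1 + \abs{\tilde I}/\abs{I_0}\lesssim\abs{J}/\abs{I_0}$ of the (pairwise Fourier-disjoint) slabs $\{\xi_1\cdot\omega\in I_1^{(n)}\}$ meet $\{\xi_1\cdot\omega\in\tilde I\}$, so by orthogonality $\bignorm{\Proj_{\xi_1\cdot\omega\in\tilde I}u_1}^2 \le \sum_{n:\,I_1^{(n)}\cap\tilde I\neq\emptyset}\bignorm{u_1^{I_1^{(n)}}}^2 \lesssim (\abs{J}/\abs{I_0})\bigl(\sup_{I_1}\norm{u_1^{I_1}}\bigr)^2$. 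Combining this with the previous display gives exactly \eqref{J:250} with $C^2\sim A\abs{I_0}\cdot\abs{J}/\abs{I_0} = A\abs{J}$.

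The argument is short and there is no serious obstacle: the real content of the lemma is the reduction in the first step, which isolates the mechanism by which the thin-slab structure of $S_2$ in the $(-\tau_2+\xi_2\cdot\omega)$-direction, together with the short $\tau_2$-support, upgrades a dyadic estimate carrying $\norm{u_1}$ on the right to one carrying the much smaller quantity $\sup_{I_1}\norm{u_1^{I_1}}$. The two things to keep an eye on are purely bookkeeping: that $d\lesssim\abs{J}$ is exactly what is needed to conclude $\abs{K}\lesssim\abs{J}$ and hence $\abs{\tilde I}\lesssim\abs{J}$, and that the preliminary restriction $\Proj_{\xi_1\cdot\omega\in\tilde I}u_1$ does not leave the class \eqref{J:246}, so that \eqref{J:20} genuinely applies to it.
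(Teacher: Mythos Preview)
Your proof is correct and follows essentially the same approach as the paper: both arguments hinge on the observation that \eqref{J:24} and \eqref{J:28} confine $\xi_2\cdot\omega$ to an interval of length $\lesssim\abs{J}$, after which a tiling by translates of $I_0$ produces the factor $(\abs{J}/\abs{I_0})^{1/2}$. The only organizational difference is that the paper tiles both $u_1$ and $u_2$ simultaneously and then sups over $I_1$ while summing $I_2$ by Cauchy--Schwarz (using that there are $\lesssim\abs{J}/\abs{I_0}$ relevant $I_2$'s), whereas you first restrict $u_1$ to the single long interval $\tilde I$, apply \eqref{J:20} once, and then decompose $\Proj_{\xi_1\cdot\omega\in\tilde I}u_1$ orthogonally; the arithmetic and the key idea are identical.
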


\begin{proof} We have
$$
  \text{l.h.s.\eqref{J:20}}
  \lesssim
  \sum_{I_1,I_2}
  \bignorm{u_1^{I_1}}
  \bignorm{u_2^{I_2}}
  \qquad \left( u_j^{I_j} = \Proj_{\xi_j \cdot \omega \in I_j} u_j \right),
$$
where $I_1, I_2$ belong to the almost disjoint cover of $\R$ by translates of $I_0$, and the sum is restricted by the condition $(I_1 + I_2) \cap I_0 \neq \emptyset$, hence each $I_1$ can interact with at most three different $I_2$'s. Thus, sup'ing over $I_1$ and summing $I_2$ using the Cauchy-Schwarz inequality, we get \eqref{J:20}, since the cardinality of the sum over $I_2$ is dominated by $\abs{J}/\abs{I_0}$. To verify the last statement, write
$$
  \xi_2 \cdot \omega = (-\tau_2 + \xi_2 \cdot \omega-c) + \tau_2 + c
$$
and recall \eqref{J:24} and \eqref{J:28}.
\end{proof}

We also need the following.

\begin{lemma}\label{J:Lemma2}
Suppose $\omega \in \mathbb S^2$, $I_0$ is a compact interval, $S_1,S_2 \subset \R^{1+3}$ and $S_1 \subset T_1$, where $T_1 \subset \R^{1+3}$ is an approximate tiling set with the doubling property. If
\begin{equation}\label{J:40}
  \norm{\Proj_{\xi_0 \cdot \omega \in I_0}(u_1\Proj_{T_2}u_2)}
  \le C_0
  \left( \sup_{I_1}\norm{u_1^{I_1}} \right)
  \norm{\Proj_{T_2}u_2}
\end{equation}
for all translates $T_2$ of $T_1$, all translates $I_1$ of $I_0$ and all $u_1,u_2$ as in \eqref{J:246}, then \eqref{J:250} also holds, with a constant $C$ depending on $C_0$ and the size of the overlap of the doubling cover by $T_1$.
\end{lemma}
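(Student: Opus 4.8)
The plan is to prove this by a standard almost-orthogonality (tiling) argument in which only the frequency support of the second factor $u_2$ is decomposed, along the tiling $\{T_2\}$ of translates of $T_1$, while $u_1$ is left untouched --- essentially as in the proof of Lemma \ref{B:Lemma2}, but simpler since only one set needs to be tiled. Concretely, I would fix the lattice $E$ for which $T_1$ is an approximate tiling set, so that the translates $\{T_2 = \xi + T_1\}_{\xi \in E}$ cover $\R^{1+3}$ with $O(1)$ overlap, say with overlap at most $M$. Using this, write $\widehat{u_2} = \sum_{T_2} g_{T_2}$ with $\supp g_{T_2} \subset T_2$ and $\sum_{T_2}\abs{g_{T_2}}^2 \lesssim \abs{\widehat{u_2}}^2$ pointwise (for instance $g_{T_2} = \chi_{T_2}\widehat{u_2}/m$ with $m = \sum_{T_2}\chi_{T_2} \in [1,M]$), and set $u_2^{T_2} = \mathcal F^{-1} g_{T_2}$. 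Then $u_2 = \sum_{T_2} u_2^{T_2}$, each $u_2^{T_2}$ is again of the form \eqref{J:246} (its Fourier support lies in $T_2 \cap S_2 \subset S_2$), and $\sum_{T_2}\norm{u_2^{T_2}}^2 \lesssim \norm{u_2}^2$.

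The next step, which is the only one using the hypotheses on $T_1$, is to note that the pieces $\Proj_{\xi_0\cdot\omega\in I_0}(u_1 u_2^{T_2})$ have Fourier supports with $O(1)$ overlap: the $T_2$-term has Fourier support inside $(S_1 + T_2)\cap\{\xi_0\cdot\omega\in I_0\} \subset (T_1 + T_2)\cap\{\xi_0\cdot\omega\in I_0\}$ (here $S_1 \subset T_1$ is used), and since $T_1 + T_2 = T_1 + (\xi + T_1) = \xi + T_1^*$ these are precisely the members of the doubling cover $\{\xi + T_1^*\}_{\xi\in E}$, which has $O(1)$ overlap by the doubling property of $T_1$. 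Hence, by Plancherel and Cauchy--Schwarz applied pointwise in the output frequency (at most $M$ summands are nonzero at any given point),
$$
  \Bignorm{\Proj_{\xi_0\cdot\omega\in I_0}(u_1 u_2)}^2
  = \Bignorm{\sum_{T_2}\Proj_{\xi_0\cdot\omega\in I_0}(u_1 u_2^{T_2})}^2
  \lesssim \sum_{T_2}\Bignorm{\Proj_{\xi_0\cdot\omega\in I_0}(u_1 u_2^{T_2})}^2.
$$
Applying the hypothesis \eqref{J:40} to each summand --- taking the function denoted $\Proj_{T_2}u_2$ there to be our $u_2^{T_2}$, which is legitimate since $u_2^{T_2}$ satisfies \eqref{J:246} and $\widehat{u_2^{T_2}}$ is already supported in $T_2$ --- gives $\Bignorm{\Proj_{\xi_0\cdot\omega\in I_0}(u_1 u_2^{T_2})} \le C_0\bigl(\sup_{I_1}\norm{u_1^{I_1}}\bigr)\norm{u_2^{T_2}}$. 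Summing in $T_2$ and using $\sum_{T_2}\norm{u_2^{T_2}}^2 \lesssim \norm{u_2}^2$ from the first step then yields \eqref{J:250} with $C \sim C_0$, the implicit constant depending only on $M$.

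I do not expect any genuine analytic obstacle here: the only point requiring care is the frequency-support bookkeeping above --- that the pieces $S_1 + T_2$ inherit the $O(1)$ overlap of the doubling cover of $T_1^*$ --- which is exactly where the hypotheses ``$S_1 \subset T_1$'' and ``$T_1$ has the doubling property'' enter. Everything else is the routine decomposition-and-orthogonality machinery already employed in the proof of Lemma \ref{B:Lemma2}.
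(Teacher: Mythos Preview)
Your proposal is correct and follows essentially the same route as the paper: decompose $u_2$ along the tiling $\{T_2\}$, use $S_1\subset T_1$ together with the doubling property to get $O(1)$ overlap of the output Fourier supports $T_1+T_2$, then square, sum, and apply \eqref{J:40} termwise. The only cosmetic difference is that the paper works directly with $\Proj_{T_2}u_2$ and says ``squaring both sides of \eqref{J:40} and summing over $T_2\in\mathcal T_1$ yields \eqref{J:250}'', while you make the partition $u_2=\sum_{T_2}u_2^{T_2}$ explicit via the multiplicity function; your version is slightly more carefully written but the argument is the same.
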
 

\begin{proof} By the definition of an approximate tiling set with the doubling property (section \ref{C}), there exists a lattice $E \subset \R^{1+3}$ such that $\mathcal T_1 = \{ X + T_1 \}_{X \in E}$ is a cover of $\R^{1+3}$ with $O(1)$ overlap, and moreover the corresponding doubling cover $T_1 + \mathcal T_1$ also has $O(1)$ overlap. The Fourier support of $u_1\Proj_{T_2}u_2$ is contained in $T_1 + T_2$, so squaring both sides of \eqref{J:40} and summing over $T_2 \in \mathcal T_1$ yields \eqref{J:250}.
\end{proof}

\subsection{Proof of \eqref{J:220}}

Assume \eqref{J:194} and \eqref{J:240}--\eqref{J:250}. By Remark \ref{C:Rem}, we may assume $L_2 \ll N_2$. We shall apply Lemmas \ref{J:Lemma1} and \ref{J:Lemma2}. By \eqref{J:200} and \eqref{J:230},
\begin{equation}\label{J:58}
  \text{\eqref{J:20} holds with $A \sim \min\left(\dfrac{rL_1L_2}{\gamma},r^2 \Lmin^{12} \right)$.}
\end{equation}
By Lemma \ref{D:Lemma4},
\begin{equation}\label{J:56}
  S_1 \subset T_1 \equiv H_d(\omega) \cap \bigl( \R \times T_{r}(\omega) \bigr)
  \qquad \text{where}
  \qquad
  d = \max(L_1,N_1\phi^2).
\end{equation}
Note that $T_1$ is an approximate tiling set with the doubling property, hence Lemma \ref{J:Lemma2} allows us to replace $S_2$ by $S_2 \cap T_2$ in Lemma \ref{J:Lemma1}, where $T_2$ is an arbitrary translate of $T_1$. Let us fix such a translate:
\begin{equation}\label{J:70}
  T_2 = (\tau_0,\xi_0) + T_1.
\end{equation}
Clearly, \eqref{J:28} holds with $S_2$ replaced by $S_2 \cap T_2$, and with $c = -\tau_0+\xi_0 \cdot \omega$, it only remains to prove the existence of an interval $J$ such that
\begin{equation}\label{J:74:1}
  S_2 \cap T_2 \subset J \times \R^3,
  \qquad
  \abs{J} \sim \max\left(\frac{r}{\gamma}, \frac{\Lmax^{12}}{\gamma^2} \right) \gtrsim d,
\end{equation}
where the very last inequality holds by the definitions of $d$ and $\alpha$ above. Note that $J$ (like $c$) may depend on $(\tau_0,\xi_0)$, which is fixed for the rest of the proof.

Let $(\tau,\xi) \in S_2 \cap T_2$. Then $\xi$ is separated from $\R \omega$ by a distance $\sim N_2\gamma$, and the same is true of $\xi_0$, since $\xi \in \xi_0 + T_{r}(\omega)$ and $r \ll N_2\gamma$. Thus,
\begin{equation}\label{J:80}
  \abs{P_{\omega^\perp} \xi_0} \sim N_2\gamma.
\end{equation}
We also have
\begin{equation}\label{J:82}
  -\tau \pm_2 \abs{\xi} = O(L_2),
  \qquad
  -\tau + \xi \cdot \omega = c + O(d),
\end{equation}
where $c = -\tau_0+\xi_0 \cdot \omega$. Since $L_2 \ll N_2 \sim \abs{\xi_2}$, \eqref{J:82} implies $\pm_2\tau = \abs{\tau} \sim N_2$. But without loss of generality we can take $\pm_2=+$, hence $\tau \sim N_2$.

Choose coordinates $(\xi^1,\xi^2,\xi^3)$ so that $\omega = (1,0,0)$ and $\xi_0 = (\xi_0^1,\xi_0^2,0)$ with $\xi_0^2 \sim N_2\gamma$, by \eqref{J:80}. Each slice $\tau=\text{const}$ of $S_2 \cap T_2$ (where $\tau \sin N_2$) is contained in the intersection of the following three sets (a truncated tube, a thickened sphere and a thickened plane): 
\begin{align*}
  E_1 &= \left\{ \xi \in \R^3 \colon
  \abs{\xi^1} \sim N_2, \; \xi^2 \in [c_1,c_2],
  \; \xi^3 = O(r) \right\},
  \\
  E_2(\tau) &= \left\{ \xi \in \R^3 \colon \abs{\xi} = \tau + O(L_2) \right\},
  \\
  E_3(\tau) &= \left\{ \xi \in \R^3 \colon \xi^1 = \tau + c + O(d) \right\},
\end{align*}
where $0 < c_1 < c_2$ in the definition of $E_1$ satisfy
\begin{equation}\label{J:88}
  c_1,c_2 \sim N_2\gamma, \qquad c_2-c_1 \sim r \ll N_2\gamma.
\end{equation}

Now take a dynamical point of view, thinking of $\tau$ as a time variable. As we increase $\tau$, the sphere expands with unit speed, and the thickened plane moves with unit speed in the $\xi^1$-direction, whereas the tube $E_1$ remains fixed. Since the tube is offset from the $\xi^1$-axis, $E_1(\tau) \cap E_2(\tau)$ is contained in a thickened plane
\begin{equation}\label{J:90}
   E_4(\tau) = \left\{ \xi \colon \xi^1 \in [f(\tau),g(\tau)] \right\},
\end{equation}
moving at a slightly different speed than $E_3(\tau)$, as we show below, causing the two sets to move through each other. Thus, our strategy for proving \eqref{J:74:1} is simply to estimate the length of time for which the two can stay in contact.

We solve $\abs{\xi}^2 = (\xi^1)^2 + \abs{(\xi^2,\xi^3)}^2$ for $\xi^1$, so we want $\xi^1$ to have a definite sign. From the start we can split $S_2$ into two parts, depending on the sign of $\xi \cdot \omega = \xi^1$. Since we chose $\pm_2=+$, the most difficult case is $\xi^1 > 0$, so we assume this (the case $\xi^1 < 0$ is easier: then $E_3,E_4$ will move in opposite directions at approximately unit speed). Thus, $\xi^1 = \sqrt{\abs{\xi}^2 -  \abs{(\xi^2,\xi^3)}^2}$, hence
\begin{equation}\label{J:100}
   E_1(\tau) \cap E_2(\tau) 
   \subset \left\{ \xi \colon
   \abs{\xi} \sim N_2, \;
   \xi^1 \in [x_1(\abs{\xi}),x_2(\abs{\xi})] \right\},
\end{equation}
where $x_1(s) = \sqrt{s^2-\hat c_2^2}$, $x_2(s) = \sqrt{s^2-\hat c_1^2}$, 
for some constants $0 < \hat c_1 < \hat c_2$ satisfying the analogue of \eqref{J:88}. Thus, $x_1(\abs{\xi}) \sim x_2(\abs{\xi}) \sim N_2$ and
\begin{equation}\label{J:110}
  x_1(\abs{\xi}) - x_2(\abs{\xi})
  = \frac{\hat c_2^2-\hat c_1^2}{x_1(\abs{\xi}) + x_2(\abs{\xi})}
  \lesssim \frac{\hat c_2^2-\hat c_1^2}{N_2}
  \sim \frac{(N_2\gamma) r}{N_2} = r \gamma.
\end{equation}
We may interpret $\abs{\xi} = \tau + O(L_2)$ as $\bigabs{-\tau+\abs{\xi}} \le L$ for definiteness, hence
$$
  N_2 \lesssim \tau - L_2
  \le \abs{\xi} \le \tau + L_2
  \lesssim N_2,
$$
where we used $L_2 \ll N_2 \sim \tau$. Thus, since $x_1(s),x_2(s)$ are strictly increasing,
\begin{equation}\label{J:114}
  N_2 \sim x_1(\tau-L_2) \le x_1(\abs{\xi})
  < x_2(\abs{\xi}) \le x_2(\tau+L_2) \sim N_2.
\end{equation}
Since $x_1'(s) \sim x_1'(s) \sim 1$ for $s \sim N_2$, we then get, using also \eqref{J:110},
\begin{equation}\label{J:116}
\begin{aligned}
  &x_2(\tau+L_2) - x_1(\tau-L_2)
  \\
  &\qquad= x_2(\tau+L_2)  - x_2(\abs{\xi})
  + O( r\gamma) + x_1(\abs{\xi}) - x_1(\tau-L_2)
  \\
  &\qquad= O(L_2) + O(r\gamma) + O(L_2).
\end{aligned}
\end{equation}
Moreover, since $x_1'(s) - 1 = \hat c_2^2/[x_1(s)(s+x_1(s))]$, we obtain
\begin{equation}\label{J:118}
  \frac{d}{d\tau}x_1(\tau-L_2)-1
  \sim \frac{(N_2\gamma)^2}{(N_2)^2} 
  \sim \gamma^2.
\end{equation}
Setting
$$
  f(\tau) = x_1(-\tau-L_2),
  \qquad
  g(\tau) = x_2(-\tau+L_2),
$$
we see from \eqref{J:114} and \eqref{J:100} that $E_4(\tau)$ defined by \eqref{J:90} contains $E_1(\tau) \cap E_2(\tau)$. By \eqref{J:116}, the thickness of $E_4(\tau)$ is $O(r\gamma+L_2)$, and by \eqref{J:118}, $E_4(\tau)$ moves with speed $\gamma^2$ relative to $E_3(\tau)$, which has thickness $O(d)$. Therefore, the length of the $\tau$-interval in which the two slabs can intersect is comparable to
$$
  \frac{d+r\gamma+L_2}{\gamma^2}
  \sim \frac{r\gamma+\Lmax^{12}}{\gamma^2},
$$
where we used the fact that $d \lesssim L_1 + r\gamma$, by the definitions of $d$ and $\alpha$.

\section{Proof of Theorem \ref{Z:Thm2}}\label{G:50}

We use coordinates $(x,y,z)$ on $\R^3$. Since $S$ is a surface obtained by revolving a graph $y=f(x)$ about the $x$-axis, surface measure is given by
\begin{equation}\label{G:60}
  d\sigma = f(x) \sqrt{1 + f'(x)^2} \, dx \, d\theta,
\end{equation}
where $\theta$ is the angle of rotation about the $x$-axis. By continuity, we can ignore the case where the normal of the thickened plane is exactly parallel to the $x$-axis. Using also the rotational symmetry of $S$ and $B$ about the $x$-axis, we may therefore assume that $P_\delta$ is the region between the planes
\begin{equation}\label{G:90}
  y = px + q, \quad y = px + q + \frac{\delta}{\cos\beta},
  \quad
  \text{where $p = \tan\beta$ and $-\frac{\pi}{2} < \beta < \frac{\pi}{2}$}.
\end{equation}

In the next two subsections we treat separately the ellipsoid and the hyperboloid, assuming throughout that $a \ge b > 0$ and $b^2/a \lesssim R \lesssim a$.

\subsection{$S$ is an ellipsoid} Then
\begin{equation}\label{G:100}
  f(x) = \frac{b}{a} \sqrt{a^2-x^2},
  \qquad
  d\sigma = \frac{b}{a} \sqrt{a^2 - x^2 + \frac{b^2}{a^2} x^2} \, dx \, d\theta
\end{equation}
and the foci of $S$ are located at $(\pm c,0,0)$, where
\begin{equation}\label{G:98}
  c = \sqrt{a^2 - b^2},
  \qquad
  \text{hence}
  \qquad
  a-c \le \frac{b^2}{a} \lesssim R.
\end{equation}
To make a definite choice, let $B$ be centered at the right focus $(c,0,0)$. We may restrict to the case where the line $y=px+q$ intersects the ellipse $y^2=f(x)^2$ at two points $x_1 < x_2$, which are the zeros of
\begin{equation}\label{G:96}
  Q(x) \equiv \frac{b^2}{a^2}(a^2-x^2)-(px+q)^2
  =
  \frac{b^2(A^2-q^2)}{A^2}-\frac{A^2}{a^2}\left(x+\frac{a^2pq}{A^2}\right)^2,
\end{equation}
where
$$
  A = \sqrt{a^2p^2+b^2}
$$
Assuming $A > q$,
\begin{equation}\label{G:94}
  x_1 = -\frac{a^2pq}{A^2} - d,
  \qquad
  x_2 = -\frac{a^2pq}{A^2} + d,
  \qquad \text{where}
  \qquad
  d = \frac{ab\sqrt{A^2-q^2}}{A^2}.
\end{equation}
By symmetry considerations, we may restrict attention to $x_1 \le x \le x_2$. Intersecting with $B$ further imposes $x \ge c-R$, so we assume $c-R \le x_2$, of course. Thus,
\begin{equation}\label{G:102}
  x \in [x_*,x_2],
  \qquad \text{where} \qquad 
  x_* = \max(x_1,c-R),
\end{equation}
Then $a-x \le a - c + R \lesssim R$, where we used \eqref{G:98}, hence
\begin{equation}\label{G:106}
  \frac{b}{a} \sqrt{a^2 - x^2 + \frac{b^2}{a^2} x^2} \lesssim \frac{b}{\sqrt{a}}\sqrt{R} + \frac{b^2}{a} \lesssim \frac{b}{\sqrt{a}}\sqrt{R},
\end{equation}
giving us control on $d\sigma$.

Now consider a slice $x=\text{const}$ of $S \cap P_\delta$, for some $x \in [x_1,x_2]$. In this slice we see a circle of radius $f(x)$ intersected with the region between the lines
\begin{equation}\label{G:110}
  y = g(x) \equiv px+q,
 \qquad
  y = h(x) \equiv \min\left(px+q+\frac{\delta}{\cos\beta}, f(x) \right),
\end{equation}
hence integration of $\theta$ gives us the following arc length on the unit circle:
\begin{equation}\label{G:130}
  \Delta \theta(x) = 2 \left( \arcsin\frac{h(x)}{f(x)}-\arcsin\frac{g(x)}{f(x)} \right)
  \lesssim \frac{\delta}{\cos\beta}
  \cdot \frac{1}{\sqrt{f(x)^2-g(x)^2}},
\end{equation}
where we used the estimate (proved below)
\begin{equation}\label{G:120}
 \arcsin t - \arcsin s \lesssim \frac{t-s}{\sqrt{1-s^2}}
 \qquad \text{for all $-1 \le s \le t \le 1$}.
\end{equation}
Combining \eqref{G:130} with \eqref{G:102} and \eqref{G:106}, we get
\begin{equation}\label{G:140}
  \sigma\left(S \cap B \cap P_\delta \cap ( [x_*,x_2] \times \R^2) \right)
  \lesssim \frac{b}{\sqrt{a}}\sqrt{R}\frac{\delta}{\cos\beta}
  \int_{x_*}^{x_2} \frac{dx}{\sqrt{Q(x)}}.
\end{equation}
In view of \eqref{G:96} and \eqref{G:94},
\begin{equation}\label{G:150}
  \int_{x_*}^{x_2} \frac{dx}{\sqrt{Q(x)}}
  =
  \frac{a}{A}
  \left(\arcsin 1 - \arcsin \frac{x_*+\frac{a^2pq}{A^2}}{d}\right)
  \lesssim
  \frac{a}{A}\sqrt{\frac{x_2-x_*}{x_2-x_1}}.
\end{equation}
where we used the estimate
$$
  \arcsin 1 - \arcsin s \lesssim \sqrt{1-s} \qquad \text{for $-1 \le s \le 1$}.
$$
This is trivial if $s < 0$; if $0 \le s \le 1$, it follows from \eqref{G:120}.

Since the right side of \eqref{G:150} is no larger than $\pi a/A$,
\begin{equation}\label{G:152}
  \text{l.h.s.\eqref{G:140}}
  \lesssim
  \frac{b\sqrt{a}}{A}\sqrt{R}\frac{\delta}{\cos\beta},
\end{equation}
and this proves the theorem whenever
\begin{equation}\label{G:154}
  \frac{b^2a}{A^2} \lesssim R \cos^2\beta.
\end{equation}
But
\begin{equation}\label{G:156}
  \frac{b^2a}{A^2}
  =
  \frac{b^2a}{a^2p^2+b^2}
  \le
  \min\left(\frac{b^2}{ap^2},a\right)
  \lesssim
  \min\left(\frac{R\cos^2\beta}{\sin^2\beta},a\right),
\end{equation}
implying \eqref{G:154} when $\beta \sim 1$. From now on we can therefore assume $\beta \ll 1$, hence
$$
  \cos\beta \sim 1,
$$
so we can strike this factor from \eqref{G:140} and \eqref{G:154}. Then \eqref{G:156} obviously implies \eqref{G:154} when $R \sim a$, so it remains to consider
\begin{equation}\label{G:160}
  R \ll a, \qquad \text{hence} \qquad c-R \sim a,
\end{equation}
where we used \eqref{G:98} to get the last statement.

Now we split into the cases
\begin{enumerate}
\item\label{G:174} $x_1 \ge c-2R$,
\item\label{G:172} $0 \le x_1 \le c-2R$,
\item\label{G:170} $x_1 < 0$.
\end{enumerate}
Observe that
\begin{equation}\label{G:176}
  \abs{p}
  \ge
  \frac{\Abs{\frac{b}{a}\sqrt{a^2-x_1^2} - \frac{b}{a}\sqrt{a^2-x_2^2}}}{x_2-x_1}
  = \frac{b}{a}
  \frac{\abs{x_1+x_2}}{\sqrt{a^2-x_1^2} + \sqrt{a^2-x_2^2}}.
\end{equation}
If $x_1 \ge 0$, this implies
\begin{equation}\label{G:178}
  \abs{p}
  \ge
  \frac{b}{a}
  \frac{x_1+x_2}{\sqrt{a^2-x_1^2} + \sqrt{a^2-x_2^2}}
  \sim \frac{b}{\sqrt{a(a-x_1)}} \qquad (x_1 \ge 0),
\end{equation}
where we used the fact that $x_2 \sim a$, on account of \eqref{G:160}.

\subsubsection{Case \eqref{G:174}} Then $a - x_1 \le a - c + 2R \lesssim R$, by \eqref{G:98}, hence \eqref{G:178} gives $\abs{p} \gtrsim b/\sqrt{aR}$, implying \eqref{G:154}.

\subsubsection{Case \eqref{G:172}} Then $x_2 - x_1 \ge c-R-x_1 \ge R \gtrsim a-c$, hence
$$
  a - x_1 = a - x_2 + x_2 - x_1 \le a - c + R + x_2 - x_1 \lesssim x_2 - x_1.
$$
Plugging this and $x_2-x_* \le a-c+R \lesssim R$ into \eqref{G:150}, and using \eqref{G:178}, we get
$$
  \text{l.h.s.\eqref{G:140}}
  \lesssim
  \frac{b}{\sqrt{a}}\sqrt{R}\delta \frac{a}{\sqrt{a^2p^2+b^2}} \sqrt{\frac{R}{a-x_1}}
  \lesssim
  \sqrt{R}\delta \frac{a}{a\abs{p}} \sqrt{R}\abs{p}
  \le R\delta.
$$

\subsubsection{Case \eqref{G:170}} Then $x_2-x_1 \ge x_2 \ge c-R \sim a$, so by \eqref{G:150},
$$
  \text{l.h.s.\eqref{G:140}}
  \lesssim
  \frac{b}{\sqrt{a}}\sqrt{R}\delta \frac{a}{\sqrt{a^2p^2+b^2}} \sqrt{\frac{R}{a}}
  \le R\delta.
$$

This concludes the proof for the ellipsoid, except for \eqref{G:120}.

If $0 \le s \le t \le 1$, we write
$$
  \int_s^t \frac{du}{\sqrt{1-u^2}}
  \le \int_s^t \frac{du}{\sqrt{1-u}}
  = 2\left(\sqrt{1-s}-\sqrt{1-t}\,\right) = \frac{2(t-s)}{\sqrt{1-s}+\sqrt{1-t}}.
$$
This also covers $-1 \le s \le t \le 0$, since $\arcsin$ is odd. If $-1 \le s \le 0 \le t \le 1$, we write the left side of \eqref{G:120} as $\arcsin t + \arcsin(-s)$ and use the fact that $\arcsin t \le 2t$ for $0 \le t \le 1$, as follows by the above calculation. This proves \eqref{G:120}.

\subsection{$S$ is a hyperboloid} Then
\begin{equation}\label{G:208}
  f(x) = \frac{b}{a} \sqrt{x^2-a^2},
  \qquad
  d\sigma = \frac{b}{a} \sqrt{x^2 - a^2 + \frac{b^2}{a^2} x^2} \, dx \, d\theta.
\end{equation}
The foci of $S$ are again located at $(\pm c,0,0)$, but now
\begin{equation}\label{G:210}
  c = \sqrt{a^2+b^2},
  \qquad \text{hence} \qquad
  c-a \le \frac{b^2}{a} \lesssim R
\end{equation}
We center $B$ at $(-c,0,0)$, hence we restrict to
\begin{equation}\label{G:212}
  -c-R \le x \le -a.
\end{equation}
Then $\abs{x} = -x \le c+R \lesssim a$, $\abs{x+a} = - (x + a) = -x-c+c-a \lesssim R$ and $\abs{x-a} = a-x \lesssim a$, so to estimate the surface measure we can use
\begin{equation}\label{G:214}
  \frac{b}{a} \sqrt{ x^2 - a^2 + \frac{b^2}{a^2}x^2}
  \lesssim \frac{b}{\sqrt{a}}\sqrt{R} + \frac{b^2}{a}
  \lesssim \frac{b}{\sqrt{a}}\sqrt{R}.
\end{equation}
There are two cases to consider:
\begin{enumerate}
  \item\label{G:220} $\abs{p} \le b/a$,
  \item\label{G:222} $\abs{p} > b/a$,
\end{enumerate}
where $b/a$ is, of course, the asymptotic slope of $y=f(x)$.

\subsubsection{Case \eqref{G:220}} Then
\begin{equation}\label{G:230}
  Q(x) \equiv \frac{b^2}{a^2}(x^2-a^2) - (px+q)^2
  =
  \frac{A^2}{a^2}\left(x - \frac{a^2pq}{A^2}\right)^2
  - \frac{b^2(q^2+A^2)}{A^2},
\end{equation}
where
$$
  A = \sqrt{b^2-a^2p^2},
$$
and $Q(x)$ has exactly one zero in the interval $(-\infty,-a]$, namely $x_2$ given by
\begin{equation}\label{G:238}
  -x_2 = -\frac{a^2pq}{A^2} + \frac{ab\sqrt{q^2+A^2}}{A^2}
  \ge \frac{ab}{A^2} \left( \sqrt{q^2+A^2} - \abs{q} \right)
  \sim \frac{ab}{\sqrt{q^2+A^2}},
\end{equation}
where we used $\abs{p} \le b/a$. We may restrict to
\begin{equation}\label{G:240}
  x \in [x_*,x_2], \qquad \text{where} \qquad x_* = - c - R,
\end{equation}
and we assume $x_* \le x_2$, of course, hence \eqref{G:238} implies
\begin{equation}\label{G:242}
  \frac{ab}{\sqrt{q^2+A^2}} \lesssim c+R \lesssim a+R \lesssim a
  \qquad \text{hence}
  \qquad
  \sqrt{q^2+A^2} \gtrsim b.
\end{equation}
Since \eqref{G:110} and \eqref{G:130} still apply, then using \eqref{G:208} and \eqref{G:214} we get \eqref{G:140}, and we have the estimate (proved below)
\begin{equation}\label{G:262}
  \phi(x) \equiv \int_{x}^{x_2} \frac{dt}{\sqrt{Q(t)}}
  \lesssim \sqrt{\frac{a(x_2-x)}{b\sqrt{q^2+A^2}}} 
  \qquad (x \le x_2).
\end{equation}
Plugging this into \eqref{G:140}, and using $x_2-x_* \le -a+c+R \lesssim R$ and \eqref{G:242},
$$
  \text{l.h.s.\eqref{G:140}}
  \lesssim \frac{b}{\sqrt{a}}\sqrt{R}\delta
  \sqrt{\frac{a(x_2-x_*)}{b\sqrt{q^2+A^2}}}
  \lesssim \frac{b}{\sqrt{a}}\sqrt{R}\delta
  \sqrt{\frac{aR}{b^2}} = R\delta,
$$
where we used also the fact that $\cos\beta \sim 1$, since $\abs{p} \le b/a \le 1$. 

This concludes the proof for case \eqref{G:220}, up to the claim \eqref{G:262}, but in view of \eqref{G:230} and \eqref{G:238}, $\phi(x) = \frac{a}{A} \psi(d+x_2-x)$, where $d = \frac{ab\sqrt{q^2+A^2}}{A^2}$ and
$$
  \psi(u) = \int_d^u \frac{ds}{\sqrt{s^2-d^2}}
  \le \frac{1}{\sqrt{d}} \int_d^u \frac{ds}{\sqrt{s-d}}
  =  \frac{2\sqrt{u-d}}{\sqrt{d}} \qquad (u \ge d),
$$
implying \eqref{G:262}.

\subsubsection{Case \eqref{G:222}} Then we have (this should be compared with \eqref{G:96})
\begin{equation}\label{G:280}
  Q(x) \equiv \frac{b^2}{a^2}(x^2-a^2) - (px+q)^2
  =
  \frac{b^2(q^2-A^2)}{A^2}
  - \frac{A^2}{a^2}\left(x + \frac{a^2pq}{A^2}\right)^2,
\end{equation}
where
$$
  A = \sqrt{a^2p^2-b^2}.
$$
We may restrict attention to the case where $Q(x)$ has two zeros $x_1 < x_2$. This happens when $A^2 < q^2$, and then $x_1,x_2$ are given by \eqref{G:94}, but now with
\begin{equation}\label{G:290}
  d = \frac{ab\sqrt{q^2-A^2}}{A^2}.
\end{equation}
The zeros are either both in the interval $(-\infty,-a]$ or both in $[a,\infty)$, and we assume of course the former, which happens when $p,q$ have the same sign. Since we are intersecting with the ball $B$, we further assume $x_2 \ge -c-R$, hence
\begin{equation}\label{G:300}
  x \in [x_*,x_2], \qquad \text{where} \qquad x_* = \max(x_1,-c-R).
\end{equation}
Now \eqref{G:140}--\eqref{G:152} are valid, so the theorem follows whenever \eqref{G:154} holds. It certainly holds if $p \ge 2$, say, since then $A^2 \sim a^2p^2$. This takes care of the case $\cos\beta \ll 1$, so from now on we may assume
$$
  \cos\beta \sim 1,
$$
hence we can strike this factor from \eqref{G:140}, \eqref{G:152} and \eqref{G:154}.

Observe that $\abs{p}$ must be at least as large as the absolute value of the slope of $y=f(x)$ at $x=x_1$, hence
$$
  a\abs{p} \pm b \ge b\left(\frac{\abs{x_1}}{\sqrt{x_1^2-a^2}} \pm 1 \right),
$$
implying
\begin{equation}\label{G:304}
  A^2 = (a\abs{p}+b)(a\abs{p}-b)
  \ge
  b^2
  \left(
  \frac{x_1^2}{x_1^2-a^2}
  -1 \right)
  =
  \frac{a^2b^2}{x_1^2-a^2}.
\end{equation}

We now split into two subcases:
{
\renewcommand{\theenumi}{\alph{enumi}}
\begin{enumerate}
  \item\label{G:310} $x_1 \ge -c-2R$,
  \item\label{G:312} $x_1 < -c-2R$.
\end{enumerate}
}

In subcase \eqref{G:310}, $\abs{x_1} \sim a$ and $\abs{x_1}-a \lesssim R$, so \eqref{G:304} implies $A^2 \gtrsim ab^2/R$, hence \eqref{G:154} holds.

Now assume subcase \eqref{G:312}. Then we claim that
\begin{equation}\label{G:350}
  \abs{x_1}-a \sim x_2 - x_1.
\end{equation}
Indeed, $x_2-x_1 \le -a-x_1= \abs{x_1}-a$, and
$$
  -x_2-a \le c+R - a = c-a+R \lesssim R = - c - R - (-c-2R) \le x_2 - x_1,
$$
hence $-x_1-a = x_2-x_1 -x_2-a \lesssim x_2-x_1$, proving \eqref{G:350}. By \eqref{G:140} and \eqref{G:150}, and using $x_2-x_* \le -a+c+R \lesssim R$ and \eqref{G:350}, we get
$$
  \text{l.h.s.\eqref{G:140}}
  \lesssim \frac{b}{\sqrt{a}}\sqrt{R}\delta
  \frac{a}{A}
  \sqrt{\frac{x_2-x_*}{x_2-x_1}}
  \lesssim
  R\delta \frac{\sqrt{a}b}{A\sqrt{\abs{x_1}-a}}
$$
hence it is enough to show
\begin{equation}\label{G:360}
  A^2 \gtrsim \frac{ab^2}{\abs{x_1}-a}.
\end{equation}
This follows from \eqref{G:304} if $\abs{x_1} \sim a$, so it remains to consider
\begin{equation}\label{G:362}
  \abs{x_1} \gg a, \qquad \text{hence} \qquad x_1^2-a^2 \sim x_1^2.
\end{equation}
Then instead of \eqref{G:304} we use the analogue of \eqref{G:176}, which implies
\begin{align*}
  A^2 \ge b(a\abs{p}-b)
  &\ge
  b^2 \left( \frac{-x_1-x_2}{\sqrt{x_1^2-a^2}+\sqrt{x_2^2-a^2}}
  - 1 \right)
  \\
  &=
  b^2 \frac{\abs{x_1}-\sqrt{x_1^2-a^2}+\abs{x_2}-\sqrt{x_2^2-a^2}}{\sqrt{x_1^2-a^2}+\sqrt{x_2^2-a^2}}
  \\
  &=
  b^2 \frac{\frac{a^2}{\abs{x_1}+\sqrt{x_1^2-a^2}}
  + \frac{a^2}{\abs{x_2}+\sqrt{x_2^2-a^2}}}{\sqrt{x_1^2-a^2}+\sqrt{x_2^2-a^2}}
  \sim \frac{ab^2}{\abs{x_1}},
\end{align*}
where at the end we used \eqref{G:362} and the fact that $\abs{x_2} \sim a$. Thus, \eqref{G:360} again holds.

This concludes the proof of Theorem \ref{Z:Thm2}.

\section{Proofs of lemmas}\label{G}

\subsection{Proof of Lemma \ref{C:Lemma1}}

Choose coordinates so that $\xi_0 = (\abs{\xi_0},0,0) \neq 0$. Write $\xi = (\xi^1,\xi')$, where $\xi'=(\xi^2,\xi^3)$. Then $\xi \in S_\delta(r) \cap (\xi_0 + S_\Delta(R) )$ if and only if
$$
  (r-\delta)^2 < (\xi^1)^2 + \abs{\xi'}^2 < (r+\delta)^2
$$
and
$$
  (R-\Delta)^2 < (\xi^1-\xi_0^1)^2 + \abs{\xi'}^2 < (R+\Delta)^2.
$$
Subtracting these inequalities, we find that
\begin{multline*}
  \xi \in S_\delta(r) \cap ( \xi_0 + S_\Delta(R) )
  \implies \xi^1 \in (a,b),
  \\
  \text{where} \quad
  \left\{
  \begin{aligned}
  a &= \frac{1}{2\abs{\xi_0}} \left(\abs{\xi_0}^2+r^2-R^2+\delta^2-\Delta^2-2(r\delta+R\Delta)\right),
  \\
  b &= \frac{1}{2\abs{\xi_0}} \left(\abs{\xi_0}^2+r^2-R^2+\delta^2-\Delta^2+2(r\delta+R\Delta)\right).
  \end{aligned}
\right.
\end{multline*}
But $b-a = 2(r\delta+R\Delta)/\abs{\xi_0}$, so to complete the proof we can apply the following lemma. To be precise, by symmetry we may assume without loss of generality that $r\delta \le R\Delta$, and we then apply the following lemma with $(\rho,\varepsilon) = (r,\delta)$, thus completing the proof of Lemma \ref{C:Lemma1}.

\begin{lemma}\label{M:Lemma3} Let $a,b \in \R$ with $a < b$, and let $0 < \varepsilon \ll \rho$. Then
$$
  \Abs{S_\varepsilon(\rho) \cap \left\{ \xi \colon a < \xi^1 < b \right\}}
  \lesssim \rho \varepsilon (b-a).
$$
\end{lemma}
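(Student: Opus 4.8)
The plan is to estimate the volume of $S_\varepsilon(\rho)\cap\{\xi\colon a<\xi^1<b\}$ by slicing in the $\xi^1$ variable and computing the two-dimensional measure of each slice. For a fixed $\xi^1=s$, the slice of the thickened sphere $S_\varepsilon(\rho)$ is the planar annulus
$$
  \left\{ \xi'=(\xi^2,\xi^3)\colon (\rho-\varepsilon)^2 - s^2 \le \abs{\xi'}^2 \le (\rho+\varepsilon)^2 - s^2 \right\},
$$
which is nonempty only when $\abs{s}\le\rho+\varepsilon$, and in that range its area is at most $\pi\bigl[(\rho+\varepsilon)^2-(\rho-\varepsilon)^2\bigr] = 4\pi\rho\varepsilon$. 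So each slice has area $O(\rho\varepsilon)$, uniformly in $s$.

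Next I would integrate this bound over $s\in(a,b)$. Naively this gives $O(\rho\varepsilon(b-a))$ immediately, but one must be slightly careful when $b-a$ is large compared to $\rho$: the slices are empty outside $\abs{s}\le\rho+\varepsilon$, so the effective range of integration has length at most $\min(b-a,\,2(\rho+\varepsilon))\lesssim\min(b-a,\rho)\le b-a$. Either way, Fubini's theorem gives
$$
  \Abs{S_\varepsilon(\rho)\cap\{a<\xi^1<b\}}
  = \int_a^b \Abs{\{\xi'\colon (s,\xi')\in S_\varepsilon(\rho)\}}\,ds
  \lesssim \rho\varepsilon\,(b-a),
$$
which is exactly the claimed bound.

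This argument is entirely routine; there is no real obstacle. The only point deserving a word of care is the area estimate for the planar annulus, where one should not bound the outer radius by $\rho+\varepsilon$ and then square (giving $\rho^2$ rather than $\rho\varepsilon$) but rather use the difference-of-squares identity $(\rho+\varepsilon)^2-(\rho-\varepsilon)^2=4\rho\varepsilon$ directly, together with the fact that subtracting $s^2$ from both radii-squared only shrinks the annulus. With $\varepsilon\ll\rho$ the quantities $(\rho\pm\varepsilon)^2-s^2$ are automatically handled (negative values just mean the slice is empty, which only helps), so no further case analysis is needed.
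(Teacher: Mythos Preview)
Your proof is correct and follows essentially the same approach as the paper: slice in the $\xi^1$ variable and bound the area of each planar annulus by $4\pi\rho\varepsilon$ via the difference-of-squares identity. The paper splits into the cases $b\le\rho-\varepsilon$ and $\rho-\varepsilon<b\le\rho+\varepsilon$ (annulus versus disk), whereas you handle both at once by observing that the slice area is uniformly bounded by $\pi\bigl[(\rho+\varepsilon)^2-(\rho-\varepsilon)^2\bigr]$; this is a harmless streamlining of the same argument.
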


\begin{proof} Without loss of generality assume $0 \le a < b \le \rho+\varepsilon$. We split into the cases (i) $b \le \rho-\varepsilon$ and (ii) $\rho-\varepsilon < b \le \rho+\varepsilon$. In case (i) we calculate the volume as
$$
  \int_a^b \pi \bigl( (\rho^*)^2 - (\rho_*)^2 \bigr) \, d\xi^1 = 4\pi\rho\varepsilon(b-a),
$$
where $\rho^* = \bigl((\rho+\varepsilon)^2-(\xi^1)^2)^{1/2}$ and $\rho_* = \bigl((\rho-\varepsilon)^2-(\xi^1)^2\bigr)^{1/2}$.

Next, assume (ii). Then we can set $a=\rho-\varepsilon$, since the interval $a \le \xi^1 \le \rho-\varepsilon$ is covered by case (i). Therefore, the volume is
$$
  \int_a^b \pi (\rho^*)^2 \, d\xi^1 \le \int_a^b \pi \bigl( (\rho+\varepsilon)^2 - (\rho-\varepsilon)^2 \bigr) \, d\xi^1 = 4\pi \rho\varepsilon (b-a).
$$
\end{proof}

\subsection{Proof of Lemma \ref{I:Lemma1}}

First observe that whenever \eqref{I:16} holds, then so does \eqref{I:10}, since $\xi_0=\xi_1+\xi_2$, hence $1 \lesssim \max(\abs{\xi_1},\abs{\xi_2}) / \abs{\xi_0}$ by the triangle inequality.

It suffices to consider $(\pm_1,\pm_2) = (+,+)$ and $(+,-)$.

If $(\pm_1,\pm_2) = (+,+)$, we use \eqref{A:138} to write
\begin{equation}\label{I:20}
  \hypwt_0-\hypwt_1-\hypwt_2
  =
  ( - \tau_0 \pm_0 \abs{\xi_0} )
  - ( - \tau_1 + \abs{\xi_1} )
  - ( - \tau_2 + \abs{\xi_2} )
  = \pm_0\abs{\xi_0} - \abs{\xi_1} - \abs{\xi_2}.
\end{equation}
The absolute value of the right side is bounded below by $\abs{\xi_1} + \abs{\xi_2} - \abs{\xi_0}$, so \eqref{I:2} gives \eqref{I:10}. Moreover, if $\abs{\xi_0} \ll \abs{\xi_1} \sim \abs{\xi_2}$, then \eqref{B:4} shows that $\theta_{12} \sim 1$.

If $(\pm_1,\pm_2) = (+,-)$, we write
\begin{equation}\label{I:22}
  \hypwt_0-\hypwt_1-\hypwt_2
  = ( - \tau_0 \pm_0 \abs{\xi_0} )
  - ( - \tau_1 + \abs{\xi_1} )
  - ( - \tau_2 - \abs{\xi_2} )
  = \pm_0\abs{\xi_0} - \abs{\xi_1} + \abs{\xi_2}.
\end{equation}
The absolute value of the right side is bounded below by $\abs{\xi_0} - \bigabs{\abs{\xi_1} - \abs{\xi_2}}$, so \eqref{I:16} follows from \eqref{I:4}.

At this point, we have proved \eqref{I:10}, we have proved that \eqref{I:12} implies $\theta_{12} \sim 1$, and we have proved that \eqref{I:16} holds when $\pm_1 \neq \pm_2$. Now observe that if \eqref{I:12} does not hold, then either $\pm_1 \neq \pm_2$, in which case we already know that \eqref{I:16} holds, or $\abs{\xi_0} \sim \max(\abs{\xi_1},\abs{\xi_2})$, in which case \eqref{I:16} follows from \eqref{I:10}.

It remains to prove \eqref{G:12} under the assumptions \eqref{G:8}, \eqref{G:10}. If $(\pm_1,\pm_2) = (+,+)$, then by \eqref{I:20} and \eqref{G:8},
$$
  \bigabs{-\tau_0\pm_0\abs{\xi_0}}
  = \bigabs{\abs{\tau_0} - \abs{\xi_0}} \sim
  \abs{\xi_1}+\abs{\xi_2} - (\pm_0 \abs{\xi_0}),
$$
but the sign $\pm_0$ was chosen so that left hand side is minimal, and from the form of the right hand side we then see that necessarily $\pm_0 = +$, so \eqref{G:12} follows from \eqref{I:2}. The proof in the case $(\pm_1,\pm_2) = (+,-)$ is quite similar; we omit the details.

This completes the proof of the lemma.

\subsection{Proof of Lemma \ref{K:Lemma1}}

The boundary of $D$ is a circle of radius $\sin\theta$ in a plane which makes an angle $\pi/2-\beta$ with the $(\xi^1,\xi^2)$-plane, hence it projects onto an ellipse with semiaxes $a=\sin\theta$ and $b=\sin\theta\sin\beta$, in the $\xi^2$- and $\xi^1$-directions respectively. The center of the ellipse is on the $\xi^1$-axis, with coordinate $\xi^1 = \cos\beta\cos\theta$. The points of intersection between the ellipse and the line $\xi^2=x$, for a given $0 \le x \le \sin\theta$, are therefore given by $\xi^1 = \cos\beta\cos\theta \pm y$, where $y > 0$ is the solution of $x^2/a^2 + y^2/b^2 = 1$, hence $y$ is as in the lemma. This proves part \eqref{K:Lemma1a}.

Now assume $\beta < \theta \le \pi/2$, so $D$ dips below $\mathbb S^2_+$. By trigonometry in the $(\xi^1,\xi^3)$-plane, the disk intersects the boundary of $\mathbb S^2_+$ at $\xi^1$-coordinate $\cos\theta/\cos\beta$, and since the points of intersection lie on the unit circle in the $(\xi^1,\xi^2)$-plane, we get the $\xi^2$-coordinate $x$ as in part \eqref{K:Lemma1b} of the lemma. The rest of the proof then goes as in part \eqref{K:Lemma1a}, but with the crucial difference that the upper bound on $\xi^1$ that we had in $R$ is not present in $R'$, since that upper bound corresponds to a part of the disc which is now already below the equator.

\bibliographystyle{amsplain} 
\bibliography{bibliography}

\end{document}